\newlength{\defbaselineskip}
\newcommand{\setlinespacing}[1]%
           {\setlength{\baselineskip}{#1 \defbaselineskip}}
\theoremstyle{plain}
\newtheorem{thm}{Theorem}[section]
\newtheorem{cor}[thm]{Corollary}
\newtheorem{lem}[thm]{Lemma}
\newtheorem{prop}[thm]{Proposition}
\newtheorem{exam}[thm]{Example}
\newtheorem{rem}[thm]{Remark}
\newtheorem{Qes}[thm]{Question}
\makeatletter\@addtoreset{equation}{section} \makeatother
\begin{document}

\title {The Periodic Dilation Completeness Problem: Cyclic vectors
in the Hardy space over the infinite-dimensional polydisk
}
\author{
Hui Dan \quad Kunyu Guo}
\date{}
 \maketitle \noindent\textbf{Abstract:}  The classical completeness problem raised by Beurling and independently by Wintner asks for which $\psi\in L^2(0,1)$, the  dilation system $\{\psi(kx):k=1,2,\cdots\}$ is complete in $L^2(0,1)$, where $\psi$ is identified with its extension to an odd $2$-periodic function  on $\mathbb{R}$. This difficult problem is nowadays commonly called as the Periodic Dilation Completeness Problem (PDCP).
By Beurling's idea and an application of the Bohr transform, the  PDCP is translated  as  an equivalent problem  of characterizing cyclic vectors in the
Hardy space $\mathbf{H}_\infty^2$ over the infinite-dimensional polydisk for coordinate multiplication operators.
In this paper, we obtain lots of  new results on cyclic vectors
in the Hardy space $\mathbf{H}_\infty^2$.  In almost all  interesting cases, we obtain sufficient and necessary criterions for characterizing cyclic vectors, and hence in these cases we completely solve the PDCP.  Our results cover almost all previous known results on this subject.

\vskip 0.1in \noindent \emph{Keywords:}
PDCP; cyclic vector; Hardy space; infinitely many variables; Riemann hypothesis.

\vskip 0.1in \noindent\emph{2010 AMS Subject Classification:}
42C30; 47A16; 46E50; 46E22; 42B30.

\maketitle

\tableofcontents

\section{Introduction}
In his 1945 seminar, Beurling raised a question on the completeness
of  dilation systems in the Lebesgue space $L^2(0,1)$ (see \cite{Beu} for the note from this seminar). To be precise, extend a function $\psi\in L^2(0,1)$ to an odd periodic function of period 2 defined on the line $\mathbb{R}$. Beurling considered the integer dilation system $\{\psi(kx):k=1,2,\cdots\}$ and sought for conditions to make
$\mathrm{span}\{\psi(kx):k=1,2,\cdots\}$  dense in $L^2(0,1)$. In this case
the system  is referred to as  being \textit{complete}. To study this periodic dilation completeness problem (PDCP for short), Beurling associated the Fourier expansion of $\psi$  with a Dirichlet series and further a  power series in infinitely many variables.
The editors of Beurling's collected works \cite{Beu}  remarked, at the end of the note,  that this problem is actually  an infinite-dimensional version of the invariant subspace problem.

Independently, Wintner considered the PDCP  in 1944 \cite{Win}. He pointed out that the completeness of dilation systems  in $L^2(0,1)$  has
an arithmetical significance due to its connections with various problems in analytic number theory. An important example is the complete sequence
$$1,\ \left\{\frac{x}{2}\right\},\ \left\{2\frac{x}{2}\right\}, \left\{3\frac{x}{2}\right\}, \cdots, \left\{k\frac{x}{2}\right\},\cdots$$
in $L^2(0,1)$, where $\left\{x\right\}$ denotes the fraction part of $x$, the fundamental function of Diophantine approximation theory. This sequence can be interpreted as an
analytic version of the sieve of Eratosthenes.
Also, a series of  deep results on the PDCP can be found in \cite{BM,Bou1,Bou2,Har,HW,Koz1,Koz2,Koz3} around the 1940s,
 some of which involve methods from  Dirichlet series.

 It is  worth mentioning that Nyman showed the equivalence between
 the Riemann hypothesis and completeness of the dilation system $\{\varphi(tx):t>1\}$ in $L^2(0,1)$ in his thesis \cite{Ny}, where $\varphi(x)=\left\{\frac{1}{x}\right\}$. In 2003, B\'{a}ez-Duarte further proved that the Riemann hypothesis holds if and only if the  constant $1$ belongs to the closed linear span of the integer dilation system $\{\varphi(kx):k=1,2,\cdots\}$ in $L^2(0,1)$ (see \cite{BD} and \cite{Ni3}). That is to say, the study of the Riemann hypothesis can be reduced to that of approximation property of this system. Recently, Noor gave the Hardy space $H^2(\mathbb{D})$ version of the B\'{a}ez-Duarte criterion \cite{No}. He constructed a semigroup $\{W_n\}_{n\in\mathbb{N}}$ of weighted composition operators  on $H^2(\mathbb{D})$, which has a connection with power dilation operators $T_n$ by the equality $(I-S)W_n=T_n(I-S)$, and showed the equivalence of the Riemann hypothesis and the existence of  cyclic vectors for $\{W_n\}_{n\in\mathbb{N}}$ in the closure of  the subspace $\mathcal{N}$ constructed in \cite{No}. As shown in \cite{No},  cyclic vectors for $\{W_n\}_{n\in\mathbb{N}}$ have a natural link with PDCP functions. Then all these mentioned above motivate us to focus on the PDCP.

By applying Beurling's idea and  the Bohr transform, the  PDCP is translated  as  an equivalent problem  of characterizing cyclic vectors in the
Hardy space $\mathbf{H}_\infty^2$ over the infinite-dimensional polydisk for coordinate multiplication operators (see Subsection 2.1 for the details). Also as suggested by  Helson, considering power series in
infinitely many variables may be more natural, corresponding to Dirichlet series \cite{Hel}. To be precise, we give a brief statement for some basic notations of   the
Hardy space $\mathbf{H}_\infty^2$.

Let $\mathbb{Z}_+^{(\infty)}$ be the set of all finitely supported sequences of non-negative integers, that is, $\mathbb{Z}_+^{(\infty)}=\bigcup_{n=1}^\infty\mathbb{Z}_+^n$. Then for each $\alpha=(\alpha_1,\alpha_2,\cdots)\in\mathbb{Z}_+^{(\infty)}$, there is a positive integer $N$ such that $\alpha_n=0$ whenever $n>N$. For such $\alpha$, write $\zeta^\alpha$ for the monomial $\zeta_1^{\alpha_1}\cdots\zeta_N^{\alpha_N}$.
The Hardy space $\mathbf{H}_\infty^2$ is defined
to be the Hilbert space consisting of formal power series
$F(\zeta)=\sum_{\alpha\in  \mathbb{Z}_+^{(\infty)}} c_\alpha \zeta^\alpha $
 satisfying
$$\|F\|^2= \sum_{\alpha\in  \mathbb{Z}_+^{(\infty)}} |c_\alpha|^2 <\infty.$$
Set
$$\mathbb{D}_2^\infty=\{\zeta=(\zeta_1,\zeta_2,\cdots)\in l^2:|\zeta_n|<1\  \mathrm{for}\ \mathrm{all}\ n\geq1\}.$$
Then $\mathbb{D}_2^\infty$ is a domain (a  connected open set) in the Hilbert space $l^2$, the space of all square-summable sequences.
By the Cauchy-Schwarz inequality, every series in $\mathbf{H}_\infty^2$ converges absolutely in $\mathbb{D}_2^\infty$ and hence  is a
holomorphic function  on $\mathbb{D}_2^\infty$ (we refer the readers to \cite{Di} for the  definition of holomorphic functions on a domain in some Banach space).
A function $F\in \mathbf{H}_\infty^2$ is said to be \textit{cyclic} if
$[F]=\mathbf{H}_\infty^2$, where
$[F]$ denotes the joint invariant subspace generated by $F$ for the coordinate multiplication operators $M_{\zeta_1}$, $M_{\zeta_2}$, $\cdots$.


As indicated in \cite{HLS},  describing cyclic vectors in  $\mathbf{H}_\infty^2$ is extremely difficult. It is generally known that
cyclic vectors in the  Hardy space $H^2(\mathbb{D}^n)$ over the polydisk $\mathbb{D}^n$ are completely characterized only when $n=1$, and that  the space $H^2(\mathbb{D}^n)$ can be identified with the closed subspace of $\mathbf{H}_\infty^2$ consisting of functions which only depend on the
variables $\zeta_1,\cdots,\zeta_n$. Every cyclic vector in the space $\mathbf{H}_\infty^2$ has necessarily no zeros in $\mathbb{D}_2^\infty$ \cite{Ni1} (indeed this fact also be mentioned  in \cite{Beu} without passing to the space $\mathbf{H}_\infty^2$).
So the PDCP is to find conditions under which functions in $\mathbf{H}_\infty^2$ without zeros are cyclic.
It was shown in \cite{NGN}and independently in \cite{Koz1} that the above necessary condition is also sufficient for  polynomials.
Nikolski generalized this result to the class of functions depending on finitely many
variables $\zeta_1,\cdots,\zeta_n\ (n\in\mathbb{N})$ and holomorphic in some neighborhood of $\overline{\mathbb{D}^n}$ for which  the proof   heavily involves  a growth estimate and a stratification theorem for real analytic manifolds, and is  much more  complex than the polynomials case \cite{Ni1}.

There are some other characterizations  in terms of the Fourier  coefficients of functions in $\mathbf{H}_\infty^2$.
It is well-known that each positive integer $n$ has a unique prime factorization
$n=p_1^{\alpha_1}p_2^{\alpha_2}\cdots p_m^{\alpha_m}$, where $p_j$ is the $j$-th prime number. Thus, one can define a semigroup isomorphism $\alpha$ from  $(\mathbb{N}, \times)$ of  all positive integers onto $(\mathbb{Z}_+^{(\infty)},+)$ by putting $\alpha(n)=(\alpha_1,\alpha_2,\cdots,\alpha_m,0,\cdots).$
So functions in $\mathbf{H}_\infty^2$ can be uniquely expressed  as $\sum_{n=1}^{\infty}a_n\zeta^{\alpha(n)}$ with $\{a_n\}_{n\in\mathbb{N}}$ square-summable.
If a nonzero square-summable sequence $\{a_n\}_{n\in\mathbb{N}}$ is totally multiplicative, that is, $a_{mn}=a_ma_n$ for any $m,n\in\mathbb{N}$,
then the cyclicity of  $F=\sum_{n=1}^{\infty}a_n\zeta^{\alpha(n)}$  was proved in\cite{Har}, and independently stated  in \cite{Koz2}. Some  enlightening proofs also were presented in \cite{HLS} and \cite{Ni1}. When  $F$ is of  multiplicative coefficients, that is, $a_{mn}=a_ma_n$ if $\mathrm{gcd}(m,n)=1$,  Hartman and  Kozlov independently of each other  gave  characterizations for the cyclicity of  $F$  in \cite{Har} and \cite{Koz2}. Using different methods  in \cite{HLS} and \cite{Ni1}, it was shown that  $F=1+\sum_{n=1}^{\infty}a_n\zeta_n\in \mathbf{H}_\infty^2$ is cyclic if and only if $\sum_{n=1}^{\infty}|a_n|\leq1$.

From all previous known results, it seems impossible to give  a universal  criterion for cyclicity in the Hardy space $\mathbf{H}_\infty^2$ as in the Hardy space $H^2(\mathbb{D})$. Hence one is led to classes of  some interesting functions to obtain  criterions  for cyclcity.  In this paper, we will give lots of new  results on cyclic vectors
in the Hardy space $\mathbf{H}_\infty^2$. In almost all interesting cases, we obtain sufficient and necessary criterions for characterizing cyclic vectors, and hence in these cases we  completely solve the PDCP.

Section 2 is dedicated to an introduction to the equivalence  between the PDCP and the problem for characterizing cyclic vectors in $\mathbf{H}_\infty^2$, and some basic function theory for Hardy spaces $\mathbf{H}_\infty^p\ (p>0)$.

The  infinite polydisk algebra $\mathbf{A}_{R,\infty}\ (R>1)$  is introduced  in Section 3.
 For functions in  this  polydisk algebra, we  completely solve the problem of characterizing cyclic vectors. This generalizes   Nikolski's result in finitely many variables \cite{Ni1}.
An immediate  corollary is  that if  the coefficients of $F=\sum_{n=1}^{\infty}a_n\zeta^{\alpha(n)}$ satisfies   one of the following conditions:
1) $\sum_{n=1}^\infty |a_n| n^\varepsilon<\infty$  (for some $\varepsilon>0)$, or 2) $a_n=\hat{f}(n)\ (n\in\mathbb{N})$, the Fourier coefficients of  a holomorphic function $f$ on some neighborhood of the closed unit disk, then $F$ is cyclic if and only if $F$ has no zeros in $\mathbb{D}_2^\infty$.

Section 4 provides a complete characterization for the cyclicity of an  infinite product of functions in $\mathbf{H}_\infty^2$ with mutually independent variables. This result enables us to obtain a criterion for the cyclicity of those functions whose coefficients  have some sort of   multiplicative property. This generalizes  the main result in  \cite{Har} to a more general situation.  In particular, if $F(\zeta)=\sum_{n=1}^\infty (-1)^n a_n\zeta^{\alpha(n)}$ satisfies  $a_{mn}=a_ma_n$ for all $m,n\in\mathbb{N}$, then $F(\zeta)$ is cyclic if and only if  $|a_2|\leq 1/2.$

In Section 5,
we study the cyclicity  from a view of composition operators on $\mathbf{H}_\infty^2$. It is shown  that if $\{\eta_n\}_{n\in\mathbb{N}}$ is a sequence of nonconstant inner functions on $\mathbb{D}_2^\infty$ with mutually independent variables and $\sum_{n=1}^{\infty}|\eta_n(0)|<\infty$, then the composition operator $F\mapsto F(\eta_1,\eta_2,\cdots)$ on $\mathbf{H}_\infty^2$ preserves the cyclicity in the sense that $F(\eta_1,\eta_2,\cdots)$ is cyclic if and only if $F$ is cyclic. For instance, suppose that $\{\eta_n\}_{n\in\mathbb{N}}$ is as above, then $1+\sum_{n=1}^{\infty}a_n\eta_n$ is cyclic if and only if $1+\sum_{n=1}^{\infty}a_n\zeta_n$ is cyclic, if and only if $\sum_{n=1}^{\infty}|a_n|\leq1$.

In Section 6, we proved that if $F\in \mathbf{H}_\infty^p\ (p>2)$ and there exists $G\in \mathbf{H}_\infty^q\ (q>0)$, such that $FG$ is a cyclic vector in $\mathbf{H}_\infty^2$, then $F$ is also cyclic. This generalizes \cite[Theorem 5.7]{HLS} and \cite[Theorem 3.3 (3)]{Ni1}.  As a direct application, it is shown that for each fixed $m\geq 2$, the linear span of the power dilation system $\{\varphi_m(z), \varphi_m(z^2),\cdots\}$ is dense in $H^2(\mathbb{D})$, where $\varphi_m(z)=\log({1-z^m})-\log({1-z})-\log m$.  This is  a stronger version of a central
result in  \cite[Theorem 9]{No}, and hence implies that a weak version of  the Riemann hypothesis  is true.

In section 7, we  use a Riemann surface approach to reveal the relationship between cyclicity of a function $F\in \mathbf{H}_\infty^2$  and the geometry of its image.  It is shown that if  there exists  a simple curve $\gamma$ starting from the origin and tending to the infinity, so that $\gamma$ does not intersect with the image  of $F$, then $F$ is cyclic. In particular, if  the image   of a  function $F$ does not intersect with a  half-straight line starting from  the origin, then $F$ is cyclic. As a simple application of this result, it is shown that if $F\in \mathbf{H}_\infty^2$ is  a nonzero  quasi-homogeneous  function, and $a\in \mathbb{C}$, then  $a+F$ is cyclic if and only if $a+F$ has  not zero point on $\mathbb{D}_2^\infty$ if and only if  $F\in \mathbf{H}^\infty$ and $\|F\|_\infty\leq |a|.$

In the last section, we discuss the   Kozlov completeness problem (see \cite{Koz3, Ni2,Ni3, Ni4}).

\section{Preliminaries}

\subsection{From the PDCP to the cyclicity}

The PDCP  appears in different forms on different spaces, which are  linked together by various unitary  maps. These spaces and maps offer  strategic flexibility for investigating   this problem. Let us explain in detail.

Hedenmalm, Lindqvist and Seip
introduced the Hilbert space of Dirichlet series \cite{HLS} $$\mathcal{H}^2=\{f=\sum_{n=1}^\infty a_nn^{-s}:\|f\|^2=\sum_{n=1}^\infty|a_n|^2<\infty\},$$
and 
studied the PDCP via a unitary operator
\begin{eqnarray*}U:\quad L^2(0,1)& \rightarrow &
  \mathcal{H}^2 ,\\
  \sum_{n=1}^\infty a_n\psi_n& \mapsto & \sum_{n=1}^\infty a_nn^{-s},\end{eqnarray*}
here $\psi_n(x)=\sqrt{2}\sin n\pi x$, and $\{\psi_n\}_{n\in\mathbb{N}}$ is a canonical orthonormal basis for $L^2(0,1)$. They proved that
$\{\psi(kx)\}_{k\in\mathbb{N}}$ is complete in $L^2(0,1)$ if and only if $U\psi$ is
the cyclic vectors in $\mathcal{H}^2$ for  the multiplier algebra of $\mathcal{H}^2$.


The connection between
$\mathcal{H}^2$ and the Hardy space $\mathbf{H}_\infty^2$ was also investigated in their paper.
The unitary transform
\begin{eqnarray*}\mathbf{B}:\mathcal{H}^2& \rightarrow &
  \mathbf{H}_\infty^2 ,\\
  \sum_{n=1}^\infty a_nn^{-s}& \mapsto &\sum_{n=1}^\infty a_n\zeta^{\alpha(n)}\end{eqnarray*}
 is derived from Bohr's brilliant observation \cite{Bo}: a Dirichlet series can be transformed into
 a power series  in infinitely many variables via the``variable" substitution
 $$\zeta_1=p_1^{-s},\zeta_2=p_2^{-s},\cdots,$$
 where  $p_j$ is the $j$-th prime number.
A simple verifying shows that  $\mathbf{B}$ gives a
bijective correspondence between multiplier invariant subspaces of
$\mathcal{H}^2$ and invariant subspaces of $\mathbf{H}_\infty^2$.
In particular, a Dirichlet series $f\in\mathcal{H}^2$ is a cyclic vector in $\mathcal{H}^2$
for the multiplier algebra if and only if $\mathbf{B}f$ is cyclic in $\mathbf{H}_\infty^2$. This builds a connection between the PDCP and cyclicity in $\mathbf{H}_\infty^2$, and thus gives a reasonable explanation to the
remark for Beurling's note \cite{Beu}.

Nikolski exhibited several sufficient conditions for
the PDCP in \cite{Ni1}. All these  were  presented  by using  the terminology of the classic Hardy space $H^2(\mathbb{D})$ over the unit disk $\mathbb{D}$.
Using a unitary operator $V:H_0^2\rightarrow L^2(0,1)$ defined as $Vz^n=\sqrt{2}\sin n\pi x$ $(n\in\mathbb{N})$, he showed that the PDCP is equivalent to characterizing the completeness of power dilation systems in $H_0^2$, here $$H_0^2=\{f\in H^2(\mathbb{D}):\hat{f}(0)=f(0)=0\}.$$
 The Bohr transform $\mathcal{B}$ from  $H_0^2$ onto  $\mathbf{H}_\infty^2$ is defined by putting
$$\mathcal{B}f(\zeta)=\sum_{n=1}^\infty \hat{f}(n)\zeta^{\alpha(n)},\quad f\in H_0^2,\, \zeta\in\mathbb{D}_2^\infty.$$
Then $\mathcal{B}$ is a unitary operator from $H_0^2$ onto $\mathbf{H}_\infty^2$,
and intertwines each power dilation operator $T_k:f\mapsto f(z^k)$ with the multiplication operator $M_{\zeta^{\alpha(k)}}$ on $\mathbf{H}_\infty^2$ in the sense that
for $f\in H_0^2$,
$$
\mathcal{B}T_k f=M_{\zeta^{\alpha(k)}}\mathcal{B} f
,\quad k=1, 2, \cdots.
$$
This gives the correspondence between the completeness of power dilation systems of a function $f$ in $H_0^2$ and cyclicity  of $\mathcal{B} f$ in $\mathbf{H}_\infty^2$; that is,  $\{f(z^k)\}_{k\in\mathbb{N}}$ is complete in $H_0^2$ if and only if $\mathcal{B}f$ is cyclic in $\mathbf{H}_\infty^2$.

We summarize the above statements as a proposition.
\begin{prop}[\cite{HLS, Ni1}] The following conclusions hold.

\begin{itemize}
  \item[(1)]If $\psi\in L^2(0,1)$, then $\{\psi(kt)\}_{k\in\mathbb{N}}$ is complete in $L^2(0,1)$ if and only if $(\mathbf{B}U)\psi$ is cyclic in $\mathbf{H}_\infty^2$.
 \item[(2)]If $f\in H_0^2$, then $\{f(z^k)\}_{k\in\mathbb{N}}$ is complete in $H_0^2$ if and only if $\mathcal{B}f$ is cyclic in $\mathbf{H}_\infty^2$ .
\end{itemize}
\end{prop}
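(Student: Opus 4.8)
The plan is to prove both equivalences by transporting the spanning statements along the chain of unitaries assembled in this subsection, the only genuinely new points being the two intertwining identities and the observation that the closed span of the \emph{monomial} (respectively prime-power) dilates already exhausts the full joint invariant subspace. I shall treat part~(2) first, since it is the most transparent, and then reduce part~(1) to the cited theorem of Hedenmalm--Lindqvist--Seip together with the cyclicity-preserving property of $\mathbf{B}$.

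For part~(2), fix $f\in H_0^2$ and set $F=\mathcal{B}f$. The heart of the matter is the identity
\[
[F]=\overline{\mathrm{span}}\{\zeta^{\alpha(k)}F:k\in\mathbb{N}\}.
\]
The inclusion $\supseteq$ is clear because each $\zeta^{\alpha(k)}F=M_{\zeta^{\alpha(k)}}F$ lies in $[F]$. For $\subseteq$ I would check that the right-hand side is itself invariant under every coordinate multiplication: since $\alpha$ is a semigroup isomorphism with $\zeta^{\alpha(p_j)}=\zeta_j$, one has $M_{\zeta_j}\big(\zeta^{\alpha(k)}F\big)=\zeta^{\alpha(p_j k)}F$, so the span is carried into itself; as it also contains $F=\zeta^{\alpha(1)}F$, it contains the smallest such invariant subspace, namely $[F]$. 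This is precisely the step where the prime-factorization isomorphism does the work: multiplying by $\zeta_j$ corresponds to multiplying the integer index by the prime $p_j$, never leaving the indexing set $\{\alpha(k):k\in\mathbb{N}\}$, which ranges over all of $\mathbb{Z}_+^{(\infty)}$. Now invoke the intertwining relation $\mathcal{B}T_kf=M_{\zeta^{\alpha(k)}}\mathcal{B}f$ recorded above, so that $\zeta^{\alpha(k)}F=\mathcal{B}\big(f(z^k)\big)$; since $\mathcal{B}$ is unitary it carries closures of linear spans to closures of linear spans, whence $[F]=\mathcal{B}\big(\overline{\mathrm{span}}\{f(z^k):k\in\mathbb{N}\}\big)$. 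Unitarity also gives $[F]=\mathbf{H}_\infty^2$ exactly when $\overline{\mathrm{span}}\{f(z^k):k\in\mathbb{N}\}=H_0^2$, i.e.\ $\mathcal{B}f$ is cyclic if and only if $\{f(z^k)\}_{k\in\mathbb{N}}$ is complete in $H_0^2$.

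For part~(1) I would follow the same template through two unitaries. First, writing $D_k\psi=\psi(kx)$ and using $\psi_n(kx)=\psi_{nk}$ for the sine basis (a consequence of the odd $2$-periodic extension), one sees that $D_k$ is an isometry of $L^2(0,1)$ and that $UD_k=M_{k^{-s}}U$, because $U\psi_{nk}=(nk)^{-s}=k^{-s}n^{-s}$. Hence completeness of $\{\psi(kx)\}_{k\in\mathbb{N}}$ is equivalent to $\overline{\mathrm{span}}\{k^{-s}(U\psi):k\in\mathbb{N}\}=\mathcal{H}^2$, which by the theorem of \cite{HLS} quoted above is exactly the cyclicity of $U\psi$ in $\mathcal{H}^2$ for the multiplier algebra. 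Finally, since $\mathbf{B}$ carries multiplier-invariant subspaces of $\mathcal{H}^2$ bijectively onto invariant subspaces of $\mathbf{H}_\infty^2$ and in particular preserves cyclicity, $U\psi$ is cyclic in $\mathcal{H}^2$ if and only if $(\mathbf{B}U)\psi$ is cyclic in $\mathbf{H}_\infty^2$, which completes the chain.

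I expect the only delicate point to be justifying that the closed span of the monomial (respectively prime-power) dilates really is the full invariant, or multiplier-cyclic, subspace rather than something smaller. In the $\mathbf{H}_\infty^2$ picture this is clean, resting on the semigroup isomorphism $\alpha$ as above; in the $\mathcal{H}^2$ picture it is subsumed in the cited Hedenmalm--Lindqvist--Seip equivalence, and one should be careful to invoke their statement rather than reprove from scratch the density of Dirichlet polynomials in the multiplier algebra.
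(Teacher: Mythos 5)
Your proposal is correct and follows essentially the same route as the paper, which simply assembles the preceding discussion: the unitary $U$ and the cited Hedenmalm--Lindqvist--Seip equivalence for part (1), the Bohr transforms $\mathbf{B}$ and $\mathcal{B}$ with their intertwining relations, and the observation that $\{\alpha(k):k\in\mathbb{N}\}=\mathbb{Z}_+^{(\infty)}$ so that $[F]=\overline{\mathrm{span}}\{\zeta^{\alpha(k)}F:k\in\mathbb{N}\}$. Your extra care about the span identity and about citing rather than reproving the multiplier-algebra step is exactly the right emphasis.
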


\subsection{Some basic $\mathbf{H}_\infty^p$-function theory}

In this subsection, we  exhibit some basic properties and results on functions in Hardy spaces over $\mathbb{D}_2^\infty$, which will be used later.

Put $$F_{(n)}(\zeta)=F(\zeta_1,\cdots,\zeta_n,0,0,\cdots)
$$
for $F$ holomorphic on $\mathbb{D}_2^\infty$ and $n\in\mathbb{N}$. One can naturally
identify $F_{(n)}$ with a holomorphic function on $\mathbb{D}^n$. The following lemma states a  relation between $F\in \mathbf{H}_\infty^2$ and its restrictions $F_{(n)}$ on $\mathbb{D}^n$ for $n\in \mathbb{N}$.

\begin{lem}\cite{Ni1}\label{l4} If $F$ is holomorphic on $\mathbb{D}_2^\infty$, then $F\in \mathbf{H}_\infty^2$ if and
only if $F_{(n)}\in H^2(\mathbb{D}^n)$ for each $n\in\mathbb{N}$ and $\sup\limits_{n\in\mathbb{N}}\|F_{(n)}\|_{H^2(\mathbb{D}^n)}<\infty$. In this case, $F_{(n)}$ converges to $F$ $(n\rightarrow\infty)$ in the norm of $\mathbf{H}_\infty^2$, and
$$\|F\|_{\mathbf{H}_\infty^2}=\sup\limits_{n\in\mathbb{N}}\|F_{(n)}\|_{H^2(\mathbb{D}^n)}.$$
\end{lem}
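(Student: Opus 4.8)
The plan is to reduce everything to a bookkeeping statement about the monomial (Taylor) coefficients of $F$ at the origin, after which the equivalence, the norm identity, and the convergence all follow from a single monotone-convergence observation for nonnegative series. The two orthonormal bases in play are the monomials $\{\zeta^\alpha\}$: in $\mathbf{H}_\infty^2$ they are orthonormal by the very definition of the norm, and in $H^2(\mathbb{D}^n)$ they are orthonormal by the classical Taylor-coefficient description of the polydisk Hardy space. Matching these two pictures is the whole content of the lemma.

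First I would invoke the theory of holomorphic functions on domains in Banach spaces (as in \cite{Di}) to expand $F$ at the origin into its monomial series $F(\zeta)=\sum_{\alpha\in\mathbb{Z}_+^{(\infty)}}c_\alpha\zeta^\alpha$, the coefficients $c_\alpha$ being uniquely determined. Restricting to the first $n$ variables kills exactly those monomials involving some $\zeta_j$ with $j>n$, so that $F_{(n)}$ is the holomorphic function on $\mathbb{D}^n$ whose Taylor expansion is $\sum_{\alpha\in A_n}c_\alpha\zeta^\alpha$, where $A_n=\{\alpha\in\mathbb{Z}_+^{(\infty)}:\mathrm{supp}\,\alpha\subseteq\{1,\dots,n\}\}$. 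The key combinatorial fact to record here is that the $A_n$ form an increasing exhaustion of $\mathbb{Z}_+^{(\infty)}$, since every multi-index has finite support.

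Next I would apply the characterization of $H^2(\mathbb{D}^n)$ by square-summability of Taylor coefficients: for each $n$ one has $\|F_{(n)}\|_{H^2(\mathbb{D}^n)}^2=\sum_{\alpha\in A_n}|c_\alpha|^2$, with the understanding that $F_{(n)}\in H^2(\mathbb{D}^n)$ precisely when this sum is finite. Because $A_n\uparrow\mathbb{Z}_+^{(\infty)}$, the partial sums $\sum_{\alpha\in A_n}|c_\alpha|^2$ are nondecreasing in $n$ and converge to $\sum_{\alpha}|c_\alpha|^2$ (both sides possibly $+\infty$), so $\sup_n\|F_{(n)}\|_{H^2(\mathbb{D}^n)}^2=\sum_\alpha|c_\alpha|^2$. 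Finiteness of the left-hand side is therefore equivalent to $F\in\mathbf{H}_\infty^2$, and in that case the displayed norm identity is immediate; for the convergence one simply notes that $\|F-F_{(n)}\|_{\mathbf{H}_\infty^2}^2=\sum_{\alpha\notin A_n}|c_\alpha|^2$ is the tail of a convergent series and hence tends to $0$.

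For the reverse implication I must be a little more careful, since there I start only from the data of a holomorphic $F$ with each $F_{(n)}\in H^2(\mathbb{D}^n)$ and a uniform bound, and I need to manufacture a single coherent family of coefficients $c_\alpha$. The point to verify, which I expect to be the main obstacle, is the consistency of the finite-variable Taylor coefficients: the coefficient of $\zeta^\alpha$ read off from $F_{(m)}$ must agree with the one read off from $F_{(n)}$ whenever $m\geq n$ and $\alpha\in A_n$. This follows from the identity $F_{(n)}(\zeta_1,\dots,\zeta_n)=F_{(m)}(\zeta_1,\dots,\zeta_n,0,\dots,0)$ together with uniqueness of Taylor expansions, both of which rest on the infinite-dimensional holomorphy machinery of \cite{Di}. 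Once this compatibility is established the $c_\alpha$ are globally well defined, and the monotone-convergence argument above applies verbatim to yield $\sum_\alpha|c_\alpha|^2=\sup_n\|F_{(n)}\|_{H^2(\mathbb{D}^n)}^2<\infty$, whence $F\in\mathbf{H}_\infty^2$. Thus the only genuinely nonelementary ingredient is the interface with the theory of \cite{Di}; everything downstream is a tidy limit of nonnegative sums.
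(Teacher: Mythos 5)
Your argument is correct, and it is essentially the standard coefficient-bookkeeping proof for the Hilbert-space case. Note, however, that the paper does not prove this lemma at all: it is quoted from \cite{Ni1}, and the closest thing to a proof in the text is Proposition 2.3, which establishes the analogous statement for all $\mathbf{H}_\infty^p$ ($0<p\le\infty$) by a much heavier real-variable argument (radial dilations $F(r\zeta_1,r^2\zeta_2,\dots)$, subharmonicity of $|F|^p$, Fubini, and a Fatou-type radial-limit theorem in the spirit of Rudin). Your route exploits exactly what is special about $p=2$ --- the monomials form an orthonormal basis of both $\mathbf{H}_\infty^2$ and $H^2(\mathbb{D}^n)$, so everything reduces to monotone convergence of the partial sums $\sum_{\alpha\in A_n}|c_\alpha|^2$ over the exhaustion $A_n\uparrow\mathbb{Z}_+^{(\infty)}$ --- and is accordingly far more elementary; the trade-off is that it does not generalize to $p\ne 2$, which is why the paper needs the harder machinery elsewhere. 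One small point you should make explicit in the converse direction: after you assemble the coefficients $c_\alpha$ and conclude they are square-summable, they define \emph{some} element $G\in\mathbf{H}_\infty^2$ with $G_{(n)}=F_{(n)}$ for all $n$, and you still need $F=G$ on all of $\mathbb{D}_2^\infty$. This follows because a holomorphic (hence continuous) function on $\mathbb{D}_2^\infty\subseteq l^2$ is determined by its values on the dense union of the finite-dimensional slices, since $(\zeta_1,\dots,\zeta_n,0,0,\dots)\to\zeta$ in $l^2$; it is covered by the \cite{Di} machinery you invoke, but it is the one place where the identification of the formal-series object with the given holomorphic function actually has to be checked.
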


Let $\overline{\mathbb{D}}^\infty$
denote the product of countably infinitely many closed unit disks:
$$\overline{\mathbb{D}}^\infty=\overline{\mathbb{D}}\times\overline{\mathbb{D}}\times\cdots.$$
It is known that $\overline{\mathbb{D}}^\infty$ is a compact
Hausdorff space with the product topology.  As usual, write $C(\overline{\mathbb{D}}^\infty)$ for the algebra of all continuous functions on $\overline{\mathbb{D}}^\infty$ with the uniform  norm. Let $\mathbf{A}_{\infty}$ denote the closed subalgebra of $C(\overline{\mathbb{D}}^\infty)$ generated by the coordinate functions $\zeta_1$, $\zeta_2$, $\cdots$. It is  routine to check that the \v{S}hilov boundary  of $\mathbf{A}_{\infty}$ is  the infinite tours
$\mathbb{T}^\infty=\mathbb{T}\times\mathbb{T}\times\cdots.$
Since $\mathbb{T}^\infty$ is a compact Hausdorff group,  it possesses a uniquely normalized  Haar measure $\rho$.  For $0<p<\infty$, define the Hardy space $H^p(\rho)$   to be
the closure of $\mathbf{A}_\infty$
in the space $L^p(\rho)$, and $H^\infty(\rho)$ to be the weak-star closure of $\mathbf{A}_\infty$ in $L^\infty(\rho)$. Then for $1\leq p\leq\infty$, $H^p(\rho)$ is a Banach space; for $0<p<1$, $H^p(\rho)$ is  a topological vector space with the  complete  translation-invariant metric $\mathrm{d}_p(F,G)=\int_{\mathbb{T}^\infty}|F-G|^p\mathrm{d}\rho$.

 For each finite  $p>0$, Cole and Gamelin \cite[Theorem 8.1, also see the discussion after Theorem 6.2]{CG} proved that
the evaluation at each  $\zeta\in\mathbb{D}_2^\infty$ is continuous on $H^p(\rho)$ in the sense that
\begin{equation}\label{|F|p}
 |F(\zeta)|^p\leq\left(\prod_{n=1}^{\infty}
\frac{1}{1-|\zeta_n|^2}\right)\int_{\mathbb{T}^\infty}|F|^p\mathrm{d}\rho,\quad F\in\mathbf{A}_\infty.
\end{equation} Therefore, each function in $H^p(\rho)$ can be uniquely  extended to a holomorphic function on the domain  $\mathbb{D}_2^\infty$ by evaluations such that the above inequality holds.  The Hardy space $\mathbf{H}_\infty^p$  over $\mathbb{D}_2^\infty$  is defined to be the space consisting of all such holomorphic functions, with the inherited metric from $H^p(\rho)$. Then $\mathbf{H}_\infty^p$ is isometrically  isomorphic to
$H^p(\rho)$, and for $p\geq 1$, the isomorphism $H^p(\rho)=\mathbf{H}_\infty^p$
is  realized by taking Poisson integrals \cite{CG}. In the case  $p=\infty$, write $\mathbf{H}^\infty$ for all bounded holomorphic functions  on  $\mathbb{D}_2^\infty$. This is a Banach algebra with  the sup-norm, and is isometrically  isomorphic to
$H^\infty(\rho)$ via the Poisson integral. In fact, as done in \cite{CG}, each function in $\mathbf{H}^\infty$ can be further  analytically extended to  the open unit ball of the sequence space $\textit{c}_0$.  Also, for convenience, we identity a function in  $\mathbf{H}_\infty^p$  with its corresponding ``boundary-value" function in $H^p(\rho)$ for all $p$.

From the above definition  for  Hardy spaces, we present the following proposition which is related to  Nikolski's definition in \cite{Ni1}.
\begin{prop}\label{definition of Hp}
Let $F$ be a holomorphic function on $\mathbb{D}_2^\infty$. Then
$F\in\mathbf{H}_\infty^p$ for some $0< p\leq\infty$ if and only if
 $F_{(n)}\in H^p(\mathbb{D}^n)$ for each $n\in\mathbb{N}$ and $\sup_{n\in\mathbb{N}}\int_{\mathbb{T}^n}|F_{(n)}|^p\mathrm{d}m_n<\infty$. In this case,
\begin{equation}\label{2.2} \int_{\mathbb{T}^\infty}|F|^p\mathrm{d}\rho
 =\sup_{n\in\mathbb{N}}\int_{\mathbb{T}^n}|F_{(n)}|^p\mathrm{d}m_n.
 \end{equation}
\end{prop}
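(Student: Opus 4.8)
The plan is to read the slicing map $F\mapsto F_{(n)}$ as a conditional expectation and to exploit that $|F|^p$ is subharmonic for \emph{every} $p>0$. Equip $\mathbb{T}^\infty$ with its Haar measure $\rho=\prod_k m$ and let $\mathcal{F}_n$ be the sub-$\sigma$-algebra generated by the first $n$ coordinates; since $\rho$ is a product measure, $\mathbb{E}[\,\cdot\mid\mathcal{F}_n]$ is precisely integration over the tail variables $\zeta_{n+1},\zeta_{n+2},\dots$, and on $\mathbf{A}_\infty$ it annihilates every monomial whose support meets $\{n+1,n+2,\dots\}$ and fixes the rest, so that $\mathbb{E}[F\mid\mathcal{F}_n]=F_{(n)}$. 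Fixing the first $n$ variables, $F_{(n)}=F_{(n+1)}(\cdot,0)$ is the value at the centre of the holomorphic function $\zeta_{n+1}\mapsto F_{(n+1)}$, so the subharmonic mean-value inequality gives $|F_{(n)}|^p\le\int_{\mathbb{T}}|F_{(n+1)}|^p\,dm(\zeta_{n+1})$; integrating over the remaining variables shows that $n\mapsto\int_{\mathbb{T}^n}|F_{(n)}|^p\,dm_n$ is nondecreasing and that $\{|F_{(n)}|^p\}_n$ is a nonnegative submartingale, for every $p>0$. Because $F_{(n)}$ depends on finitely many variables and $\rho$ is a product, $\int_{\mathbb{T}^\infty}|F_{(n)}|^p\,d\rho=\int_{\mathbb{T}^n}|F_{(n)}|^p\,dm_n$; combined with the fact that $F_{(n)}\to F$ uniformly on $\overline{\mathbb{D}}^\infty$ when $F\in\mathbf{A}_\infty$, this establishes (\ref{2.2}) on the dense subalgebra $\mathbf{A}_\infty$, the supremum being attained as a monotone limit.

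For necessity, let $F\in\mathbf{H}_\infty^p=H^p(\rho)$ with $1\le p<\infty$. The map $F\mapsto F_{(n)}=\mathbb{E}[F\mid\mathcal{F}_n]$ is an $L^p(\rho)$-contraction, so $\int_{\mathbb{T}^n}|F_{(n)}|^p\,dm_n\le\int_{\mathbb{T}^\infty}|F|^p\,d\rho$ and, approximating $F$ by polynomials and using (\ref{|F|p}) to match holomorphic extensions, each $F_{(n)}$ lies in $H^p(\mathbb{D}^n)$. Lévy's upward martingale convergence theorem gives $F_{(n)}=\mathbb{E}[F\mid\mathcal{F}_n]\to F$ in $L^p(\rho)$ (here $\mathcal{F}_\infty$ is the full $\sigma$-algebra), whence $\int_{\mathbb{T}^n}|F_{(n)}|^p\,dm_n\to\int_{\mathbb{T}^\infty}|F|^p\,d\rho$, and together with the monotonicity this yields both the finiteness of the supremum and the equality (\ref{2.2}). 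The case $p=\infty$ is separate and elementary: $\|F_{(n)}\|_\infty\le\|F\|_\infty$ by the maximum principle, while the reverse inequality and the supremum formula follow from evaluating $F$ at points of $\mathbb{D}_2^\infty$ and letting the tail tend to $0$.

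For sufficiency, assume $F$ is holomorphic on $\mathbb{D}_2^\infty$ with $F_{(n)}\in H^p(\mathbb{D}^n)$ and $M:=\sup_n\int_{\mathbb{T}^n}|F_{(n)}|^p\,dm_n<\infty$. Viewing each $F_{(n)}$ as an element of $H^p(\rho)$ that depends on finitely many variables, the mean-value property makes $\{F_{(n)}\}_n$ a martingale that is bounded in $H^p(\rho)$; I would show it converges in $H^p(\rho)$ to some $G$. Since $H^p(\rho)$ is closed, $G\in\mathbf{H}_\infty^p$, and the evaluation inequality (\ref{|F|p}) forces the holomorphic extension of $G$ to agree on $\mathbb{D}_2^\infty$ with $\lim_n F_{(n)}=F$; hence $F=G\in\mathbf{H}_\infty^p$ and $\int_{\mathbb{T}^\infty}|F|^p\,d\rho=\lim_n\int_{\mathbb{T}^n}|F_{(n)}|^p\,dm_n=M$, giving (\ref{2.2}). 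For $1<p<\infty$ the required convergence is Doob's $L^p$ martingale convergence theorem, and the $p=2$ instance already recorded in Lemma \ref{l4} can be used as a template.

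The main obstacle is the low-exponent range $0<p\le1$, where $L^p(\rho)$ carries only the metric $d_p$ and the clean Banach-space tools fail: conditional expectation is \emph{not} a $d_p$-contraction, and boundedness in $L^1$ does not by itself yield $L^1$-convergence of the martingale $\{F_{(n)}\}$. I expect to circumvent this by leaning again on subharmonicity rather than on convexity: the one-variable inequality iterates to the submartingale bound $|F_{(n)}|^p\le\mathbb{E}[|F|^p\mid\mathcal{F}_n]$, and a Fatou/monotone-convergence argument across the tail (in the spirit of Cole and Gamelin \cite{CG}) should deliver both the inequality $\int_{\mathbb{T}^n}|F_{(n)}|^p\,dm_n\le\int_{\mathbb{T}^\infty}|F|^p\,d\rho$ and, via a.e. boundary convergence, the reverse, so that no mass is lost in the limit. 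Establishing this $d_p$-convergence and the identification of the limit with the prescribed $F$, uniformly in the two directions and across all $0<p\le1$, is where the real work lies.
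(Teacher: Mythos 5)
Your reduction to a (sub)martingale on $\mathbb{T}^\infty$ is sound in outline and essentially closes the cases $1<p\le\infty$, but the proposition's real content sits in the range $0<p\le 1$, and there your argument stops at an acknowledged ``should deliver''. This is a genuine gap, not a routine detail. Three of the tools you invoke break simultaneously at $p\le 1$: conditional expectation is not a contraction on $L^p(\rho)$ for $p<1$ (indeed $\mathbb{E}[\,\cdot\mid\mathcal{F}_n]$ need not even be defined on $L^p\not\subseteq L^1$); an $L^1$-bounded martingale need not converge in $L^1$, so even $p=1$ is not covered by Doob's theorem; and the submartingale bound $|F_{(n)}|^p\le\mathbb{E}[\,|F|^p\mid\mathcal{F}_n]$ you propose to lean on presupposes that $F$ already has a boundary function on $\mathbb{T}^\infty$ --- which, in the sufficiency direction, is precisely what has to be constructed. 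So the coordinate filtration $\mathcal{F}_1\subseteq\mathcal{F}_2\subseteq\cdots$ is the wrong axis along which to take limits when $p$ is small.

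The paper's proof circumvents this by taking limits along a different one-parameter family. It forms the dilates $F_{\mathrm{r}}(\zeta)=F(r\zeta_1,r^2\zeta_2,\cdots)$, which lie in $\mathbf{A}_\infty$ because $r\overline{\mathbb{D}}\times r^2\overline{\mathbb{D}}\times\cdots$ is compact in $l^2$, together with the one-variable functions $f_\zeta(z)=F(z\zeta_1,z^2\zeta_2,\cdots)$ for $\zeta\in\mathbb{T}^\infty$. Fubini's theorem and the translation invariance of $\rho$ convert $\int_{\mathbb{T}^\infty}|F_{\mathrm{r}}|^p\mathrm{d}\rho$ into an average of the one-variable means $\int_{\mathbb{T}}|f_\zeta(rz)|^p\mathrm{d}m_1(z)$, after which monotonicity in $r$, a.e.\ radial limits, and the $\mathrm{d}_p$-convergence $F_{\mathrm{r}}\to\tilde F$ are all inherited from classical one-variable $H^p$ theory, which is available for every $p>0$. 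That substitution of radial limits along the Kronecker flow for martingale convergence along the coordinate filtration is the missing idea; without it (or an equivalent device from Rudin or Aleman--Olsen--Saksman) your proof establishes neither membership in $\mathbf{H}_\infty^p$ nor the identity (\ref{2.2}) for $0<p\le 1$.
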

\begin{proof}
The  proof borrows ideals from \cite{Ru} and \cite{AOS},
we sketch it as follows.
  For each finite $p>0$, set
  $${\tilde{\mathbf{H}}_\infty^p}=\{F: F\,\mathrm{ is\ holomorphic \ on \, \mathbb{D}_2^\infty,\,  \ and \, \sup_{n\in\mathbb{N}}\int_{\mathbb{T}^n}|F_{(n)}|^p\mathrm{d}m_n<\infty}\}.$$
 Then   with the aid of (\ref{|F|p}), one can  check that the space ${\tilde{\mathbf{H}}_\infty^p}$ is complete with respect to the metric
  \begin{equation*}
  {\mathrm{d}_p}(F,G)= \begin{cases}
                                   \sup_{n\in\mathbb{N}}\int_{\mathbb{T}^n}|(F-G)_{(n)}|^p\mathrm{d}m_n, & 0<p<1 \\
  \sup_{n\in\mathbb{N}}\|(F-G)_{(n)}\|_{H^p(\mathbb{D}^n)}, & 1\leq p<\infty.
                                 \end{cases}
  \end{equation*}
  By \cite[Theorem 3.4.3]{Ru}, we see that $H^p(\mathbb{D}^n)\subseteq\mathbf{H}_\infty^p$ for each $n\in\mathbb{N}$.
  Since
  the identity map $i$ on the set of polynomials actually defines an isometry from dense polynomial  subspace of $\mathbf{H}_\infty^p$ into ${\tilde{\mathbf{H}}_\infty^p}$,
   the map $i$ can be uniquely extended to an isometrical imbedding from
  $\mathbf{H}_\infty^p$ into ${\tilde{\mathbf{H}}_\infty^p}$. This also implies that the norm equality (2.2) holds.
  So it remains to prove
  that the set of polynomials is also dense in ${\tilde{\mathbf{H}}_\infty^p}$. Since  the set of polynomials  is dense in $\mathbf{A}_\infty$ in the uniform norm, it is sufficient to  show that $\mathbf{A}_\infty$  is  dense in ${\tilde{\mathbf{H}}_\infty^p}$.
 For this end,   taking  $F\in{\tilde{\mathbf{H}}_\infty^p}$, note that the product topology of $\overline{\mathbb{D}}^\infty$ and the topology of $l^2$ coincide on the set
$$r\overline{\mathbb{D}}\times r^2\overline{\mathbb{D}}\times\cdots,\quad 0<r<1,$$
which shows that $F_{\mathrm{r}}(\zeta)=F(r\zeta_1,r^2\zeta_2,\cdots)\in\mathbf{A}_\infty$ because this set is compact in  the topology of $l^2$. Putting $$f_\zeta(z)=F(z\zeta_1,z^2\zeta_2,\cdots),\quad z\in \mathbb{D},\,\zeta\in\mathbb{T}^\infty.$$
then each $f_\zeta$ is holomorphic on $\mathbb{D}$.
 Applying Fubini's theorem the translation-invariance of  Haar measure $\rho$ shows that
\begin{equation}\label{2.3}
 \int_{\mathbb{T}^\infty}\left(\int_{\mathbb{T}}|f_{\zeta}(rz)|^p\mathrm{d}m_1(z)\right)\mathrm{d}\rho(\zeta)
=\int_{\mathbb{T}^\infty}|F(r\zeta_1,r^2\zeta_2,\cdots)|^p\mathrm{d}\rho(\zeta).
\end{equation}
Combining the fact $F_{\mathrm{r}}\in\mathbf{A}_\infty$ and  the  subharmonicity of $ |F_{(n)}|^p$, we see that
\begin{equation}\label{2.4}
\begin{split}
\int_{\mathbb{T}^\infty}|F_{\mathrm{r}}(\zeta)|^p\mathrm{d}\rho(\zeta)&=\sup_n \int_{\mathbb{T}^n}|\big(F_{\mathrm{r}}\big)_{(n)}(\zeta)|^p\mathrm{d}m_n\\
&\leq \sup_n \int_{\mathbb{T}^n}|F_{(n)}(\zeta)|^p\mathrm{d}m_n.
\end{split}
\end{equation}
This shows that  the left side of (2.4) is a  increasing  and bounded function of $r\in(0,1)$. For each $\zeta\in\mathbb{T}^\infty,$  write $$I(\zeta,r)=\int_{\mathbb{T}}|f_{\zeta}(rz)|^p\mathrm{d}m_1(z),\quad\,0<r<1.$$ The subharmonicity of $|f_{\zeta}|^p$ gives that $I(\zeta,r)$ is increasing in $r$.
Then by the monotone convergence theorem and  (2.3),  and (2.4), we have
\begin{equation}\label{2.5}
\int_{\mathbb{T}^\infty}I(\zeta)\mathrm{d}\rho(\zeta)=\lim_{r\rightarrow1^-}\int_{\mathbb{T}^\infty}I(\zeta,r)\mathrm{d}\rho(\zeta)
<\infty,
\end{equation}
where $I(\zeta)=\lim_{r\rightarrow1^-}I(\zeta,r)$.
Therefore, for almost all $\zeta\in\mathbb{T}^\infty$ (with respect to $\rho$),
$I(\zeta,r)$ is bounded in $r$, and thus $f_{\zeta}\in H^p(\mathbb{D})$. In particular, such
$f_{\zeta}$ have radial limits a.e. on $\mathbb{T}$. As done in the proof of \cite[Theorem 3.3.3]{Ru},  by Fubini's theorem, the  limit $\lim_{r\rightarrow1^-}F(r\zeta_1,r^2\zeta_2,\cdots)$,
denoted by $\tilde{F}(\zeta)$, exists a.e. on $\mathbb{T}^\infty$.
This deduces   that $f_\zeta(z)=F(z\zeta_1,z^2\zeta_2,\cdots)$
for almost all $z\in\mathbb{T}$ and $\zeta\in\mathbb{T}^\infty$, and
\begin{equation*}
  \begin{split}
     \int_{\mathbb{T}^\infty}|\tilde{F}(\zeta)|^p\mathrm{d}\rho(\zeta) & =\int_{\mathbb{T}^\infty}\left(\int_{\mathbb{T}}|f_{\zeta}(z)|^p\mathrm{d}m_1(z)\right)\mathrm{d}\rho(\zeta)  \\
     & =\int_{\mathbb{T}^\infty}I(\zeta)\mathrm{d}\rho(\zeta) \\
       & =\sup_{0<r<1}\int_{\mathbb{T}^\infty}|F_{\mathrm{r}}(\zeta)|^p\mathrm{d}\rho(\zeta)\\
        & =\sup_{0<r<1}\sup_n \int_{\mathbb{T}^\infty}|\big(F_{\mathrm{r}}\big)_{(n)}(\zeta)|^p\mathrm{d}m_n\\
         & =\sup_n \int_{\mathbb{T}^\infty}|F_{(n)}(\zeta)|^p\mathrm{d}m_n\\
         &<+\infty,
  \end{split}
\end{equation*}
here the first equality follows from Fubini's theorem, and the third from (2.5) and Fubini's theorem. It follows that $\tilde{F}\in L^p(\rho).$
Applying a similar argument  as in the proof of \cite[Theorem 3.4.3]{Ru} shows that
\begin{equation}\label{2.6}
\int_{\mathbb{T}^\infty}|F_{\mathrm{r}}(\zeta)-\tilde{F}(\zeta)|^p\mathrm{d}\rho(\zeta)\rightarrow0,\quad
r\rightarrow1^-,\end{equation}
and hence $\tilde{F}\in {H}^p(\rho).$  From the   statement following Lemma 2.2,  $\tilde{F}$ can be uniquely  extended to a holomorphic function on the domain  $\mathbb{D}_2^\infty$, still denoted by $\tilde{F}$, and then  $\tilde{F}\in \mathbf{H}_\infty^p$.
By the next reasoning
\begin{equation*}
  \begin{split}
&\sup_n \int_{\mathbb{T}^n}|\big(F_{\mathrm{r}}\big)_{(n)}(\zeta)-F_{(n)}(\zeta)|^p\mathrm{d}m_n\\
 &=\sup_n \int_{\mathbb{T}^n}|\big(F_{\mathrm{r}}\big)_{(n)}(\zeta)-\tilde{F}_{(n)}(\zeta)|^p\mathrm{d}m_n\\
&=\int_{\mathbb{T}^\infty}|F_{\mathrm{r}}(\zeta)-\tilde{F}(\zeta)|^p\mathrm{d}\rho(\zeta)
\end{split}
\end{equation*}
Combining the above reasoning with  (2.6)  shows   ${\mathrm{d}_p}(F_{\mathrm{r}}, F) \rightarrow0,$ as $r\rightarrow1^-.$  This means that  $\mathbf{A}_\infty$  is  dense in
  ${\tilde{\mathbf{H}}_\infty^p}$, and $F=\tilde{F},$  completing the proof in  the case $0<p<\infty$.

 The proof of the case $p=\infty$ is completed by combining some simple real analysis and   Lemma \ref{polynomials w* dense in Hinfty} below.
\end{proof}

Recall that for $F\in \mathbf{H}_\infty^2$, $[F]$ denotes the joint invariant subspace of  $\mathbf{H}_\infty^2$  generated by $F$ for the coordinate multiplication operators.

\begin{lem}[\cite{Ni1}]  \label{polynomials w* dense in Hinfty}  Let $\phi$ be a bounded holomorphic function  on $\mathbb{D}_2^\infty$. Then there exists a sequence of polynomials converging pointwise boundedly to $\phi$. In particular,
if $F\in \mathbf{H}_\infty^2$, then $\phi F\in[F]$.
\end{lem}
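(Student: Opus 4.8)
The plan is to manufacture the approximating polynomials by first \emph{dilating} $\phi$ and then polynomially approximating the dilations in the uniform norm, and to deduce the multiplier statement $\phi F\in[F]$ by passing to boundary values on $\mathbb{T}^\infty$. For $0<r<1$ I would set $\phi_r(\zeta)=\phi(r\zeta_1,r^2\zeta_2,\cdots)$. As already observed in the proof of Proposition \ref{definition of Hp}, for $\zeta\in\overline{\mathbb{D}}^\infty$ the point $(r\zeta_1,r^2\zeta_2,\cdots)$ lies in the compact set $r\overline{\mathbb{D}}\times r^2\overline{\mathbb{D}}\times\cdots\subset l^2$, on which the product topology and the $l^2$-topology coincide; hence $\phi_r\in\mathbf{A}_\infty$ and $\|\phi_r\|_\infty\leq\|\phi\|_\infty$. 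Two convergence facts are then needed. Since $\phi$ is holomorphic, hence continuous in the $l^2$-topology on $\mathbb{D}_2^\infty$, and $\|(r\zeta_1,r^2\zeta_2,\cdots)-(\zeta_1,\zeta_2,\cdots)\|_{l^2}^2=\sum_n(1-r^n)^2|\zeta_n|^2\to0$ as $r\to1^-$ (each term tends to $0$ and the sum is dominated by $\sum_n|\zeta_n|^2<\infty$), we get $\phi_r(\zeta)\to\phi(\zeta)$ for every $\zeta\in\mathbb{D}_2^\infty$. On the other hand, applying the boundary-value construction in the proof of Proposition \ref{definition of Hp} to $\phi\in\mathbf{H}^\infty\subseteq\mathbf{H}_\infty^2$ (with $p=2$), the limit $\phi_r\to\phi$ holds almost everywhere on $\mathbb{T}^\infty$, and there $|\phi_r|\leq\|\phi\|_\infty$ a.e.

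To extract the polynomial sequence I would fix $r_k=1-1/k$ and, using that the polynomials are dense in $\mathbf{A}_\infty$ in the uniform norm, choose a polynomial $P_k$ with $\|P_k-\phi_{r_k}\|_{C(\overline{\mathbb{D}}^\infty)}<1/k$. Then $\sup_k\sup_{\zeta\in\mathbb{D}_2^\infty}|P_k(\zeta)|\leq\|\phi\|_\infty+1$, while for each fixed $\zeta\in\mathbb{D}_2^\infty$ one has $|P_k(\zeta)-\phi(\zeta)|\leq 1/k+|\phi_{r_k}(\zeta)-\phi(\zeta)|\to0$. This produces a sequence of polynomials converging pointwise boundedly to $\phi$, which is the first assertion.

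For the ``in particular'' part, recall that $[F]$ is the closure in $\mathbf{H}_\infty^2$ of $\{PF:P\text{ a polynomial}\}$. For each fixed $r$, uniformly approximating $\phi_r\in\mathbf{A}_\infty$ by polynomials $Q_j$ gives $\|Q_jF-\phi_rF\|_2\leq\|Q_j-\phi_r\|_\infty\|F\|_2\to0$, so $\phi_rF\in[F]$. It then suffices to show $\phi_rF\to\phi F$ in $\mathbf{H}_\infty^2$. Here the pointwise convergence on $\mathbb{D}_2^\infty$ is of no direct use, so I would instead work on $\mathbb{T}^\infty$, computing with boundary functions
\[
\|\phi_rF-\phi F\|_2^2=\int_{\mathbb{T}^\infty}|\phi_r-\phi|^2|F|^2\,\mathrm{d}\rho .
\]
Since $\phi_r\to\phi$ a.e. on $\mathbb{T}^\infty$ and $|\phi_r-\phi|^2|F|^2\leq 4\|\phi\|_\infty^2|F|^2\in L^1(\rho)$, the dominated convergence theorem gives $\|\phi_rF-\phi F\|_2\to0$; as each $\phi_rF\in[F]$ and $[F]$ is closed, this $L^2(\rho)$-limit lies in $\mathbf{H}_\infty^2$, equals the boundary product $\phi F$, and belongs to $[F]$.

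The main obstacle is precisely this last passage: the two relevant notions of convergence — pointwise on the interior $\mathbb{D}_2^\infty$ versus almost everywhere on the distinguished boundary $\mathbb{T}^\infty$ — do not coincide, and only the boundary convergence, combined with the uniform bound $\|\phi_r\|_\infty\leq\|\phi\|_\infty$, is strong enough to control the $\mathbf{H}_\infty^2$-norm of the products $\phi_rF$ via a single dominating function. All the supporting facts I rely on (that $\phi_r\in\mathbf{A}_\infty$, and that $\phi_r\to\phi$ a.e. on $\mathbb{T}^\infty$) are exactly those already established in the proof of Proposition \ref{definition of Hp}, so no new machinery is required beyond organizing these pieces.
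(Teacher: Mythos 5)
The paper states this lemma without proof, attributing it to \cite{Ni1}, so there is no in-paper argument to compare against line by line. Your proposal is correct and self-contained given Proposition \ref{definition of Hp}: the weighted dilation $\phi_r(\zeta)=\phi(r\zeta_1,r^2\zeta_2,\cdots)$ does land in $\mathbf{A}_\infty$ with $\|\phi_r\|_\infty\leq\|\phi\|_\infty$, the interior pointwise convergence and the a.e.\ boundary convergence $\phi_r\to\phi$ on $\mathbb{T}^\infty$ are both available from the proof of that proposition (applied with $p=2$, since $\phi\in\mathbf{H}^\infty\subseteq\mathbf{H}_\infty^2$), and your dominated-convergence argument on $\mathbb{T}^\infty$ correctly yields $\phi_rF\to\phi F$ in norm, with the limit identified as the holomorphic product via continuity of point evaluations. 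This is exactly the dilation technique the paper itself uses elsewhere (e.g.\ for $F_{\mathbf{r}}$ in the proofs of Proposition \ref{definition of Hp} and Proposition 3.7), so your reconstruction is faithful to the paper's toolkit; I see no gap.
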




   The space $\mathbf{H}_\infty^2$ is a reproducing kernel Hilbert space on $\mathbb{D}_2^{\infty}$
     with the kernels $\mathbf{K}_\lambda(\zeta)=\sum_{\alpha\in\mathbb{Z}_+^{(\infty)}}\overline{\lambda^\alpha}\zeta^\alpha$ ($\lambda\in\mathbb{D}_2^{\infty}$).
We  mention that, the reproducing kernels $\mathbf{K}_\lambda$ of $\mathbf{H}_\infty^2$, distinct from the finite polydisk case, are not always bounded on $\mathbb{D}_2^\infty$. Indeed,

\begin{lem}[\cite{Ni1}]  \label{Klambda is bounded} $\mathbf{K}_\lambda\in \mathbf{H}^\infty$ if and only if $\sum_{n=1}^{\infty}|\lambda_n|<\infty$.
\end{lem}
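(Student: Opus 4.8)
The plan is to compute the kernel explicitly and thereby reduce the boundedness question to a classical infinite-product criterion. First I would observe that, since each monomial factors across coordinates, the kernel separates as an infinite product of one-variable Szeg\H{o} kernels:
$$\mathbf{K}_\lambda(\zeta)=\sum_{\alpha\in\mathbb{Z}_+^{(\infty)}}\overline{\lambda^\alpha}\zeta^\alpha=\prod_{n=1}^{\infty}\sum_{k=0}^{\infty}(\overline{\lambda_n}\zeta_n)^k=\prod_{n=1}^{\infty}\frac{1}{1-\overline{\lambda_n}\zeta_n},$$
the rearrangement being justified by absolute convergence for $\zeta\in\mathbb{D}_2^\infty$, where each $|\overline{\lambda_n}\zeta_n|<1$. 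The whole problem then becomes: for which $\lambda$ is this product a bounded holomorphic function on $\mathbb{D}_2^\infty$.

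For sufficiency, suppose $\sum_n|\lambda_n|<\infty$. Since $\sum_n|\overline{\lambda_n}\zeta_n|\le\sum_n|\lambda_n|<\infty$ uniformly for $\zeta\in\overline{\mathbb{D}}^\infty$, the standard theory of infinite products shows that the partial products converge uniformly on $\overline{\mathbb{D}}^\infty$ to a continuous, nonvanishing function (the factors being bounded away from $0$ by $1-|\lambda_n|$). As each factor $\tfrac{1}{1-\overline{\lambda_n}\zeta_n}$ lies in $\mathbf{A}_\infty$, being a uniform limit on $\overline{\mathbb{D}}$ of its Taylor polynomials, and $\mathbf{A}_\infty$ is a uniformly closed algebra, the limit lies in $\mathbf{A}_\infty\subseteq\mathbf{H}^\infty$; moreover $\|\mathbf{K}_\lambda\|_\infty\le\prod_n\tfrac{1}{1-|\lambda_n|}<\infty$. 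This gives $\mathbf{K}_\lambda\in\mathbf{H}^\infty$.

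For necessity I would exploit the finite restrictions $F_{(n)}$ together with the contractivity of restriction. Computing $(\mathbf{K}_\lambda)_{(n)}(\zeta_1,\dots,\zeta_n)=\prod_{j=1}^n\tfrac{1}{1-\overline{\lambda_j}\zeta_j}$, a finite product that separates variables and is continuous on $\overline{\mathbb{D}^n}$, its $H^\infty(\mathbb{D}^n)$ norm equals the supremum over the distinguished boundary $\mathbb{T}^n$, attained by choosing each $\zeta_j=\lambda_j/|\lambda_j|$, giving $\|(\mathbf{K}_\lambda)_{(n)}\|_{H^\infty(\mathbb{D}^n)}=\prod_{j=1}^n\tfrac{1}{1-|\lambda_j|}$. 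On the other hand, letting $p\to\infty$ in the pointwise bound \eqref{|F|p} yields $\sup_{\mathbb{D}_2^\infty}|F|\le\|F\|_{\mathbf{H}^\infty}$ for every $F\in\mathbf{H}^\infty$; since $(\mathbf{K}_\lambda)_{(n)}$ on $\mathbb{D}^n$ is just $\mathbf{K}_\lambda$ along the coordinate subspace $\mathbb{D}^n\hookrightarrow\mathbb{D}_2^\infty$, restriction is norm-decreasing, i.e. $\|(\mathbf{K}_\lambda)_{(n)}\|_{H^\infty(\mathbb{D}^n)}\le\|\mathbf{K}_\lambda\|_{\mathbf{H}^\infty}$ (this is the $p=\infty$ content of Proposition \ref{definition of Hp}). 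Hence if $\mathbf{K}_\lambda\in\mathbf{H}^\infty$, then $\prod_{j=1}^n\tfrac{1}{1-|\lambda_j|}$ is bounded in $n$, so the infinite product converges.

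In both directions the conclusion rests on the elementary equivalence that $\prod_n\tfrac{1}{1-|\lambda_n|}<\infty$ if and only if $\sum_n|\lambda_n|<\infty$, which follows from $-\log(1-|\lambda_n|)\sim|\lambda_n|$ as $|\lambda_n|\to0$, valid since $\lambda\in l^2$ forces $|\lambda_n|\to0$. I expect the only point requiring genuine care to be the passage to the finite-variable restrictions: one must verify that the $H^\infty(\mathbb{D}^n)$ norm really is the separated product of the one-variable suprema and that restriction to a coordinate subspace does not increase the norm. These are exactly what the maximum modulus principle on the polydisk and the pointwise estimate \eqref{|F|p} (equivalently Proposition \ref{definition of Hp}) supply; the factorization of the kernel and the infinite-product criterion are then routine.
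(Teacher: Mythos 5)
Your proof is correct. Note that the paper itself gives no argument for this lemma --- it is quoted from Nikolski's paper [Ni1] --- but your route is the standard one and is essentially what is behind the citation: the factorization $\mathbf{K}_\lambda=\prod_n(1-\overline{\lambda_n}\zeta_n)^{-1}$ (which the paper itself records in Section 4), the bound $\|\mathbf{K}_\lambda\|_\infty\leq\prod_n(1-|\lambda_n|)^{-1}$ for sufficiency, the lower bound via the finite restrictions $(\mathbf{K}_\lambda)_{(n)}$ for necessity, and the elementary equivalence $\prod_n(1-|\lambda_n|)^{-1}<\infty\iff\sum_n|\lambda_n|<\infty$. The only cosmetic remark is that your appeal to the estimate (2.1) with $p\to\infty$ is unnecessary: since $\mathbf{H}^\infty$ carries the sup-norm over $\mathbb{D}_2^\infty$ by definition, the inequality $\|(\mathbf{K}_\lambda)_{(n)}\|_{H^\infty(\mathbb{D}^n)}\leq\|\mathbf{K}_\lambda\|_{\mathbf{H}^\infty}$ is immediate because the points $(\zeta_1,\dots,\zeta_n,0,0,\dots)$ lie in $\mathbb{D}_2^\infty$.
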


Now we define, in a natural way, inner functions on $\mathbb{D}_2^\infty$ to be  bounded holomorphic functions on $\mathbb{D}_2^\infty$ with boundary-value function of modulus $1$ almost everywhere on $\mathbb{T}^\infty$.
It is clear that  $\eta$ is an inner function if and only if  the multiplication operator $M_\eta$ with symbol $\eta$
is an isometry on $\mathbf{H}_\infty^2$. Inner functions play an important role in the $\mathbf{H}^p_\infty$-theory. Here we mention three papers by  Bourgin \cite{Bou1, Bou2},   Bourgin and Mendel \cite{BM}. In these papers they  treated the characterization of functions in $L^2(-\pi,\pi)$ which are odd, of period $2\pi$, and such that $\{f(nx)\}$ is orthonormal, and some   completeness problems associated with such $f$. By passing to  the space $L^2(0,1)$, the problem is to consider those $f$ with $\|f\|=1$ such that
$$\langle f(mx), f(nx)\rangle=\delta_{m \,n},\quad m,n=1,2,\cdots.$$
It is easy to verify that the above orthogonality is equivalent to that  $\mathbf{B}U (f)$ is an inner function on $\mathbb{D}_2^\infty$.  Applying the Bohr transform and $\mathbf{H}_\infty^2$-theory, some proofs of results in \cite{Bou1, Bou2, BM} can be greatly  simplified.  For example, combined with \cite[Theorem 3.2]{DGH}, one easily deduces
\cite[Theorems 2.4, 2.5, 2.6, 7.1, 7.2]{BM} and some related  theorems.

We note  that there exists a natural connection between inner functions and isometric dilation theory for doubly
commuting sequences of $C_{.0}$-contractions \cite{DG}. It is also worth mentioning that the preimage of an inner function on $\mathbb{D}_2^\infty$ under the Bohr transform $\mathbf{B}$ is exactly a $\mathcal{H}^2$-inner function, which was introduced in \cite{O}.

\section{Cyclic vectors in the infinite polydisk algebra}

Recall that  $\overline{\mathbb{D}}^\infty$
denotes the product of countably infinitely many closed unit disks.
It is known that $R\,\overline{\mathbb{D}}^\infty$ with the product topology is a compact
metric space for any $R>0$, where the metric $\mathrm{d}(\zeta,\xi)$ is given by
$$\mathrm{d}(\zeta,\xi)=\sum_{n=1}^{\infty}\frac{1}{2^n}\frac{|\zeta_n-\xi_n|}{1+|\zeta_n-\xi_n|},
\quad\zeta,\xi\in R\,\overline{\mathbb{D}}^\infty.$$
Put
$z\zeta=(z\zeta_1,z\zeta_2,\cdots)$ for $\zeta\in\mathbb{C}^\infty$ and $z\in\mathbb{C}$.
It is easy to verify that
\begin{equation}\label{dist}
  \mathrm{d}(z\zeta,z\xi)\leq (R+1)\mathrm{d}(\zeta,\xi)
\end{equation} for each  $\zeta$, $\xi\in \overline{\mathbb{D}}^\infty$ and $z\in R\,\mathbb{D}$.
Define $\mathbf{A}_{R,\infty}$ to be the closed subalgebra of $C(R\,\overline{\mathbb{D}}^\infty)$ generated by the coordinate functions $\zeta_1$, $\zeta_2$, $\cdots$.

In this section, we completely determine when functions in $\mathbf{A}_{R,\infty}\ (R>1)$ are cyclic.

\begin{thm}\label{t5} If $R>1$ and $F\in \mathbf{A}_{R,\infty}$, then $F$ is cyclic if and only if $F$ has no zero in $\mathbb{D}^\infty_2$.
\end{thm}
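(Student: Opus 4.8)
The necessity is already recorded above: a cyclic vector of $\mathbf{H}_\infty^2$ cannot vanish anywhere on $\mathbb{D}_2^\infty$ (see \cite{Ni1}). So I concentrate on sufficiency, i.e.\ I assume $F\in\mathbf{A}_{R,\infty}$ ($R>1$) has no zero in $\mathbb{D}_2^\infty$ and I want $[F]=\mathbf{H}_\infty^2$. Since $[F]$ is closed and invariant under every $M_{\zeta_n}$, it suffices to prove $1\in[F]$: then $\zeta^\alpha=\zeta^\alpha\cdot 1\in[F]$ for all $\alpha$, and polynomials are dense. The engine will be a dilation. For $0<r<1$ set $\Phi_r(\zeta)=(r\zeta_1,r^2\zeta_2,\dots)$ and $G_r=F\circ\Phi_r$. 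The crucial point is that $\Phi_r$ maps the whole compact polydisk $\overline{\mathbb{D}}^\infty$ \emph{into} $\mathbb{D}_2^\infty$, since each coordinate has modulus $\le r^n<1$ and $\sum_n r^{2n}<\infty$. Consequently $G_r$ is a uniform limit of polynomials (compose the polynomial approximants of $F$ with the linear map $\Phi_r$), so $G_r\in\mathbf{A}_\infty$, and $G_r$ has no zero on all of $\overline{\mathbb{D}}^\infty$ because $F$ has none on $\Phi_r(\overline{\mathbb{D}}^\infty)\subset\mathbb{D}_2^\infty$.

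Being continuous and zero-free on the compact set $\overline{\mathbb{D}}^\infty$, the function $G_r$ is bounded below there, hence $1/G_r$ is a bounded holomorphic function on $\mathbb{D}_2^\infty$, i.e.\ $1/G_r\in\mathbf{H}^\infty$. By Lemma \ref{polynomials w* dense in Hinfty} this yields $F/G_r=(1/G_r)\,F\in[F]$ for every $r\in(0,1)$. Thus everything reduces to showing that
$$\|F/G_r-1\|_{\mathbf{H}_\infty^2}\longrightarrow 0\qquad(r\to1^-),$$
for then $1\in[F]$ and the theorem follows. To estimate this I would slice along circles. Because $(F/G_r)(0)=F(0)/F(0)=1$, the function $F/G_r-1$ is orthogonal to the constants, so $\|F/G_r-1\|_{\mathbf{H}_\infty^2}^2=\int_{\mathbb{T}^\infty}|F/G_r|^2\,d\rho-1$. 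Writing $f_\zeta(w)=F(w\zeta_1,w^2\zeta_2,\dots)$ and exploiting the translation invariance of Haar measure on $\mathbb{T}^\infty$ exactly as in the proof of Proposition \ref{definition of Hp} (the substitution $T_w\zeta=(w^n\zeta_n)_n$ is $\rho$-preserving, $F(T_w\zeta)=f_\zeta(w)$ and $G_r(T_w\zeta)=f_\zeta(rw)$), one obtains
\[
\int_{\mathbb{T}^\infty}|F/G_r|^2\,d\rho=\int_{\mathbb{T}^\infty}\Big(\int_{\mathbb{T}}\frac{|f_\zeta(w)|^2}{|f_\zeta(rw)|^2}\,dm(w)\Big)\,d\rho(\zeta).
\]

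For every $\zeta\in\mathbb{T}^\infty$ the slice $f_\zeta$ lies in the disk algebra (as $F\in\mathbf{A}_\infty$ and $w\mapsto(w^n\zeta_n)_n$ is continuous into $\overline{\mathbb{D}}^\infty$ and holomorphic into $\mathbb{D}_2^\infty$) and is zero-free on $\mathbb{D}$, because for $|w|<1$ the point $(w^n\zeta_n)_n$ sits in $\mathbb{D}_2^\infty$. Hence each $f_\zeta$ is \emph{outer}, and for an outer function the inner integral tends to $1$ as $r\to1^-$ — this is precisely the classical cyclicity of outer functions in $H^2(\mathbb{D})$, made quantitative through the dilation approximation (and it is exactly this point that fails for an inner $f_\zeta$, where the inner integral would blow up). Granting that one may pass the limit through $\int_{\mathbb{T}^\infty}$, the whole integral tends to $1$ and the argument closes.

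The main obstacle is exactly this last interchange: one must control $\int_{\mathbb{T}^\infty}\big(\int_{\mathbb{T}}|f_\zeta(w)|^2/|f_\zeta(rw)|^2\,dm(w)-1\big)\,d\rho(\zeta)$ uniformly in $r$, i.e.\ establish a uniform-integrability (or dominated-convergence) bound for the family of slice integrals over $\mathbb{T}^\infty$. This is where $R>1$ is indispensable and where I expect the real work to lie: mere continuity up to $\overline{\mathbb{D}}^\infty$ is not enough (already in finitely many variables, zero-freeness together with membership in the polydisk algebra does not force cyclicity), whereas holomorphy on the strictly larger polydisk renders the boundary zero sets real-analytic and supplies the growth estimates needed for a uniform control. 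I therefore anticipate the rigorous finish to descend to the finite sections $F_{(n)}$, which are holomorphic on a neighborhood of $\overline{\mathbb{D}^n}$ and zero-free on $\mathbb{D}^n$ and hence cyclic by Nikolski's finite-variable theorem (via his stratification and growth estimates), and then to transfer the resulting uniform bound to the limit $n\to\infty$ using $F_{(n)}\to F$ in $\mathbf{H}_\infty^2$ (Lemma \ref{l4}) together with the dilation $G_r$ introduced above.
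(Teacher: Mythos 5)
Your overall architecture is the right one (it is the paper's: show $F/F_r\in[F]$ for a family of dilations $F_r$ with $1/F_r$ bounded, then let $r\to1^-$), but two of your steps have genuine gaps, one of which is an outright false inference.

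First, the claim that each slice $f_\zeta(w)=F(w\zeta_1,w^2\zeta_2,\cdots)$ is outer ``because it lies in the disk algebra and is zero-free on $\mathbb{D}$'' is not a valid implication: a disk-algebra function without zeros in $\mathbb{D}$ can carry a nontrivial singular inner factor, e.g.\ $(1-z)\exp\bigl(\frac{z+1}{z-1}\bigr)$ --- this is exactly the paper's Remark \ref{r1}, which exhibits a zero-free $F\in\mathbf{A}_\infty$ that is \emph{not} cyclic. Worse, your choice of dilation weights $(r,r^2,r^3,\cdots)$ forfeits the hypothesis $R>1$ at precisely the point where it must be used: for $\zeta\in\mathbb{T}^\infty$ the point $(w\zeta_1,w^2\zeta_2,\cdots)$ lies in $R\,\overline{\mathbb{D}}^\infty$ only when $|w|\le\inf_n R^{1/n}=1$, so your slices are merely disk-algebra functions and inherit no analyticity past $\mathbb{T}$. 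The paper instead dilates homogeneously, $F_r(\zeta)=F(r\zeta_1,r\zeta_2,\cdots)$, so that the slices $f_\zeta(z)=F(z\zeta)$ are holomorphic on all of $R\,\mathbb{D}$; for such functions zero-freeness in $\mathbb{D}$ does force outerness, and, more to the point, Lemma \ref{l2} gives the quantitative bound $\|f h_r\|\le C(f)\|f h\|$ with a constant controlled by the zeros of $f$ in the larger disk.

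Second, the uniform control needed to pass the limit through $\int_{\mathbb{T}^\infty}$ is not an afterthought to be ``anticipated''; it is the entire content of the theorem, and your sketch of descending to finite sections and invoking Nikolski's finite-variable result does not supply a bound that is uniform in the number of variables. The paper resolves this by showing (Corollary \ref{c3}) that the constant $\mu(f)=\sup\|fh_r\|/\|fh\|$ is locally bounded on $H^\infty(R\,\mathbb{D})\setminus\{0\}$ and that $\zeta\mapsto f_\zeta$ is continuous from the compact space $\overline{\mathbb{D}}^\infty$ into $H^\infty(R\,\mathbb{D})\setminus\{0\}$, so that a single constant $C$ works for every slice; the slice-integration identity then yields $\|F/F_r\|\le C\|1\|$ uniformly in $r$ (Lemma \ref{p1}). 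Note also that you are working harder than necessary by aiming at norm convergence of $F/G_r$ to $1$: since $[F]$ is a closed subspace, hence weakly closed, a uniform norm bound together with pointwise convergence on $\mathbb{D}_2^\infty$ already gives $F/F_r\stackrel{\mathrm{w}}{\to}1$ and hence $1\in[F]$. As written, your argument establishes $F/G_r\in[F]$ correctly, but the convergence step rests on an unproved (and, for your slices, unprovable-as-stated) outerness claim and an unestablished uniform-integrability bound, so the proof is incomplete.
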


As an immediate application of Theorem \ref{t5}, we obtain a result in \cite{Ni1}.

\begin{cor} If $F$ is holomorphic on some neighborhood of $\overline{\mathbb{D}}^n\ (n\in\mathbb{N})$, then $F$ is cyclic in $H^2(\mathbb{D}^n)$ for the coordinate multiplication operators $M_{\zeta_1},\cdots,M_{\zeta_n}$ if and only if $F$ has no zero in $\mathbb{D}^n$.
\end{cor}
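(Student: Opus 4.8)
The plan is to deduce this corollary directly from Theorem~\ref{t5} by transporting the problem into $\mathbf{H}_\infty^2$. Throughout, regard $F$, which depends only on $\zeta_1,\dots,\zeta_n$, as a function on $\overline{\mathbb{D}}^\infty$ that is constant in the remaining coordinates. The first task is to produce an $R>1$ with $F\in\mathbf{A}_{R,\infty}$. Since $F$ is holomorphic on an open neighborhood $U$ of the compact set $\overline{\mathbb{D}^n}$, I would choose $\rho>1$ with $\rho\,\overline{\mathbb{D}^n}\subset U$ and fix any $R$ with $1<R<\rho$. Then $F$ is holomorphic on the open polydisk $\rho\mathbb{D}^n$, so its monomial expansion at the origin converges absolutely and uniformly on the compact polydisk $R\,\overline{\mathbb{D}^n}$; hence $F$ is a uniform limit there of polynomials in $\zeta_1,\dots,\zeta_n$. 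As $F$ is constant in the tail variables, the supremum over $R\,\overline{\mathbb{D}}^\infty$ reduces to the supremum over $R\,\overline{\mathbb{D}^n}$, so these polynomial approximants converge to $F$ uniformly on $R\,\overline{\mathbb{D}}^\infty$, which places $F\in\mathbf{A}_{R,\infty}$.

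Next I would transfer the two conditions appearing in Theorem~\ref{t5} to the $n$-variable setting. Identify $\mathbf{H}_\infty^2$ with the Hilbert tensor product $\mathcal{M}_n\otimes K_n$, where $\mathcal{M}_n$ is the closed span of the monomials in $\zeta_1,\dots,\zeta_n$ (isometrically $H^2(\mathbb{D}^n)$) and $K_n$ is the closed span of the monomials in $\zeta_{n+1},\zeta_{n+2},\dots$. Under the unitary sending $\zeta^\alpha$ to $\zeta^{\alpha'}\otimes\zeta^{\alpha''}$, where $\alpha=\alpha'+\alpha''$ splits the multi-index into its first-$n$ and tail parts, each $M_{\zeta_j}$ with $j\le n$ becomes $M_{\zeta_j}\otimes I$, each $M_{\zeta_j}$ with $j>n$ becomes $I\otimes M_{\zeta_j}$, and $F$ corresponds to $F\otimes 1$. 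Because every monomial factors as a monomial in the first $n$ variables times a monomial in the tail variables, the computation gives $[F]=\mathcal{N}\otimes K_n$, where $\mathcal{N}$ is the cyclic subspace generated by $F$ in $H^2(\mathbb{D}^n)$. Consequently $F$ is cyclic in $\mathbf{H}_\infty^2$ if and only if $\mathcal{N}=H^2(\mathbb{D}^n)$, that is, if and only if $F$ is cyclic in $H^2(\mathbb{D}^n)$ for $M_{\zeta_1},\dots,M_{\zeta_n}$.

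For the zero condition, since $F$ depends only on $\zeta_1,\dots,\zeta_n$ and every point of $\mathbb{D}^n$ arises as the first $n$ coordinates of some $\zeta\in\mathbb{D}_2^\infty$ (extend by zeros, which lies in $l^2$), the function $F$ has no zero in $\mathbb{D}_2^\infty$ exactly when it has no zero in $\mathbb{D}^n$. Chaining these two equivalences with Theorem~\ref{t5}, which applies by the first step since $F\in\mathbf{A}_{R,\infty}$, yields precisely the asserted characterization: $F$ is cyclic in $H^2(\mathbb{D}^n)$ iff $F$ has no zero in $\mathbb{D}^n$.

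The step I expect to require the most care is the tensor identification $[F]=\mathcal{N}\otimes K_n$: one must verify that the closed linear span of the products $(\zeta^{\beta}F)\otimes\zeta^{\gamma}$, as $\beta$ ranges over first-$n$ multi-indices and $\gamma$ over tail multi-indices, is the full Hilbert tensor product $\mathcal{N}\otimes K_n$. This relies on the density of the tail monomials $\{\zeta^\gamma\}$ in $K_n$ together with the standard fact that $\overline{\mathrm{span}}\{u\otimes v:\ u\in A,\ v\in B\}=\overline{A}\otimes\overline{B}$ for linear subspaces $A,B$. By contrast, the embedding step is routine multivariable complex analysis and the zero-transfer step is immediate, so the whole argument is indeed an immediate application of Theorem~\ref{t5}.
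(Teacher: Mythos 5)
Your proposal is correct and follows exactly the route the paper intends: the paper offers no explicit proof, presenting the corollary as an ``immediate application'' of Theorem~\ref{t5}, and the implicit argument is precisely yours --- embed $F$ into $\mathbf{A}_{R,\infty}$ via its uniformly convergent Taylor expansion on $R\,\overline{\mathbb{D}}^n$ ($1<R<\rho$), transfer cyclicity between $H^2(\mathbb{D}^n)$ and $\mathbf{H}_\infty^2$ (your tensor-product identification $[F]=\mathcal{N}\otimes K_n$ is a proof of the paper's Lemma~\ref{cyclicity in H2(S)}, which the paper simply cites from Nikolski), and note that the zero sets correspond. The only difference is that you supply the details the paper leaves to the reader.
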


To prove Theorem \ref{t5}, we need some preparations.
Write $h_r(z)=h(rz)$ for $h\in H^2(\mathbb{D})$ and $0<r<1$. The next lemma can be  derived from the proof of Theorem 1 in\cite{Ros}.   Here we give a straightforward  proof for our purpose.

\begin{lem}\label{l2} Let $f$ be a nonzero holomorphic function on $R\,\mathbb{D}$ for some
$R>1$, and $1<R_1<R_2<R$. If $f$ has no zero in the closed annulus $\{z\in\mathbb{C}:R_1\leq|z|\leq R_2\}$, then for any $h\in H^2(\mathbb{D})$ and $0<r<1$,
$$\|fh_r\|_{H^2(\mathbb{D})}
  \leq \frac{2^d(R_1+R_2)^d\,M}{(R_2-R_1)^d\,m}\|fh\|_{H^2(\mathbb{D})},
$$ where
$M=\max\limits_{|z|=R_2}|f(z)|$,
$m=\min\limits_{|z|=R_2}|f(z)|$,
and $d$ denotes the number of zeros of $f$ in $R_1\mathbb{D}$, counting multiplicity.
\end{lem}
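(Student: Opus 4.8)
The plan is to reduce everything to an estimate on the boundary circle $\mathbb{T}$, where the $H^2(\mathbb{D})$-norm lives, and to split $f$ into a zero-free factor and a polynomial carrying the zeros. Throughout write $F_r(z)=F(rz)$, and recall the elementary contraction $\|F_r\|_{H^2(\mathbb{D})}\le\|F\|_{H^2(\mathbb{D})}$ (immediate from $\|F_r\|^2=\sum_n|\hat F(n)|^2r^{2n}$). The starting point is the identity $fh_r=\frac{f(z)}{f(rz)}(fh)_r$, valid since $(fh)_r(z)=f(rz)h(rz)$; because only $f$ is used off $\mathbb{D}$ and $f$ is bounded on $\overline{\mathbb{D}}$, both $fh$ and $fh_r$ lie in $H^2(\mathbb{D})$.

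First I would factor $f=Pg$ on $R_2\overline{\mathbb{D}}$, where $P(z)=\prod_{j=1}^{d}(z-z_j)$ collects the zeros $z_j\in R_1\mathbb{D}$ of $f$ (with multiplicity) and $g=f/P$ is holomorphic and zero-free on $R_2\overline{\mathbb{D}}$ (there is no zero in the annulus, so every zero in $R_2\overline{\mathbb{D}}$ lies in $R_1\mathbb{D}$ and is cancelled). For the zero-free factor, the maximum and minimum modulus principles give $m_g\le|g(z)|\le M_g$ for all $|z|\le R_2$, where $M_g=\max_{|z|=R_2}|g|$ and $m_g=\min_{|z|=R_2}|g|$; hence $|g(z)/g(rz)|\le M_g/m_g$ for $|z|\le 1$. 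Writing $fh_r=\frac{g(z)}{g(rz)}\,P(z)(gh)_r$ and comparing moduli on $\mathbb{T}$ gives
\[
\|fh_r\|\le\frac{M_g}{m_g}\,\big\|P\cdot(gh)_r\big\|.
\]
On $|z|=R_2$ one has $(R_2-R_1)^d<|P(z)|<(R_1+R_2)^d$ pointwise, whence $M_g\le M/(R_2-R_1)^d$ and $m_g\ge m/(R_1+R_2)^d$, so $M_g/m_g\le (R_1+R_2)^dM/\big((R_2-R_1)^dm\big)$. It therefore remains to prove the purely polynomial dilation estimate $\|P w_r\|\le 2^d\|Pw\|$ for every $w\in H^2(\mathbb{D})$ (to be applied with $w=gh$), which supplies the missing factor $2^d$.

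This polynomial estimate is where the main difficulty lies. One is tempted to bound $Pw_r=\frac{P(z)}{P(rz)}(Pw)_r$ by the supremum of $|P(z)/P(rz)|$ on $\mathbb{T}$, but this ratio is \emph{not} uniformly controlled: if some $z_j$ lies inside $\mathbb{D}$ with $|z_j|$ close to $r$, then $P(rz)$ nearly vanishes on $\mathbb{T}$ and the naive supremum blows up, whereas the asserted constant stays bounded. The resolution is to push the interior zeros out of the disk. After factoring out any zeros at the origin (multiplication by $z$ is isometric, so these cost nothing), replace each remaining interior zero $z_j$ by its reflection $z_j^{*}=1/\overline{z_j}$; since $|z-z_j|=|z_j|\,|z-z_j^{*}|$ on $\mathbb{T}$, the ratios $\|Pw_r\|/\|Pw\|$ and $\|P^{*}w_r\|/\|P^{*}w\|$ coincide, where $P^{*}=\prod_j(z-w_j)$ now has all roots $|w_j|\ge 1$. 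For such $P^{*}$ I would bound the dilation ratio factor by factor: on $\mathbb{T}$,
\[
\Big|\frac{P^{*}(z)}{P^{*}(rz)}\Big|=\prod_{j}\Big|1+\frac{(1-r)z}{rz-w_j}\Big|\le\prod_j\Big(1+\frac{1-r}{|w_j|-r}\Big)\le 2^{d},
\]
because $|rz-w_j|\ge|w_j|-r\ge 1-r$ for $|z|=1$. Hence $\|P^{*}w_r\|\le 2^{d}\|(P^{*}w)_r\|\le 2^{d}\|P^{*}w\|$, and transferring back through the reflection yields $\|Pw_r\|\le 2^{d}\|Pw\|$.

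I expect the genuine obstacle to be precisely this control of $|P(z)/P(rz)|$ for zeros sitting inside (or on) the unit circle: the per-factor estimate only closes once the zeros have been reflected into $\{|w|\ge 1\}$, and the improvement of the crude bound $\frac{1+|w_j|}{|w_j|-r}$ to the clean $1+\frac{1-r}{|w_j|-r}\le 2$ (obtained by writing $\frac{z-w_j}{rz-w_j}=1+\frac{(1-r)z}{rz-w_j}$) is what produces the sharp factor $2$ per zero instead of a constant degenerating as $r\to 1^{-}$. Combining the three ingredients gives $\|fh_r\|\le 2^{d}\,\frac{(R_1+R_2)^dM}{(R_2-R_1)^dm}\,\|fh\|$, as claimed.
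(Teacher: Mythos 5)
Your proof is correct and follows essentially the same route as the paper: factor $f=pg$ with $p$ collecting the zeros in $R_1\mathbb{D}$, bound $|g|$ above and below on $\mathbb{T}$ by the maximum and minimum modulus principles on $\{|z|=R_2\}$, and control the polynomial dilation ratio after moving the interior zeros outside the disk. The only cosmetic difference is that the paper packages your reflection step as ``replace $p$ by its outer factor $\tilde p$'' and quotes the inequality $|z-a|\le 2|rz-a|$ (for $|z|=1$, $|a|\ge 1$) from [NGN], which is exactly the per-factor bound you rederive.
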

\begin{proof}Let $z_1$, $\cdots$, $z_d$ be the zeros of $f$ in $R_1\mathbb{D}$, and put $$p(z)=(z-z_1)\cdots(z-z_d).$$ Then
$$(R_2-R_1)^d\leq|p|\leq(R_1+R_2)^d$$
on the circle $\{|z|=R_2\}$.
There exists a holomorphic function $g$ on $R\,\mathbb{D}$, such that
$f=pg$ and $g$ has no zero in the closed disk
$R_2\overline{\mathbb{D}}$. For each $z\in\mathbb{T}$, by the maximum modulus principle, $$|g(z)|\leq\max\limits_{|z|=R_2}|g(z)|\leq\frac{M}{(R_2-R_1)^d},$$
and
$$|g(z)|\geq\min\limits_{|z|=R_2}|g(z)|\geq\frac{m}{(R_1+R_2)^d},$$
Let $\tilde{p}$ be the outer factor of $p$ in $H^2(\mathbb{D})$. Note that
$$|z-a|\leq|z-rz|+|rz-a|\leq1-r+|rz-a|\leq2|rz-a|$$
for any $z\in\mathbb{T}$, $|a|\geq1$ and $0<r<1$ (this classical inequality comes from \cite{NGN}, also see \cite{Ge}).
Then $|\tilde{p}|\leq2^d|\tilde{p}_r|$ on $\mathbb{T}$ for $0<r<1$, and
we have
\begin{align} \label{} \int_{\mathbb{T}}|p(z)h(rz)|^2\mathrm{d}m_1(z)
  &=\int_{\mathbb{T}}|\tilde{p}(z)h(rz)|^2\mathrm{d}m_1(z)\notag\\
  &\leq4^d\int_{\mathbb{T}}|\tilde{p}(rz)h(rz)|^2\mathrm{d}m_1(z)\notag\\
  &\leq4^d\int_{\mathbb{T}}|\tilde{p}(z)h(z)|^2\mathrm{d}m_1(z)\notag\\
  &=4^d\int_{\mathbb{T}}|p(z)h(z)|^2\mathrm{d}m_1(z),\label{pol}
\end{align}
here the second inequality follows from the subharmonicity of $|\tilde{p}h|$.
Therefore, by (\ref{pol})
\begin{equation*}\begin{split}\|fh_r\|_{H^2(\mathbb{D})}^2
  &=\int_{\mathbb{T}}|p(z)g(z)h(rz)|^2\mathrm{d}m_1(z)\\
  &\leq\left(\frac{M}{(R_2-R_1)^d}\right)^2\int_{\mathbb{T}}|p(z)h(rz)|^2\mathrm{d}m_1(z)\\
  &\leq\left(\frac{2^d\,M}{(R_2-R_1)^d}\right)^2\int_{\mathbb{T}}|p(z)h(z)|^2\mathrm{d}m_1(z)\\
  &\leq\left(\frac{2^d(R_1+R_2)^d\,M}{(R_2-R_1)^d\,m}\right)^2\int_{\mathbb{T}}|p(z)g(z)h(z)|^2\mathrm{d}m_1(z)\\
  &=\left(\frac{2^d(R_1+R_2)^d\,M}{(R_2-R_1)^d\,m}\right)^2\|fh\|_{H^2(\mathbb{D})}^2,\end{split}
\end{equation*} which completes the proof.
\end{proof}

By Lemma \ref{l2}, for any nonzero holomorphic function $f$  on a neighborhood of $\overline{\mathbb{D}}$, we have $$\mu(f)=\sup_{\substack{h\in H^2(\mathbb{D})\setminus\{0\} \\ 0<r<1 }}\frac{\|fh_r\|_{H^2(\mathbb{D})}}{\|fh\|_{H^2(\mathbb{D})}}<\infty.$$
Furthermore, we show that

\begin{cor}\label{c3} For each $R>1$, $\mu$ is locally bounded on $H^\infty(R\,\mathbb{D})\setminus\{0\}$.
\end{cor}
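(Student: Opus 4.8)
The plan is to fix an arbitrary $f_0\in H^\infty(R\,\mathbb{D})\setminus\{0\}$ and produce an explicit sup-norm neighborhood of $f_0$ on which the bound of Lemma \ref{l2} holds with constants that do not depend on the particular function in the neighborhood. First I would choose radii $1<R_1<R_2<R$ so that $f_0$ has no zero in the closed annulus $A=\{z:R_1\le|z|\le R_2\}$. This is possible because $f_0$ is holomorphic and not identically zero on $R\,\mathbb{D}$, so its zeros are isolated and, being finite in number on each compact annulus $\{a\le|z|\le b\}\subset R\,\mathbb{D}$, their moduli form a discrete subset of $(0,R)$; hence some subinterval $[R_1,R_2]\subset(1,R)$ avoids all of them. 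Set $\delta=\min_{z\in A}|f_0(z)|$, which is strictly positive since $A$ is compact and $f_0$ is zero-free on it.

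Next I would take the neighborhood $U=\{f\in H^\infty(R\,\mathbb{D}):\|f-f_0\|_\infty<\delta/2\}$, where $\|\cdot\|_\infty$ is the sup norm over $R\,\mathbb{D}$; since $A\subset R\,\mathbb{D}$, this controls $f-f_0$ on $A$. For $f\in U$ and $z\in A$ we get $|f(z)|\ge|f_0(z)|-|f(z)-f_0(z)|\ge\delta/2>0$, so $f$ is zero-free on $A$ and Lemma \ref{l2} applies to $f$ with these radii. Moreover, writing $M=\max_{|z|=R_2}|f|$, $m=\min_{|z|=R_2}|f|$, and $d$ for the number of zeros of $f$ in $R_1\mathbb{D}$, I would verify that these quantities stay under control: with $M_0=\max_{|z|=R_2}|f_0|$ one has $M\le M_0+\delta/2$, and $m\ge\delta/2$ (here one uses that the circle $|z|=R_2$ lies in $A$, so $|f_0|\ge\delta$ there). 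The one genuinely non-algebraic point is that $d$ is constant on $U$: since $|f-f_0|<\delta\le|f_0|$ on the circle $|z|=R_1$, Rouch\'e's theorem shows that $f$ and $f_0$ have exactly the same number $d_0$ of zeros inside $R_1\mathbb{D}$.

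Combining these facts with Lemma \ref{l2} yields, for every $f\in U$,
$$\mu(f)\le\frac{2^{d_0}(R_1+R_2)^{d_0}\,(M_0+\delta/2)}{(R_2-R_1)^{d_0}\,(\delta/2)},$$
a finite constant independent of $f\in U$, which is precisely the asserted local boundedness of $\mu$. I expect the main obstacle to be the two stability statements in the second paragraph: first, that a zero-free annulus for $f_0$ can be chosen at all (handled by isolation and finiteness of the zeros on compact annuli), and second, that the zero count $d$ in $R_1\mathbb{D}$ does not jump for nearby $f$ (handled by Rouch\'e, using the lower bound for $|f_0|$ on the bounding circle). Everything else is a direct substitution into the estimate of Lemma \ref{l2}.
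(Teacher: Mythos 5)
Your proposal is correct and follows essentially the same route as the paper: choose a zero-free closed annulus $R_1\le|z|\le R_2$ for the base function, take the sup-norm ball of radius half the minimum modulus on that annulus, use Rouch\'e's theorem to freeze the zero count in $R_1\mathbb{D}$, and bound $M$ and $m$ uniformly before substituting into Lemma \ref{l2}. The only cosmetic difference is that the paper bounds $M$ by $\|f\|_{H^\infty(R\,\mathbb{D})}+m/2$ rather than by $\max_{|z|=R_2}|f_0|+\delta/2$, which changes nothing of substance.
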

\begin{proof} Take any nonzero function $f$ in $H^\infty(R\,\mathbb{D})$, we want to show that there exists a positive number $\varepsilon$ and a constant $C>0$, such that
for any $h\in H^2(\mathbb{D})$ and $0<r<1$,
$\|gh_r\|_{H^2(\mathbb{D})}
  \leq C\|gh\|_{H^2(\mathbb{D})}
$
whenever $g\in H^\infty(R\,\mathbb{D})$ and $\|g-f\|_{H^\infty(R\,\mathbb{D})}<\varepsilon$.

Choose $1<R_1<R_2<R$ such that $f$ has no zero in the closed annulus $\{z\in\mathbb{C}:R_1\leq|z|\leq R_2\}$. Put $$m=\min\limits_{R_1\leq|z|\leq R_2}|f(z)|,$$ and let $d$ denote the number of zeros of $f$ in $R_1\mathbb{D}$, counting multiplicity. By Rouch\'{e}'s theorem, if $g\in H^\infty(R\,\mathbb{D})$ and $\|g-f\|_{H^\infty(R\,\mathbb{D})}<\frac m2$, then $g$ has no zero in $\{z\in\mathbb{C}:R_1\leq|z|\leq R_2\}$ and $g$ has $d$ zeros in $R_1\mathbb{D}$.
Set $$C=\frac{2^{d+1}(R_1+R_2)^d(\|f\|_{H^\infty(R\,\mathbb{D})}+\frac m2)}{(R_2-R_1)^d\,m}.$$
 By Lemma \ref{l2}, for such $g$ and any $h\in H^2(\mathbb{D})$, $0<r<1$,
 $$\|gh_r\|_{H^2(\mathbb{D})}
  \leq C\|gh\|_{H^2(\mathbb{D})}.
$$
The proof is complete.
\end{proof}
\vskip2mm


%
%

Put  
$G_r(\zeta)=F(r\zeta)$
for $G$ holomorphic on $\mathbb{D}_2^\infty$ and $0<r<1$.

\begin{lem}\label{p1} Let $F$ be a function in $\mathbf{A}_{R,\infty}$ for some $R>1$ and $F(0)\neq0$. If $G$ is a holomorphic function on $\mathbb{D}_2^\infty$ such that
$FG\in \mathbf{H}_\infty^2$, then for each $0<r<1$, $FG_r\in \mathbf{H}_\infty^2$, and
$FG_r\stackrel{\mathrm{w}}{\rightarrow}FG\ (r\rightarrow1^-)$ in $\mathbf{H}_\infty^2$.
\end{lem}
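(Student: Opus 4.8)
The plan is to establish both assertions from a single uniform estimate: I will produce a constant $C_0$, independent of $r\in(0,1)$, with $\|FG_r\|_{\mathbf{H}_\infty^2}\le C_0\,\|FG\|_{\mathbf{H}_\infty^2}$, and then upgrade the resulting norm-boundedness to weak convergence by checking convergence of Taylor coefficients. Since $(FG_r)_{(n)}=F_{(n)}\cdot(G_{(n)})_r$ is holomorphic on a neighborhood of $\overline{\mathbb{D}}^n$ (the dilation $(G_{(n)})_r$ extends holomorphically past $\overline{\mathbb{D}}^n$), membership $FG_r\in\mathbf{H}_\infty^2$ together with the norm bound will follow from Lemma~\ref{l4} once I control $\sup_n\|(FG_r)_{(n)}\|_{H^2(\mathbb{D}^n)}$.

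First I would build a compact family of one-variable slices of $F$. Fix $1<R'<R$ and, for $\xi\in\overline{\mathbb{D}}^\infty$, set $\Phi_\xi(z)=F(z\xi)$ for $z\in R'\mathbb{D}$. Each $\Phi_\xi$ is holomorphic on $R'\mathbb{D}$ (a uniform limit of the polynomials $P_k(z\xi)$ approximating $F$) and satisfies $\Phi_\xi(0)=F(0)\neq0$, so $\Phi_\xi\in H^\infty(R'\mathbb{D})\setminus\{0\}$. Using the uniform continuity of $F$ on the compact metric space $R\,\overline{\mathbb{D}}^\infty$ together with the estimate \eqref{dist}, the map $\xi\mapsto\Phi_\xi$ is continuous from $\overline{\mathbb{D}}^\infty$ into $H^\infty(R'\mathbb{D})$; hence its image is a compact subset of $H^\infty(R'\mathbb{D})\setminus\{0\}$. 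Since $\mu$ is locally bounded there by Corollary~\ref{c3}, I obtain $C_0:=\sup_{\xi}\mu(\Phi_\xi)<\infty$.

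The heart of the argument is the estimate $\|(FG_r)_{(n)}\|_{H^2(\mathbb{D}^n)}\le C_0\,\|(FG)_{(n)}\|_{H^2(\mathbb{D}^n)}$, obtained by slicing along complex lines through the origin. Averaging over a rotation $z\in\mathbb{T}$ (which preserves $m_n$) and applying Fubini gives, with $\phi_\zeta(z)=F_{(n)}(z\zeta)=\Phi_{(\zeta,0,\cdots)}(z)$ and $\psi_\zeta(z)=G_{(n)}(z\zeta)$,
\[
\|(FG_r)_{(n)}\|_{H^2(\mathbb{D}^n)}^2=\int_{\mathbb{T}^n}\big\|\phi_\zeta\,(\psi_\zeta)_r\big\|_{H^2(\mathbb{D})}^2\,\mathrm{d}m_n(\zeta),
\]
while the same computation with $r=1$ gives $\|(FG)_{(n)}\|^2=\int_{\mathbb{T}^n}\|\phi_\zeta\psi_\zeta\|_{H^2(\mathbb{D})}^2\,\mathrm{d}m_n(\zeta)$. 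Since $\phi_\zeta$ lies in the compact family above, $\mu(\phi_\zeta)\le C_0$, so it would suffice to bound each slice by $\|\phi_\zeta(\psi_\zeta)_r\|\le C_0\|\phi_\zeta\psi_\zeta\|$ via the one-variable Lemma~\ref{l2} with $f=\phi_\zeta$. This is exactly where the main obstacle lies: Lemma~\ref{l2} requires the dilated factor to lie in $H^2(\mathbb{D})$, whereas the slice $\psi_\zeta$ of the merely holomorphic function $G$ need not be in $H^2(\mathbb{D})$ (only the product $\phi_\zeta\psi_\zeta$, being a slice of $(FG)_{(n)}\in H^2(\mathbb{D}^n)$, is in $H^2(\mathbb{D})$ for a.e. $\zeta$). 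To circumvent this I would apply Lemma~\ref{l2} to the already-smoothed factor $(\psi_\zeta)_\rho\in H^\infty(\mathbb{D})$ for $r<\rho<1$, with dilation parameter $r/\rho$, yielding $\|\phi_\zeta(\psi_\zeta)_r\|\le\mu(\phi_\zeta)\,\|\phi_\zeta(\psi_\zeta)_\rho\|$, and then let $\rho\to1^-$. The delicate point is the limit $\|\phi_\zeta(\psi_\zeta)_\rho\|\to\|\phi_\zeta\psi_\zeta\|$; I would prove it by writing $\phi_\zeta(\psi_\zeta)_\rho=\frac{\phi_\zeta}{(\phi_\zeta)_\rho}\,(\phi_\zeta\psi_\zeta)_\rho$, observing that the weight $\phi_\zeta/(\phi_\zeta)_\rho$ stays uniformly bounded on $\mathbb{T}$ (near the finitely many boundary zeros of $\phi_\zeta$ this uses the elementary inequality $|z-a|\le2|\rho z-a|$ already invoked in the proof of Lemma~\ref{l2}) and tends to $1$ a.e., so that a dominated-convergence argument applies together with $(\phi_\zeta\psi_\zeta)_\rho\to\phi_\zeta\psi_\zeta$ in $H^2(\mathbb{D})$. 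Integrating the resulting slice bound over $\zeta$ and taking $\sup_n$ then delivers $\|FG_r\|_{\mathbf{H}_\infty^2}\le C_0\|FG\|_{\mathbf{H}_\infty^2}$ for every $r\in(0,1)$.

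Finally, for the weak convergence, the uniform estimate shows that $\{FG_r\}_{0<r<1}$ is bounded in the Hilbert space $\mathbf{H}_\infty^2$, so it suffices to test against the total family of monomials $\zeta^\alpha$. The coefficient $\langle FG_r,\zeta^\alpha\rangle$ is a finite sum $\sum_{\beta+\gamma=\alpha}a_\beta\,b_\gamma\,r^{|\gamma|}$, where $a_\beta,b_\gamma$ are the Taylor coefficients of $F$ and $G$; as $r\to1^-$ this converges to $\langle FG,\zeta^\alpha\rangle$. Boundedness together with this coefficientwise convergence then gives $FG_r\stackrel{\mathrm{w}}{\rightarrow}FG$ in $\mathbf{H}_\infty^2$, completing the proof.
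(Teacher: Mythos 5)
Your proposal is correct, and its skeleton coincides with the paper's: the same compact family of one\-/variable slices $f_\xi(z)=F(z\xi)$, the same appeal to Corollary~\ref{c3} to extract a uniform bound $C_0$ on $\mu$ over that family, the same slicing of the $H^2(\mathbb{D}^n)$-norm along complex lines through the origin, Lemma~\ref{l4} to conclude membership, and norm-boundedness plus coefficientwise convergence for the weak limit. The one point where you genuinely diverge is precisely the step you flag as the obstacle, namely that the slice $\psi_\zeta$ of the merely holomorphic $G$ need not lie in $H^2(\mathbb{D})$. The paper circumvents this by dilating $F$ as well: it estimates $\|F_{(n)s}G_{(n)r}\|$ for $r<s<1$, so that the relevant slice of $G$ is $g_{s\zeta}$ with $s<1$, automatically holomorphic across $\overline{\mathbb{D}}$, and Lemma~\ref{l2} applies directly with dilation parameter $r/s$; the uniform bound $\|F_{(n)s}G_{(n)r}\|\leq C\|FG\|$ then yields $F_{(n)}G_{(n)r}$ as a weak limit in $H^2(\mathbb{D}^n)$ as $s\to1^-$, with no pointwise boundary analysis required. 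You instead keep $F$ fixed and regularize the one\-/variable $G$-slice by an auxiliary radius $\rho$, which forces the extra dominated-convergence argument with the weight $\phi_\zeta/(\phi_\zeta)_\rho$. That argument is sound --- the weight is controlled near the finitely many zeros of $\phi_\zeta$ in $\overline{\mathbb{D}}$ by the same inequality $|z-a|\leq 2|\rho z-a|$ used in Lemma~\ref{l2}, strictly interior zeros cause no trouble once $\rho$ exceeds their moduli, and the exceptional $\zeta$ for which $\phi_\zeta\psi_\zeta\notin H^2(\mathbb{D})$ form a null set --- but it is the more delicate of the two routes. In short, the paper's two-parameter dilation buys a cleaner limit passage (weak compactness in $s$) at the cost of one more index to track, while your version keeps the estimate at a single $r$ but pays with the boundary-weight analysis on each slice.
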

\begin{proof} Put $$f_\zeta(z)=F(z\zeta),\quad z\in R\,\mathbb{D},\,\zeta\in\overline{\mathbb{D}}^\infty.$$
It is clear that each $f_\zeta$ is a nonzero bounded holomorphic function on $R\,\mathbb{D}$.
Recall that $R\,\overline{\mathbb{D}}^\infty$
is a compact metric space with the metric $\mathrm{d}(\zeta,\xi)$ defined previously.
By (\ref{dist}) and the uniformly continuity of $F$ on $R\,\overline{\mathbb{D}}^\infty$, the map
\begin{eqnarray*}\ell:\overline{\mathbb{D}}^\infty& \rightarrow &
  H^\infty(R\,\mathbb{D})\setminus\{0\},\\
  \zeta& \mapsto & 
  f_\zeta\end{eqnarray*}
is continuous.
Therefore $\ell(\overline{\mathbb{D}}^\infty)$ is a compact subset of $H^\infty(R\,\mathbb{D})\setminus\{0\}$. It follows from Corollary \ref{c3} that $\mu$ is bounded on $\ell(\overline{\mathbb{D}}^\infty)$.
That is to say, there is a constant $C>0$ such that for any $\zeta\in\overline{\mathbb{D}}^\infty$, $h\in H^2(\mathbb{D})$ and $0<r<1$,
\begin{equation}\label{e3}
\|f_\zeta h_r\|_{H^2(\mathbb{D})}
  \leq C\|f_\zeta h\|_{H^2(\mathbb{D})}.
\end{equation}
Set
$$g_\zeta(z)=G(z\zeta),\quad z\in\overline{\mathbb{D}},\,\zeta\in\mathbb{D}_2^\infty,$$
and for each $n\in\mathbb{N}$ we  identify $\overline{\mathbb{D}}^n$ with the closed subset $\overline{\mathbb{D}}^n\times\{0\}\times\cdots$ of $\overline{\mathbb{D}}^\infty$. If $\zeta\in\mathbb{T}^n$ and $0<r<1$, then
$g_{r\zeta}$ is holomorphic on some neighborhood of $\overline{\mathbb{D}}$.
It follows that  for any $n\in\mathbb{N}$ and $0<r<s<1$,
\begin{equation*}\begin{split}\|F_{(n)s}G_{(n)r}\|_{H^2(\mathbb{D}^n)}^2&=\int_{\mathbb{T}^n}|F(s\zeta)G(r\zeta)|^2\mathrm{d}m_n(\zeta)\\
&=\int_{\mathbb{T}}\left(\int_{\mathbb{T}^n}|F(sz\zeta)G(rz\zeta)|^2\mathrm{d}m_n(\zeta)\right)\mathrm{d}m_1(z)\\
&=\int_{\mathbb{T}^n}\left(\int_{\mathbb{T}}|f_{s\zeta}(z)(g_{s\zeta})_{\frac rs}(z)|^2\mathrm{d}m_1(z)\right)\mathrm{d}m_n(\zeta)\\
&\leq C^2\int_{\mathbb{T}^n}\left(\int_{\mathbb{T}}|f_{s\zeta}(z)g_{s\zeta}(z)|^2\mathrm{d}m_1(z)\right)\mathrm{d}m_n(\zeta)\\
&=C^2\int_{\mathbb{T}}\left(\int_{\mathbb{T}^n}|F(sz\zeta)G(sz\zeta)|^2\mathrm{d}m_n(\zeta)\right)\mathrm{d}m_1(z)\\
&=C^2\int_{\mathbb{T}^n}|F(s\zeta)G(s\zeta)|^2\mathrm{d}m_n(\zeta)\\
&\leq C^2\|FG\|_{\mathbf{H}_\infty^2}^2.\end{split}
\end{equation*}
Here the first inequality follows from (\ref{e3}), the last inequality follows from \cite[pp. 1610]{Ni1}.
Therefore $F_{(n)s}G_{(n)r}$ converges weakly to $F_{(n)}G_{(n)r}$ as $s$ approaches $1$ from below, and for any fixed $r$, we have
$$\|(FG_r)_{(n)}\|_{H^2(\mathbb{D}^n)}=\|F_{(n)}G_{(n)r}\|_{H^2(\mathbb{D}^n)}\leq C\|FG\|_{\mathbf{H}_\infty^2}.$$
By Lemma \ref{l4}, $FG_r\in \mathbf{H}_\infty^2$ and $\|FG_r\|_{\mathbf{H}_\infty^2}\leq C\|FG\|_{\mathbf{H}_\infty^2}$.
The rest of the proof is trivial.
\end{proof}

Now we proceed to present the proof of Theorem \ref{t5}.

\noindent  \textbf{Proof of Theorem \ref{t5}.} Suppose that $F$ has no zero in $\mathbb{D}^\infty_2$ and set $G=1/F$ on $\mathbb{D}^\infty_2$. Then $G$ is a holomorphic function on $\mathbb{D}^\infty_2$ and $FG=1$. By Lemma \ref{p1},
for each $0<r<1$, $F/F_r=FG_r\in \mathbf{H}_\infty^2$, and
 $F/F_r\stackrel{\mathrm{w}}{\rightarrow}1\ (r\rightarrow1^-)$ in $\mathbf{H}_\infty^2$.
Now we claim that for any $0<r<1$, $F/F_r$ belongs to
the invariant subspace $[F]$ generated by $F$.
 It suffices to prove that for each $0<r<1$, $F$ has no zero in the compact subset $r\,\overline{\mathbb{D}}^\infty$
of $\overline{\mathbb{D}}^\infty$, and hence $1/F_r$ is bounded on $\mathbb{D}^\infty$. To see this, for any $\zeta\in r\overline{\mathbb{D}}^\infty\setminus\{0\}$, put
$$f(z)=F(z\frac{\zeta}{\|\zeta\|_{l^\infty}}),\quad z\in\mathbb{D}$$ and
$$f_n(z)=F_{(n)}(z\frac{\zeta}{\|\zeta\|_{l^\infty}}),\quad z\in\mathbb{D}$$ for $n\in\mathbb{N}$. Then $\{f_n\}_{n\in\mathbb{N}}$ is uniformly bounded  and  each $f_n$ has no zero
in $\mathbb{D}$. Since $F$ is continuous on $\overline{\mathbb{D}}^\infty$, $\{f_n\}_{n\in\mathbb{N}}$ converges pointwise to $f$ on  $\mathbb{D}$. This implies that $\{f_n\}_{n\in\mathbb{N}}$ converges uniformly to $f$ on compact subsets of $\mathbb{D}$. Note that $f(0)=F(0)\neq0$.
It follows from Hurwitz's theorem that $f$ also
has no zero in $\mathbb{D}$. In particular, $$F(\zeta)=f(\|\zeta\|_{l^\infty})\neq0.$$
We have proved the claim, and thus complete the proof of the theorem.
$\hfill \square $
\vskip2mm

\begin{rem}\label{r1} Theorem \ref{t5} fails for $R=1$. In fact, put $$F(\zeta)=(1-\zeta_1)\exp\left(\frac{\zeta_1+1}{\zeta_1-1}\right).$$
Then $F$ is a function in $\mathbf{A}_\infty$ without zeros in $\mathbb{D}_2^\infty$. On the other hand, since the function $$(1-z)\exp\left(\frac{z+1}{z-1}\right)$$
has a nontrivial inner factor, $F$ cannot be cyclic
in $\mathbf{H}_\infty^2$ (see Lemma \ref{cyclicity in H2(S)}).
\end{rem}


Let us see the following application of Theorem \ref{t5}.
\begin{cor} Suppose $F=\sum_{n=1}^\infty a_n\zeta^{\alpha_{(n)}}\in \mathbf{H}_\infty^2$. If there exists $\varepsilon>0$ such that
the coefficients of $F$ satisfies the growth condition:
$$\sum_{n=1}^\infty |a_n| n^\varepsilon<\infty,$$
then $F\in \mathbf{A}_{R,\infty}$ where $R=2^\varepsilon$, and therefore
$F$ is cyclic
in $\mathbf{H}_\infty^2$ if and only if $F$
has no zero in $\mathbb{D}_2^\infty$. In particular, if $a_n=\hat{f}(n)\ (n\in\mathbb{N})$, the Fourier coefficients of  a holomorphic function $f$ on some neighborhood of the closed unit disk, then $F$ is cyclic
in $\mathbf{H}_\infty^2$ if and only if $F$
has no zero in $\mathbb{D}_2^\infty$.
\end{cor}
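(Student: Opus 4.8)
The plan is to verify the membership $F\in\mathbf{A}_{R,\infty}$ directly from the growth hypothesis and then quote Theorem \ref{t5}; the heart of the argument is a single uniform estimate on the monomials $\zeta^{\alpha(n)}$ over the enlarged polydisk $R\,\overline{\mathbb{D}}^\infty$. Writing the prime factorization $n=p_1^{\alpha_1}\cdots p_m^{\alpha_m}$, so that $\zeta^{\alpha(n)}=\zeta_1^{\alpha_1}\cdots\zeta_m^{\alpha_m}$, I would first note that for $\zeta\in R\,\overline{\mathbb{D}}^\infty$ (i.e.\ $|\zeta_j|\le R$ for all $j$) one has $|\zeta^{\alpha(n)}|\le R^{\alpha_1+\cdots+\alpha_m}$. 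Since every prime is at least $2$, the factorization gives $2^{\alpha_1+\cdots+\alpha_m}\le p_1^{\alpha_1}\cdots p_m^{\alpha_m}=n$, whence $\alpha_1+\cdots+\alpha_m\le\log_2 n$. Substituting $R=2^\varepsilon$ (which is $>1$ since $\varepsilon>0$) then yields the clean bound
$$\sup_{\zeta\in R\,\overline{\mathbb{D}}^\infty}|\zeta^{\alpha(n)}|\le R^{\log_2 n}=2^{\varepsilon\log_2 n}=n^\varepsilon.$$

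With this estimate the membership is immediate. Each partial sum $\sum_{n=1}^{N}a_n\zeta^{\alpha(n)}$ is a polynomial in the coordinate functions and hence lies in $\mathbf{A}_{R,\infty}$, while the tail is controlled uniformly on $R\,\overline{\mathbb{D}}^\infty$ by $\sum_{n>N}|a_n|\,n^\varepsilon$, which tends to $0$ as $N\to\infty$ by the hypothesis $\sum_n|a_n|n^\varepsilon<\infty$. Thus $\sum_n a_n\zeta^{\alpha(n)}$ converges in the uniform norm of $C(R\,\overline{\mathbb{D}}^\infty)$ to $F$, and since $\mathbf{A}_{R,\infty}$ is a closed subalgebra, $F\in\mathbf{A}_{R,\infty}$. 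As $R=2^\varepsilon>1$, Theorem \ref{t5} applies verbatim and delivers the equivalence: $F$ is cyclic in $\mathbf{H}_\infty^2$ if and only if $F$ has no zero in $\mathbb{D}_2^\infty$.

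For the final ``in particular'' assertion it suffices to reduce it to the growth condition already treated. If $f$ is holomorphic on a neighborhood of $\overline{\mathbb{D}}$, then it is holomorphic on some disk $\rho\,\mathbb{D}$ with $\rho>1$, and Cauchy's estimate on a circle of radius $\sigma$ with $1<\sigma<\rho$ gives geometric decay $|\hat f(n)|\le M\sigma^{-n}$, where $M=\max_{|z|=\sigma}|f(z)|$. Since geometric decay dominates any polynomial factor, $\sum_n|\hat f(n)|\,n^\varepsilon\le M\sum_n n^\varepsilon\sigma^{-n}<\infty$ for every $\varepsilon>0$; choosing any such $\varepsilon$ (say $\varepsilon=1$) reduces this case to the first part. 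I would also note in passing that the growth condition forces $\{a_n\}\in\ell^1\subset\ell^2$, so that $F\in\mathbf{H}_\infty^2$ holds automatically.

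I expect no serious obstacle here. The only point requiring genuine care is the combinatorial estimate $\alpha_1+\cdots+\alpha_m\le\log_2 n$ and the resulting conversion of the radius $R=2^\varepsilon$ into the polynomial bound $n^\varepsilon$; this is precisely what aligns the growth exponent of the coefficients with the radius of the enlarged polydisk and makes $R=2^\varepsilon$ the natural choice. Everything else is a routine uniform-convergence argument together with a direct citation of Theorem \ref{t5}.
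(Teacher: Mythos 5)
Your proposal is correct and follows essentially the same route as the paper: the same key estimate $\sup_{\zeta\in R\,\overline{\mathbb{D}}^\infty}|\zeta^{\alpha(n)}|\le n^\varepsilon$ for $R=2^\varepsilon$ (the paper obtains it factor-by-factor from $2\le p_j$, you via $\alpha_1+\cdots+\alpha_m\le\log_2 n$, which is the same computation), followed by the same uniform tail estimate to place $F$ in $\mathbf{A}_{R,\infty}$ and an appeal to Theorem \ref{t5}. Your Cauchy-estimate reduction of the ``in particular'' clause is exactly the routine step the paper leaves to the reader.
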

\begin{proof}
Put $R=2^\varepsilon>1$, and for any $n\in\mathbb{N}$, let
$n=p_1^{\alpha_1}p_2^{\alpha_2}\cdots p_m^{\alpha_m}$
be its  prime factorization.
Thus for each $\zeta\in R\,\overline{\mathbb{D}}^\infty$,
$$
 |\zeta^{\alpha(n)}|\leq R^{\alpha_1+\cdots+\alpha_m}\leq p_1^{\varepsilon\alpha_1}p_2^{\varepsilon\alpha_2}\cdots p_m^{\varepsilon\alpha_m}=n^\varepsilon.
$$
Therefore for any $\zeta\in R\,\overline{\mathbb{D}}^\infty$, we have
$$|F(\zeta)-\sum_{n=1}^N a_n\zeta^{\alpha(n)}|=\left|\sum_{n=N+1}^\infty a_n\zeta^{\alpha(n)}\right|\leq\sum_{n=N+1}^\infty \left|a_n\right|\, \left|\zeta^{\alpha(n)}\right|\leq\sum_{n=N+1}^\infty |a_n|n^\varepsilon.$$
A simple estimation shows  $F\in \mathbf{A}_{R,\infty}$.
The rest of the proof is easy.
\end{proof}

In what follows let us see how much  cyclic vectors are in the sense of Baire's category.
Let $\mathcal{F}$ denote all nonvanishing functions  in $\mathbf{H}_\infty^2$, together with the zero function,
 that is,  $$\mathcal{F}=\{0\}\cup\{f\in \mathbf{H}_\infty^2: f(\zeta)\not=0, \zeta\in\mathbb{D}_2^\infty\}.$$
We  will  apply Theorem \ref{t5}  and an idea in \cite{Sh} to show that the following proposition is true.
\begin{prop} We have
\noindent\begin{itemize}
\item[(1)]   $ \mathcal{F}$ is a closed subset of $\mathbf{H}_\infty^2$;
 \item[(2)]   the set $\mathcal{C}$ of all cyclic vectors in $\mathbf{H}_\infty^2$ is dense in $\mathcal{F}$, and of the second category in $\mathcal{F}$;
\item[(3)]  $\mathcal{F}\setminus\mathcal{C}$ is of the first category in $\mathcal{F}$.
\end{itemize}
\end{prop}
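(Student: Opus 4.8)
The plan is to deduce (2) and (3) from (1) together with two facts: that $\mathcal{C}$ is dense in $\mathcal{F}$ and that $\mathcal{C}$ is a $G_\delta$ subset of $\mathcal{F}$. Recall first that every cyclic vector is nonvanishing, so $\mathcal{C}\subseteq\mathcal{F}$. Once $\mathcal{F}$ is known to be closed (part (1)), it is a complete metric space in the inherited metric, hence a Baire space; a dense $G_\delta$ in a Baire space is comeager, so $\mathcal{C}$ is of the second category in $\mathcal{F}$ and $\mathcal{F}\setminus\mathcal{C}$ of the first category, giving (2) and (3) at once. To see the $G_\delta$ structure, I would use that $1$ is itself cyclic, so $F$ is cyclic if and only if $1\in[F]$, i.e. if and only if $\Phi(F):=\inf_{p}\|pF-1\|_{\mathbf{H}_\infty^2}=0$, the infimum taken over polynomials $p$. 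For each fixed $p$ the map $F\mapsto\|pF-1\|_{\mathbf{H}_\infty^2}$ is continuous (multiplication by a polynomial is bounded on $\mathbf{H}_\infty^2$), so $\Phi$ is upper semicontinuous and $\mathcal{C}=\bigcap_{m\geq1}\{F:\Phi(F)<1/m\}$ is a $G_\delta$.

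For part (1), suppose $F_k\to F$ in $\mathbf{H}_\infty^2$ with each $F_k\in\mathcal{F}$ and $F\not\equiv0$; I must rule out a zero of $F$ in $\mathbb{D}_2^\infty$. Since $\|F_k\|\to\|F\|>0$, all but finitely many $F_k$ are nonvanishing. If $F(\mu)=0$ for some $\mu\in\mathbb{D}_2^\infty$, I would pick $\nu\in\mathbb{D}_2^\infty$ with $F(\nu)\neq0$ and restrict to the affine complex line $z\mapsto\mu+z(\nu-\mu)$. Because $\mathbb{D}_2^\infty$ is convex and open, this line lies in $\mathbb{D}_2^\infty$ for $z$ in a connected neighborhood $U$ of $[0,1]$, giving one-variable holomorphic functions $g_k(z)=F_k(\mu+z(\nu-\mu))$ and $g(z)=F(\mu+z(\nu-\mu))$ on $U$. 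The analytic input is that evaluation is uniformly bounded on compact subsets of $\mathbb{D}_2^\infty$: from (\ref{|F|p}) one has $|H(\zeta)|\le\|H\|_{\mathbf{H}_\infty^2}\big(\prod_n(1-|\zeta_n|^2)^{-1}\big)^{1/2}$, and on a compact $K\subset\mathbb{D}_2^\infty$ the product is bounded, using the uniform smallness of tails of compact subsets of $l^2$ together with $\sup_{\zeta\in K}\|\zeta\|_\infty<1$. Hence $g_k\to g$ uniformly on compact subsets of $U$; as each $g_k$ is zero-free and $g\not\equiv0$ (since $g(1)=F(\nu)\neq0$), Hurwitz's theorem forces $g$ to be zero-free, contradicting $g(0)=F(\mu)=0$. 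Thus $F$ is nonvanishing and $\mathcal{F}$ is closed.

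The main work, and the expected obstacle, is the density of $\mathcal{C}$ in $\mathcal{F}$. Given a nonvanishing $F$, I would use the prime-weighted dilations $F_{\mathrm r}(\zeta)=F(r\zeta_1,r^2\zeta_2,\cdots)$ already appearing in the proof of Proposition \ref{definition of Hp}. The point is to sharpen the membership $F_{\mathrm r}\in\mathbf{A}_\infty$ recorded there to $F_{\mathrm r}\in\mathbf{A}_{R,\infty}$ for some $R>1$, which is exactly what Theorem \ref{t5} requires (and which, by Remark \ref{r1}, genuinely fails at $R=1$). For $1<R<1/r$ the image of $R\,\overline{\mathbb{D}}^\infty$ under $\zeta\mapsto(r\zeta_1,r^2\zeta_2,\cdots)$ is a compact subset of $\mathbb{D}_2^\infty$, and a Cauchy--Schwarz estimate shows that the power series of $F_{\mathrm r}$ converges absolutely and uniformly on $R\,\overline{\mathbb{D}}^\infty$, since $\sum_\alpha r^{2\sum_n n\alpha_n}R^{2|\alpha|}=\prod_n(1-r^{2n}R^2)^{-1}<\infty$; hence $F_{\mathrm r}\in\mathbf{A}_{R,\infty}$ with $R>1$. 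As $F$ has no zero on $\mathbb{D}_2^\infty$ and $(r\zeta_1,r^2\zeta_2,\cdots)\in\mathbb{D}_2^\infty$ for $\zeta\in\mathbb{D}_2^\infty$, the function $F_{\mathrm r}$ has no zero in $\mathbb{D}_2^\infty$, so Theorem \ref{t5} makes $F_{\mathrm r}$ cyclic. Finally $F_{\mathrm r}\to F$ in $\mathbf{H}_\infty^2$ as $r\to1^-$, which is precisely the case $p=2$ of the convergence $\mathrm{d}_p(F_{\mathrm r},F)\to0$ established in the proof of Proposition \ref{definition of Hp}. Since $0\in\mathcal{F}$ is approximated by nonzero constants, which are cyclic, $\mathcal{C}$ is dense in $\mathcal{F}$.

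Assembling these, $\mathcal{C}$ is a dense $G_\delta$ in the Baire space $\mathcal{F}$, which yields (2) and (3). The one genuinely delicate point is the density step, specifically arranging the dilations to land in $\mathbf{A}_{R,\infty}$ for some $R>1$ so that Theorem \ref{t5} applies; the upper semicontinuity of $\Phi$, the Baire-category conclusion, and the Hurwitz argument for closedness (once the uniform kernel bound above is in hand) are then routine.
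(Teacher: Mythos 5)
Your proposal is correct and follows essentially the same route as the paper: Hurwitz's theorem applied along a complex line for the closedness of $\mathcal{F}$, the weighted dilation $F_{\mathbf{r}}(\zeta)=F(r\zeta_1,r^2\zeta_2,\cdots)$ landing in $\mathbf{A}_{R,\infty}$ (so that Theorem \ref{t5} gives cyclicity) for the density of $\mathcal{C}$, and the upper semicontinuity of $\delta(F)=\inf_p\|1-pF\|$ combined with the Baire category theorem for parts (2) and (3). The only deviations are cosmetic: you verify $F_{\mathbf{r}}\in\mathbf{A}_{R,\infty}$ by a Weierstrass $M$-test on the power series rather than by uniform convergence of the sections $F_{(n)}$ on a compact subset of $\mathbb{D}_2^\infty$, and you package the category argument as ``dense $G_\delta$'' rather than writing $\mathcal{F}\setminus\mathcal{C}$ as a countable union of closed nowhere dense sets.
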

\begin{proof}
We first note the fact that if $\{F_n\}$ converges to $F$ in the norm of  $\mathbf{H}_\infty^2$, then $\{F_n\}$ converges uniformly to $F$ on any compact subset $K$ of $\mathbb{D}_2^\infty$. In fact, since $\zeta\mapsto\|\mathbf{K}_\zeta\|$ is continuous on $\mathbb{D}_2^\infty$,
$\|\mathbf{K}_\zeta\|$ is uniformly bounded on $K$. Then,  as $n\rightarrow\infty$, we have
$$\sup_{\zeta\in K}|F(\zeta)-F_n(\zeta)|=\sup_{\zeta\in K}|\langle F-F_n,\mathbf{K}_\zeta\rangle|\leq\|F-F_n\|\cdot\sup_{\zeta\in K}\|\mathbf{K}_\zeta\|\rightarrow 0.$$

In what follows that  we will  prove that $\mathcal{F}$ is closed in $\mathbf{H}_\infty^2$. For this, suppose that $\{F_n\}$ is a sequence in $\mathcal{F}\setminus\{0\}$ converging to  some function $F\in\mathbf{H}_\infty^2$. We  will show $F\in\mathcal{F}$.
For a given $\zeta=(\zeta_1,\zeta_2,\cdots)\in\mathbb{D}_2^\infty\setminus\{0\}$, put $r=\max_{n\in\mathbb{N}}\{|\zeta_n|\}$ and define functions on $\mathbb{D}$ by $f(z)=F(\frac{z}{r}\zeta_1,\frac{z}{r}\zeta_2,\cdots)$ and
$f_n(z)=F_n(\frac{z}{r}\zeta_1,\frac{z}{r}\zeta_2,\cdots)\ (n\in\mathbb{N})$. Then $f$ and all $f_n$  are  holomorphic on $\mathbb{D}$. By the above fact,  $f_n$ converges uniformly to $f$ on each compact subset of $\mathbb{D}$. Since each $f_n$ has no zeros in $\mathbb{D}$, it follows from Hurwitz's theorem that $F(0)=f(0)$ and $F(\zeta)=f(r)$ equal to $0$ or not simultaneously. This means that if $F(0)=0$, then $F$ is the zero function; if $F(0)\neq0$, then $F$ has no zeros in $\mathbb{D}_2^\infty$. This shows $F\in\mathcal{F}$.

Let us continue to  prove that $\mathcal{C}$ is dense in $\mathcal{F}$. Obviously, the zero function belongs to the closure of $\mathcal{C}$. Now fix a function $F=\sum_{\alpha\in\mathbb{Z}_+^{(\infty)}} c_\alpha\zeta^{\alpha}\in\mathbf{H}_\infty^2$ without zeros in $\mathbb{D}_2^\infty$, and
set $F_{\mathbf{r}}(\zeta)=F(r\zeta_1,r^2\zeta_2,\cdots)$ for $0<r<1$. It is easy to see that $F_{\mathbf{r}}$ converges to $F$ as $r$ approaches $1$ from below. So it suffices to prove that for any given $0<r<1$, $F_{\mathbf{r}}$ is cyclic. Take $R>1$ satisfying  $rR<1$.
Since $K=rR\,\overline{\mathbb{D}}\times r^2R\,\overline{\mathbb{D}}\times\cdots$
 is compact in $\mathbb{D}_2^\infty$, $\{F_{(n)}\}$ converges uniformly to $F$ on $K$. This means  that $\{\big(F_{\mathbf{r}}\big)_{(n)}\}$ converges uniformly to $F_{\mathbf{r}}$ on $R\,\overline{\mathbb{D}}^\infty$, and hence  $F_{\mathbf{r}}\in\mathbf{A}_{R,\infty}$. Then by Theorem \ref{t5}, $F_{\mathbf{r}}$ is cyclic, completing the proof of the denseness of    $\mathcal{C}$ in $\mathcal{F}$.

Finally,  we put
$$\delta(F)=\inf\{\|1-pF\|:p\ \mathrm{ run\   through \ all\ polynomials}\},\quad F\in\mathbf{H}_\infty^2.$$ It is clear that a function $F\in\mathbf{H}_\infty^2$ is cyclic exactly when $\delta(F)=0$.
We also see that $\delta$ is upper semicontinuous on $\mathbf{H}_\infty^2$. In fact,
if $\{F_n\}$ converges to $F$, then for each polynomial $p$,
$$\delta(F_n)\leq\|1-pF_n\|\rightarrow\|1-pF\|,\quad n\rightarrow\infty,$$
forcing $\|1-pF\|\geq\overline{\lim}_{n\rightarrow\infty}\delta(F_n)$. Therefore, one has $\delta(F)\geq\overline{\lim}_{n\rightarrow\infty}\delta(F_n)$. The upper semicontinuity of $\delta$ together with
the denseness of $\mathcal{C}$  in $\mathcal{F}$
implies that
$\{F\in\mathcal{F}:\delta(F)\geq\frac{1}{n}\}$ is closed and nowhere dense in $\mathcal{F}$, and then $$\mathcal{F}\setminus\mathcal{C}
=\bigcup_{n=1}^\infty\{F\in\mathcal{F}:\delta(F)\geq\frac{1}{n}\}$$ is of the first category in $\mathcal{F}$. Also, by the Baire category theorem, $\mathcal{C}$ is of the second category in $\mathcal{F}$.
\end{proof}

\section{The cyclicity of  product functions}

In this section, we give a complete characterization for the cyclicity of infinite product of functions with mutually independent variables. Furthermore,  these  results will be used to study  the  cyclicity  of functions with  some sort of  multiplicative coefficients.

\subsection{The cyclicity of  product functions}

Following \cite{Ni1}, for a nonempty subset $S\subseteq\mathbb{N}$, let $H^2(\mathbb{D}^S)$ denote
the closed linear span of the set $$\{\zeta^\alpha:\alpha=(\alpha_1,\alpha_2,\cdots)\in\mathbb{Z}_+^{(\infty)},\  \alpha_n=0\ \mathrm{for}\ \mathrm{any}\ n\neq S\}$$ of monomials.
That is to say, functions in $H^2(\mathbb{D}^S)$
 depend only on the variables $\{\zeta_n\}_{n\in S}$. By a sequence of functions $\{F_n\}_{n\in\mathbb{N}}$ in $\mathbf{H}_\infty^2$ with mutually independent variables we mean that  there is a sequence of nonempty subsets $\{S_n\}_{n\in\mathbb{N}}$  of $\mathbb{N}$ which are pairwise disjoint, and $\bigcup_nS_n=\mathbb{N}$ such that each $F_n\in H^2(\mathbb{D}^{S_n})$ for $n\in\mathbb{N}$.

We establish the following convergence criterion for infinite product of functions with mutually independent variables, which will be proved later.

\begin{prop} \label{prod lemma}
 Let $\{F_n\}_{n\in\mathbb{N}}$ be a sequence of functions in $\mathbf{H}_\infty^2$ with mutually independent variables. Then the following are equivalent:
\noindent\begin{itemize}
\item[(1)]  the infinite product $\prod_{n=1}^{\infty}F_n$ converges  to
 a nonzero function $F$  in the norm of $\mathbf{H}_\infty^2$;
 \item[(2)]   the infinite product $\prod_{n=1}^{\infty}F_n$ converges pointwise to
 a nonzero function $F\in \mathbf{H}_\infty^2$;
\item[(3)]  $\prod_{n=1}^{\infty}\|F_n\|$ converges and $\lim_{n\rightarrow\infty}\prod_{m=n+1}^{\infty}F_m(0)=1$.
\end{itemize}
\end{prop}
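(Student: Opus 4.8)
The plan is to build everything on the single structural consequence of mutual independence of variables: if $G$ and $H$ depend on disjoint sets of coordinates, then the products $\zeta^\alpha\zeta^\beta$ arising from the monomials of $G$ and of $H$ are pairwise distinct, so $\langle GH,1\rangle=G(0)H(0)$ and $\|GH\|=\|G\|\,\|H\|$. Writing $P_N=\prod_{n=1}^N F_n$, this yields at once $\|P_N\|=\prod_{n=1}^N\|F_n\|$ together with the identity
$$\Big\|\prod_{n=N+1}^M F_n-1\Big\|^2=\prod_{n=N+1}^M\|F_n\|^2-2\,\mathrm{Re}\prod_{n=N+1}^M F_n(0)+1,$$
and, since $P_M-P_N=P_N\big(\prod_{n=N+1}^M F_n-1\big)$ involves disjoint variables, $\|P_M-P_N\|^2=\|P_N\|^2\,\big\|\prod_{n=N+1}^M F_n-1\big\|^2$. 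This identity is the engine of the whole argument.

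Assuming (3), the norms $\|P_N\|=\prod_{n\le N}\|F_n\|$ are bounded, and by the Cauchy criterion for a convergent nonzero infinite product both $\prod_{n=N+1}^M\|F_n\|^2\to1$ and $\prod_{n=N+1}^M F_n(0)\to1$ uniformly in $M$ as $N\to\infty$; hence the bracketed quantity above tends to $1-2+1=0$, so $\{P_N\}$ is Cauchy and converges in norm to some $F$ with $\|F\|=\prod\|F_n\|\ne0$, giving (1). That (1) implies (2) is immediate: norm convergence in the reproducing kernel space $\mathbf{H}_\infty^2$ forces convergence at every point of $\mathbb{D}_2^\infty$ (indeed uniformly on compacta), with the same nonzero limit $F$.

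The substance is the implication (2)$\Rightarrow$(3). Since $F\not\equiv0$, no factor $F_n$ can vanish identically, so every finite product $G_k=\prod_{n\le k}F_n$ is nonzero. Choosing a finitely supported point $\zeta_0$, say supported in $S_1\cup\cdots\cup S_{k_0}$, with $F(\zeta_0)\ne0$ (possible because finitely supported points are dense and $F$ is continuous), the factors with $n>k_0$ see only vanishing coordinates, so $P_N(\zeta_0)=G_{k_0}(\zeta_0)\prod_{k_0<n\le N}F_n(0)$. Letting $N\to\infty$ shows that $\prod_{k_0<n\le N}F_n(0)$ converges to $F(\zeta_0)/G_{k_0}(\zeta_0)\ne0$; consequently $F_n(0)\ne0$ for $n>k_0$, the tail products $\prod_{m>n}F_m(0)$ tend to $1$, which is the second half of (3), and in particular $F_n(0)\to1$.

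It then remains to prove that $\prod\|F_n\|$ converges, and this is where I expect the main obstacle. Factoring $\|F_n\|^2=|F_n(0)|^2(1+w_n)$ with $w_n=\|F_n-F_n(0)\|^2/|F_n(0)|^2\ge0$, and using that $\prod_{n>k_0}|F_n(0)|^2$ already converges to a nonzero limit, convergence of $\prod\|F_n\|^2$ reduces to $\sum_n\|F_n-F_n(0)\|^2<\infty$, since the partial products of the factors $1+w_n$ are monotone and so either converge or blow up. The real difficulty is excluding the blow-up case: I must show that $\sum_n\|F_n-F_n(0)\|^2=\infty$ is incompatible with pointwise convergence of $\prod F_n$ to a nonzero limit. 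My plan is to manufacture an explicit $\zeta\in\mathbb{D}_2^\infty$ along which the product diverges. Expanding $F_n(\zeta)/F_n(0)$ about the origin and treating first the first-order coefficients $\widehat{F_n}(e_j)$, when these carry infinite total $\ell^2$-mass an $\ell^2$-duality argument produces a square-summable $\zeta$ (hence $\zeta\in\mathbb{D}_2^\infty$) with phases chosen so that $\sum_n\big(|F_n(\zeta)/F_n(0)|-1\big)$ diverges, forcing $\prod|F_n(\zeta)|$ to tend to $+\infty$ and contradicting (2). The delicate case, and the genuine crux, is when the first-order parts are summable yet the higher-order terms account for the divergence of $\sum_n\|F_n-F_n(0)\|^2$; here one must select the bad direction from the dominant homogeneous layer of each $F_n-F_n(0)$, in the spirit of a Kakutani-type dichotomy for the independent system $\{F_n\}$ over the Haar measure on $\mathbb{T}^\infty$.
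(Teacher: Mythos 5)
Your treatment of (3)$\Rightarrow$(1) and (1)$\Rightarrow$(2) is correct and is essentially the paper's argument: the multiplicativity $\|GH\|=\|G\|\|H\|$ for variable-independent factors, the resulting identity $\|P_M-P_N\|^2=\|P_N\|^2\bigl\|\prod_{n=N+1}^MF_n-1\bigr\|^2$, and the Cauchy criterion. The first half of (2)$\Rightarrow$(3) (the tail products $\prod_{m>n}F_m(0)\to1$, obtained by evaluating at a finitely supported point where $F\neq0$) also matches the paper.

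The gap is in the remaining claim of (2)$\Rightarrow$(3), the convergence of $\prod_n\|F_n\|$, which you yourself flag as the ``genuine crux'' and only sketch. Worse, the strategy you propose for it cannot work: you aim to contradict (2) by producing a point $\zeta\in\mathbb{D}_2^\infty$ at which $\prod|F_n(\zeta)|\to\infty$, i.e.\ you plan to use only the pointwise convergence in (2) and not the hypothesis $F\in\mathbf{H}_\infty^2$. Take $F_n(\zeta)=1+\zeta_n^{\,n}$. Then $\sum_n\|F_n-F_n(0)\|^2=\infty$ and $\prod_n\|F_n\|=\prod_n\sqrt2$ diverges, yet for every $\zeta\in\mathbb{D}_2^\infty$ one has $\zeta_n\to0$, hence $\sum_n|\zeta_n|^n<\infty$ and the product converges absolutely at every point to a nonzero holomorphic function. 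So no ``bad direction'' exists; the only thing that fails in (2) for this example is membership of the limit in $\mathbf{H}_\infty^2$, and any correct proof must exploit that membership. The paper does so with a short orthogonality argument that bypasses your entire reduction: letting $P_n$ be the orthogonal projection of $\mathbf{H}_\infty^2$ onto $H^2(\mathbb{D}^{S_1\cup\cdots\cup S_n})$, one checks that $P_nF=F_1\cdots F_n\cdot\prod_{m>n}F_m(0)$, so that
$$\|F_1\|\cdots\|F_n\|\cdot\prod_{m>n}|F_m(0)|=\|P_nF\|\longrightarrow\|F\|<\infty,$$
and since the tail scalar products tend to $1$ this gives $\prod_n\|F_n\|=\|F\|$, which converges. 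I would replace your entire ``exclude blow-up'' plan (including the $\ell^2$-duality and Kakutani-dichotomy steps) with this projection computation.
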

\vskip1,5mm
Our main result in this subsection completely characterizes  the cyclicity of infinite product of functions with mutually independent variables.

\begin{thm}\label{prod theorem} Suppose that $\{F_n\}_{n\in\mathbb{N}}$ is a sequence of  functions in $\mathbf{H}_\infty^2$ with mutually independent variables and $\prod_{n=1}^{\infty}F_n$ converging pointwise to
 a nonzero function $F\in \mathbf{H}_\infty^2$. Then
  $F$ is cyclic if and only if $F_n$ is cyclic
  for each $n\in\mathbb{N}$.
\end{thm}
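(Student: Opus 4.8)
The plan is to exploit the tensor-product structure coming from the mutual independence of variables. Since the $S_n$ are pairwise disjoint and exhaust $\mathbb{N}$, the monomial basis factors, giving an isometric identification $\mathbf{H}_\infty^2\cong\bigotimes_n H^2(\mathbb{D}^{S_n})$ (Hilbert tensor product), under which $\|G\otimes H\|=\|G\|\,\|H\|$ whenever $G,H$ depend on disjoint sets of variables. The first ingredient I would isolate is a two-factor lemma: if $G\in H^2(\mathbb{D}^S)$ and $H\in H^2(\mathbb{D}^T)$ with $S\cap T=\emptyset$, then the invariant subspace generated by $GH$ equals the closed tensor product $\overline{[G]\otimes[H]}$. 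The inclusion ``$\subseteq$'' holds because $\overline{[G]\otimes[H]}$ is closed, contains $GH$, and is invariant under every $M_{\zeta_j}$ (which acts on one factor only); the inclusion ``$\supseteq$'' follows by writing an arbitrary polynomial in the joint variables as a finite sum $\sum_i p_iq_i$ with $p_i,q_i$ in the separate variable groups, so that $M_{p_iq_i}(GH)=(p_iG)\otimes(q_iH)$, and taking closures of such spans. A standard orthogonality argument then shows $GH$ is cyclic iff both $G$ and $H$ are cyclic: if $u\perp[G]$ and $0\neq v\in[H]$ then $u\otimes v\perp\overline{[G]\otimes[H]}$.

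Before invoking this, I would record the convergence consequences of Proposition \ref{prod lemma}. Since $\prod_n F_n$ converges pointwise to a nonzero $F$, part (3) holds: $\prod_n\|F_n\|$ converges and $\lim_k\prod_{m>k}F_m(0)=1$. Two facts follow. First, deleting any single factor leaves condition (3) intact, so $G_n:=\prod_{m\neq n}F_m$ converges in norm to a nonzero function and $F=F_nG_n$ (the regrouping is legitimate because $H\mapsto F_n\otimes H$ is bounded on the complementary factor, with norm $\|F_n\|$). Second, writing $R_N:=\prod_{m>N}F_m$, condition (3) again holds for the tail, so $R_N$ converges in $\mathbf{H}_\infty^2$, and from $\|R_N-1\|^2=\|R_N\|^2-2\,\mathrm{Re}\,R_N(0)+1$ together with $\|R_N\|=\prod_{m>N}\|F_m\|\to1$ and $R_N(0)=\prod_{m>N}F_m(0)\to1$ we conclude $R_N\to1$ in norm.

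For the forward direction, suppose $F$ is cyclic. Writing $F=F_nG_n$ with $F_n,G_n$ in independent variables and $G_n\neq0$, the two-factor lemma forces $F_n$ cyclic (and $G_n$ cyclic), which is exactly what is wanted. For the converse, assume each $F_n$ is cyclic; I would show $1\in[F]$. Fix $\delta>0$ and choose $N$ with $\|R_N-1\|<\delta/2$. For $n\le N$ pick polynomials $p_n$ in the $S_n$-variables with $\|p_nF_n-1\|$ small, set $q_n=p_nF_n$ and $P=\prod_{n=1}^N p_n$; then $PF=\big(\prod_{n=1}^N q_n\big)R_N\in[F]$, since $M_P$ is bounded and $[F]$ is invariant. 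Using the telescoping identity $\prod_{n=1}^N q_n-1=\sum_{k=1}^N\big(\prod_{n<k}q_n\big)(q_k-1)$ together with norm multiplicativity for disjoint variables, $\big\|\prod_{n=1}^N q_n-1\big\|\le\sum_{k=1}^N\big(\prod_{n<k}\|q_n\|\big)\|q_k-1\|$, which for the fixed $N$ can be made $<\delta/(2\|R_N\|)$ by taking each $\|q_k-1\|$ sufficiently small. Then $\|PF-1\|\le\big\|\prod_{n=1}^N q_n-1\big\|\,\|R_N\|+\|R_N-1\|<\delta$; since $[F]$ is closed, $1\in[F]$, hence $[F]\supseteq[1]=\mathbf{H}_\infty^2$ and $F$ is cyclic.

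The main obstacle I anticipate is the converse direction: the finitely many cyclicity approximations must be assembled across infinitely many variable groups, and the role of Proposition \ref{prod lemma} is precisely to guarantee that the tail $R_N$ contributes a uniformly bounded, and eventually negligible, multiplicative error. The decisive simplification is that one only needs to approximate on the finite block $1,\dots,N$, where a crude telescoping bound suffices because $N$ is fixed once $\delta$ is chosen, while the infinite tail is disposed of once and for all by the norm convergence $R_N\to1$.
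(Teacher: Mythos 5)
Your proof is correct, and while it rests on the same two pillars as the paper's argument (norm multiplicativity for functions in disjoint variable groups, and Proposition \ref{prod lemma} to control the tails), the assembly is genuinely different in both directions. For the necessity, the paper simply approximates $\prod_{n\geq2}F_n$ by polynomials in the complementary variables to get $F\in[F_1]$ directly, whereas you factor the generated invariant subspace as $[F_nG_n]\subseteq\overline{[F_n]\otimes[G_n]}$ and conclude by an orthogonality argument with $u\otimes v$; both are fine, and only the easy inclusion of your two-factor lemma is actually needed. For the sufficiency, the paper proceeds by induction, showing each tail $G_n=\prod_{m\geq n}F_m$ lies in $[F]$, and then closes the argument with the \emph{weak} convergence $G_n\stackrel{\mathrm{w}}{\rightarrow}1$ together with the weak closedness of $[F]$; you instead produce a single polynomial $P$ with $\|PF-1\|<\delta$ via a telescoping estimate over the finite block $1,\dots,N$ and dispose of the tail by the \emph{norm} convergence $R_N\to1$, which you extract from $\|R_N-1\|^2=\|R_N\|^2-2\,\mathrm{Re}\,R_N(0)+1$ together with $\|R_N\|\to1$ and $R_N(0)\to1$. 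This last observation is a nice simplification: it shows the tails converge to $1$ in norm, not merely weakly, so it both strengthens the convergence statement the paper invokes and removes the need for the induction and the ``standard argument'' for weak convergence. The only point worth being explicit about is that when you apply Proposition \ref{prod lemma} to a subfamily (deleting a factor, or passing to a tail), the index sets no longer exhaust $\mathbb{N}$ as the paper's definition of mutual independence formally requires; this is harmless since one can absorb the discarded variable groups into one of the remaining $S_m$, but it deserves a sentence.
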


We need some preparations for the proof of Theorem \ref{prod theorem}. The next lemma comes from Lemma 3.2 in \cite{Ni1}.

\begin{lem} \label{cyclicity in H2(S)} Let $S$ be a nonempty subset of $\mathbb{N}$ and $F$ be a function in $H^2(\mathbb{D}^S)$. Then
 $F$ is cyclic in
$H^2(\mathbb{D}^S)$ for $\{M_{\zeta_n}\}_{n\in S}$
if and only if $F$ is cyclic in $\mathbf{H}_\infty^2$ .
\end{lem}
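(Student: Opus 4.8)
The plan is to exploit the factorization of $\mathbf{H}_\infty^2$ induced by the partition $\mathbb{N}=S\sqcup T$, where $T=\mathbb{N}\setminus S$. Every multi-index $\alpha\in\mathbb{Z}_+^{(\infty)}$ splits uniquely as $\alpha=\beta+\gamma$ with $\mathrm{supp}(\beta)\subseteq S$ and $\mathrm{supp}(\gamma)\subseteq T$, so $\zeta^\alpha=\zeta^\beta\zeta^\gamma$, and the assignment $\zeta^\alpha\mapsto\zeta^\beta\otimes\zeta^\gamma$ carries the orthonormal basis of $\mathbf{H}_\infty^2$ onto that of $H^2(\mathbb{D}^S)\otimes H^2(\mathbb{D}^T)$. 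This gives a unitary identification $\mathbf{H}_\infty^2\cong H^2(\mathbb{D}^S)\otimes H^2(\mathbb{D}^T)$ under which $M_{\zeta_n}=M_{\zeta_n}\otimes I$ for $n\in S$, $M_{\zeta_n}=I\otimes M_{\zeta_n}$ for $n\in T$, and $F\in H^2(\mathbb{D}^S)$ corresponds to $F\otimes 1$. The whole argument reduces to computing the invariant subspace $[F]$ of $F\otimes 1$ in this tensor picture, and I will use throughout that multiplication by a monomial $\zeta^\gamma$ is isometric on $\mathbf{H}_\infty^2$ and that the embedding $H^2(\mathbb{D}^S)\hookrightarrow\mathbf{H}_\infty^2$ is isometric.

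For the forward direction, I would assume $F$ is cyclic in $H^2(\mathbb{D}^S)$, so that for every $\beta$ supported on $S$ there are polynomials $p_k$ in $\{\zeta_n\}_{n\in S}$ with $p_kF\to\zeta^\beta$ in $H^2(\mathbb{D}^S)$. For a monomial $\zeta^\gamma$ with $\mathrm{supp}(\gamma)\subseteq T$, isometry of $M_{\zeta^\gamma}$ then gives $\zeta^\gamma p_kF\to\zeta^\gamma\zeta^\beta=\zeta^\alpha$ in $\mathbf{H}_\infty^2$. Since each $\zeta^\gamma p_kF$ lies in $[F]$, every monomial $\zeta^\alpha$ lies in $[F]$, whence $[F]=\mathbf{H}_\infty^2$ and $F$ is cyclic.

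The converse is the substantive part. I would assume $F$ is cyclic in $\mathbf{H}_\infty^2$, fix polynomials $r_k$ with $r_kF\to 1$, and apply the orthogonal projection $P$ of $\mathbf{H}_\infty^2$ onto $H^2(\mathbb{D}^S)$, which in the tensor picture is $I\otimes E_0$ with $E_0$ the evaluation-at-$0$ projection on $H^2(\mathbb{D}^T)$. Writing $r_k=\sum_\gamma\zeta^\gamma p_{k,\gamma}$ with $\zeta^\gamma$ monomials in the $T$-variables and $p_{k,\gamma}$ polynomials in the $S$-variables, each term satisfies $\zeta^\gamma p_{k,\gamma}F=(p_{k,\gamma}F)\otimes\zeta^\gamma$, so $P(r_kF)=p_{k,0}F$ because $E_0$ annihilates every term with $\gamma\neq 0$. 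Hence $p_{k,0}F\in[F]_S$, the invariant subspace of $F$ inside $H^2(\mathbb{D}^S)$, and $p_{k,0}F=P(r_kF)\to P(1)=1$ by boundedness of $P$, giving $1\in[F]_S$. Since $[F]_S$ is closed and invariant under $\{M_{\zeta_n}\}_{n\in S}$, it contains $\overline{\mathrm{span}}\{\zeta^\beta:\mathrm{supp}(\beta)\subseteq S\}=H^2(\mathbb{D}^S)$, so $F$ is cyclic in $H^2(\mathbb{D}^S)$.

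The main obstacle I anticipate is not either implication per se but the bookkeeping that makes the projection $P$ interact correctly with $[F]$: one must verify cleanly that $P$ maps the $\mathbf{H}_\infty^2$-invariant subspace of $F$ back into $[F]_S$, which is precisely the identity $P(r_kF)=p_{k,0}F$ combined with the fact that $P$ commutes with $M_{\zeta_n}$ for $n\in S$. Once this is established, both implications are immediate, and the edge case $S=\mathbb{N}$ is trivial since then $H^2(\mathbb{D}^T)=\mathbb{C}$ and the two notions of cyclicity coincide verbatim.
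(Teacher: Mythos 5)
Your proof is correct. The paper does not actually prove this lemma --- it simply quotes it from Nikolski's Lemma 3.2 in [Ni1] --- and your argument is the standard one behind that result: the unitary factorization $\mathbf{H}_\infty^2\cong H^2(\mathbb{D}^S)\otimes H^2(\mathbb{D}^{\mathbb{N}\setminus S})$, isometry of monomial multipliers for the easy direction, and the projection $I\otimes E_0$ (which sends $r_kF$ to $p_{k,0}F\in[F]_S$ and $1$ to $1$) for the substantive direction. Both implications are complete, the identity $P(r_kF)=p_{k,0}F$ is verified rather than merely asserted, and the degenerate case $S=\mathbb{N}$ is handled, so there is nothing to fix.
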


We will use  the following fact. Suppose that $F_1,F_2\in \mathbf{H}_\infty^2$ have mutually independent variables, that is, there exist   disjoint nonempty subsets $S_1,S_2\subseteq\mathbb{N}$ so that $F_1\in H^2(\mathbb{D}_2^{S_1})$ and $F_2\in H^2(\mathbb{D}_2^{S_2})$. Then $F_1F_2\in H^2(\mathbb{D}_2^{S_1\cup S_2})$ and $\|F_1F_2\|=\|F_1\|\|F_2\|$.

\vskip2mm
Now we first give the proof of Proposition \ref{prod lemma}.

\noindent\textbf{Proof of Proposition \ref{prod lemma}.} (1)$\Rightarrow$ (2) is obvious. (2)$\Rightarrow$ (3)
Assume first that $\prod_{n=1}^{\infty}F_n$ converges pointwise to
 a nonzero function $F\in \mathbf{H}_\infty^2$. Since $F_1,F_2,\cdots$ have mutually independent variables, there is a sequence $\{S_n\}_{n\in\mathbb{N}}$ of  pairwise disjoint subsets of $\mathbb{N}$, such that $F_n\in H^2(\mathbb{D}^{S_n})$ and $\bigcup_{n=1}^\infty S_n=\mathbb{N}$. By Lemma \ref{l4}, one can take $l\in\mathbb{N}$ sufficiently large so that $F_{(l)}\neq0$, and  $F(\lambda_1,\cdots,\lambda_l,0,0,\cdots)\neq0$ for some $\lambda_1,\cdots,\lambda_l\in\mathbb{D}$. Since $S_1,S_2,\cdots$ are pairwise disjoint, there exists $N\in\mathbb{N}$ such that for each $n>N$, $S_n$ does not contain any numbers in $\{1,\cdots,l\}$. Therefore,
 $$\prod_{n=N+1}^{\infty}F_n(0)=\prod_{n=N+1}^{\infty}F_n(\lambda_1,\cdots,\lambda_l,0,0,\cdots)$$
 converges, which gives $\lim_{n\rightarrow\infty}\prod_{m=n+1}^{\infty}F_m(0)=1$. For the convergence of $\prod_{n=1}^{\infty}\|F_n\|$,
 let $P_n\ (n\in\mathbb{N})$ denote the orthogonal projection from $\mathbf{H}_\infty^2$ onto $H^2(\mathbb{D}^{R_n})$, where $R_n=S_1\cup\cdots\cup S_n$. Since $\bigcup_{n=1}^\infty S_n=\mathbb{N}$, we see that $P_np\rightarrow p$ in the norm of $\mathbf{H}_\infty^2$ (as $n\rightarrow\infty$) for any polynomial $p$, and then $\{P_n\}_{n\in\mathbb{N}}$ converges to the identity operator $I$ in the strong operator topology. It is routine to check that $P_nF=F_1\cdots F_n\cdot \prod_{m=n+1}^{\infty}F_m(0)$, and hence
 $$\|F\|=\lim_{n\rightarrow\infty}\|P_nF\|=\lim_{n\rightarrow\infty}\|F_1\|\dots\|F_n\|\cdot \prod_{m=n+1}^{\infty}|F_m(0)|=\prod_{n=1}^\infty\|F_n\|.$$

(3)$\Rightarrow$ (1).
Now suppose that the infinite product
$\prod_{n=1}^{\infty}\|F_n\|$ converges  and $\lim_{n\rightarrow\infty}\prod_{m=n+1}^{\infty}F_m(0)=1$.
Without loss of generality, we may assume that $F_n(0)=1$ for each $n\in\mathbb{N}$.
Put $G_n=F_1\cdots F_n\ (n\in\mathbb{N})$. Then $\|G_n\|=\|F_1\|\cdots\|F_n\|$ is uniformly bounded for all $n\in\mathbb{N}$. We also see that if $m>n$,
then $$\langle G_m,G_n\rangle=\langle G_n\prod_{l=n+1}^{m}F_l,G_n\rangle=\|G_n\|^2,$$
and therefore $\|G_m-G_n\|^2=\|G_m\|^2-\|G_n\|^2$. This shows that $\{G_n\}_{n\in\mathbb{N}}$ is a Cauchy sequence in $\mathbf{H}_\infty^2$. This together with $\lim_{n\rightarrow\infty}\|G_n\|=\prod_{n=1}^{\infty}\|F_n\|$, implies that
$\{G_n\}_{n\in\mathbb{N}}$ converges  to a nonzero function in $\mathbf{H}_\infty^2$ in the norm of $\mathbf{H}_\infty^2$.
$\hfill \square $
\vskip2mm

We are ready to prove Theorem \ref{prod theorem}.

\noindent\textbf{Proof of Theorem \ref{prod theorem}.}
Take a sequence $\{S_n\}_{n\in\mathbb{N}}$ of  pairwise disjoint subsets of $\mathbb{N}$, such that $F_n\in H^2(\mathbb{D}^{S_n})$.

 Assume that   $F$ is cyclic. One can take a sequence $\{p_k\}_{k\in\mathbb{N}}$ of polynomials in variables $\{\zeta_n:n\notin S_1\}$ that converges to $\prod_{n=2}^{\infty}F_n$  in the norm of $\mathbf{H}_\infty^2$. Therefore, we have
 $$\|F_1p_k-F\|=\|F_1(p_k-\prod_{n=2}^{\infty}F_n)\|=\|F_1\|\|p_k-\prod_{n=2}^{\infty}F_n\|\rightarrow0,\quad k\rightarrow\infty.$$
This gives $F\in[F_1]$, forcing $F_1$ to be cyclic.
The same reasoning shows that  $F_n$ is cyclic
  for each $n\geq2$.

 For the converse, assume that $F_n$ is cyclic
  for each $n\in\mathbb{N}$. Put $G_n=\prod_{m=n}^{\infty}F_m$.
Then by Proposition \ref{prod lemma}, $\|G_n\|
=\prod_{m=n}^\infty \|F_m\|$ is uniformly bounded for all $n\in\mathbb{N}$,
and $\{G_n\}_{n\in\mathbb{N}}$ converges pointwise to the constant function $1$ on $\mathbb{D}_2^\infty\setminus Z(F)$, where
$Z(F)=\{\zeta\in\mathbb{D}_2^\infty:F(\zeta)=0\}$, the zero set of $F$. Since $F\neq0$,
$\mathbb{D}_2^\infty\setminus Z(F)$ is dense in
$\mathbb{D}_2^\infty$ (in the topology inherited
from  $l^2$).
A standard argument shows that $$G_n\stackrel{\mathrm{w}}{\rightarrow}1\quad (n\rightarrow\infty)$$ in $\mathbf{H}_\infty^2$.
To complete the proof, it suffices to prove that each $G_n$ is in
the invariant subspace $[F]$ generated by $F$.
We will show this by induction. Clearly $G_1=F$ is in $[F]$. Suppose that we have shown for some $n\in\mathbb{N}$ that $G_n\in[F]$. Since $F_n$ is cyclic, $F_n$ is cyclic in $H^2(\mathbb{D}_2^{S_n})$ by Lemma \ref{cyclicity in H2(S)}. We can pick a sequence of polynomials $\{q_k\}_{k\in\mathbb{N}}$ in variables $\{\zeta_m:m\in S_n\}$ such that $q_kF_n\to1\ (k\rightarrow\infty)$ in the norm of $\mathbf{H}_\infty^2$.
Therefore, we have
\begin{equation*}
\begin{split}
   \|q_kG_n-G_{n+1}\|
   &=\|q_kF_nG_{n+1}-G_{n+1}\|\\
     &=\|(q_kF_n-1)G_{n+1}\|\\
     &=\|q_kF_n-1\|\|G_{n+1}\|\rightarrow0
\end{split}
\end{equation*} as $k\to\infty$.
This gives that $G_{n+1}\in[G_n]\subseteq[F]$. By induction, the proof is complete.
$\hfill \square $
\vskip2mm

\subsection{ The  cyclicity  of functions with  some sort of multiplicative coefficients}

Let $\{a_n\}_{n\in\mathbb{N}}$ be a sequence of complex numbers. The sequence $\{a_n\}_{n\in\mathbb{N}}$ is said to be
\begin{itemize}
  \item [(1)] totally multiplicative if $a_{mn}=a_ma_n$ for any $m,n\in\mathbb{N}$;
  \item [(2)] multiplicative if $a_{mn}=a_ma_n$ for any pair $m,n$ of coprime positive integers.
\end{itemize}
It was shown in \cite{Har} that if the coefficients of a nonzero $\mathbf{H}_\infty^2$-function $F=\sum_{n=1}^\infty a_n\zeta^{\alpha(n)}$ are totally multiplicative, then $F$ is cyclic.
Some  different proofs also were presented in \cite{HLS}, \cite{Ni1} and \cite{Koz2}.
Below we give a simple proof by virtue of Theorem \ref{prod theorem}.
By the fact that a nonzero square-summable sequence $\{a_n\}_{n\in\mathbb{N}}$ is totally multiplicative if and only if
there exists a point $\lambda\in\mathbb{D}_2^\infty$, such that $a_n=\overline{\lambda^{\alpha(n)}}$ for each $n\in\mathbb{N}$, it follows that those nonzero functions in $\mathbf{H}_\infty^2$ with totally multiplicative coefficients are exactly reproducing kernels $\mathbf{K}_\lambda(\zeta)=\sum_{n=1}^\infty\overline{\lambda^{\alpha(n)}}\zeta^{\alpha(n)}$ of $\mathbf{H}_\infty^2$.
It is also easy to see that each $\mathbf{K}_\lambda$ is a infinite product,
$$\mathbf{K}_\lambda(\zeta)=\sum_{n=1}^\infty\overline{\lambda^{\alpha(n)}}\zeta^{\alpha(n)}
=\sum_{n=1}^\infty(\bar{\lambda}\zeta)^{\alpha(n)}
=\prod_{n=1}^\infty\left(\sum_{m=1}^\infty(\overline{\lambda_n}\zeta_n)^m\right)
=\prod_{n=1}^\infty\frac{1}{1-\overline{\lambda_n}\zeta_n}.$$
Since every component in this infinite product
is an outer function in $H^2(\mathbb{D})$, Theorem \ref{prod theorem} together with Lemma \ref{cyclicity in H2(S)} immediately gives that every reproducing kernel of $\mathbf{H}_\infty^2$ is cyclic.

Using the language of Dirichlet series,
Hartman considered the case  in which  the coefficients  are  multiplicative \cite{Har}. By the Bohr transform, his conclusion can be translated as: let $F=\sum_{n=1}^\infty a_n\zeta^{\alpha(n)}$ be a function in $\mathbf{H}_\infty^2$   with multiplicative coefficients, and put $f_p=\sum_{k=0}^{\infty}a_{p^k}z^k$ for each prime number $p$,
then $F$ is cyclic if and only if each $f_p$ is an outer function in $H^2(\mathbb{D})$.  Kozlov also independently  obtained  the same  characterization for the cyclicity of  $F$ with  multiplicative coefficients in \cite{Koz2}.

Below we  apply Theorem \ref{prod theorem} to generalize these  results to a  more general situation.
Let $\gcd(m,n)$ denote the greatest common divisor of a pair $m,n$ of positive integers, and $\mathcal{P}$ denote the set of all prime numbers. For a subset  $S$  of $\mathcal{P}$, and a positive integer $n$ if $\gcd(n,p)=1$
for each $p\in S$, then we say that $n$ is coprime to $S$. This is equivalent to $\mathbf{P}(n)\cap S=\emptyset$, where $\mathbf{P}(n)$ denotes the set of all prime factors of $n$.
Now let $\mathfrak{\Delta}=\{\mathcal{P}_1,\mathcal{P}_2,\cdots\}$ be a partition of $\mathcal{P}$, that is, $\mathcal{P}_i\cap\mathcal{P}_j=\emptyset$ for $i\not=j$ and $\bigcup_k\mathcal{P}_k=\mathcal{P}.$
Say that two positive integers $m,n$ are \textit{$\mathfrak{\Delta}$-coprime} if for each $k$, either $m$ or $n$ is coprime to  $\mathcal{P}_k$. A sequence $\{a_n\}_{n\in\mathbb{N}}$ of complex numbers is said to be \textit{$\mathfrak{\Delta}$-multiplicative}  if $a_{mn}=a_ma_n$ for any pair $m,n$ of $\mathfrak{\Delta}$-coprime  positive integers.

 It is clear that any positive integer $n$ has a unique ``prime factorization" with respect to some partition $\mathfrak{\Delta}=\{\mathcal{P}_1,\mathcal{P}_2,\cdots\}$ of $\mathcal{P}$. To be more precise,
 let $\mathcal{N}_k\ (k\in\mathbb{N})$ denote the subsemigroup of $\mathbb{N}$ generated by $\mathcal{P}_k\cup\{1\}$ under multiplication. Every  positive integer $n$ can be decomposed uniquely as a product $n=\prod_{k=1}^{\infty}n_k$, where $n_k\in\mathcal{N}_k$ and there exists $N\in\mathbb{N}$ such that $n_k=1$ for any $k>N$. Furthermore, we see that $n_1,\cdots,n_N$ are pairwise $\mathfrak{\Delta}$-coprime.

 A partition {$\mathfrak{\Delta'}$} is called a refinement of  {$\mathfrak{\Delta}$} if each set in {$\mathfrak{\Delta'}$} is contained in some set in  $\mathfrak{\Delta}$. It is easy to verify that if $\{a_n\}_{n\in\mathbb{N}}$ is $\mathfrak{\Delta'}$-multiplicative, then  it also  is $\mathfrak{\Delta}$-multiplicative. The  single-point  partition $\mathfrak{\partial}=\{\{p_1\},\{p_2\},\cdots\}$ of $\mathcal{P}$ is the finest  partition. Then two positive integers $m,n$ are  $\mathfrak{\partial}$-coprime exactly when they  are  coprime in the usual sense. In this case, the notion of $\mathfrak{\partial}$-multiplicative sequences coincides with that of multiplicative sequences.



For  functions with $\mathfrak{\Delta}$-multiplicative coefficients, we   have the following result.

\begin{thm} \label{cyclicity Ssim multiplicative} Let $\mathfrak{\Delta}=\{\mathcal{P}_1,\mathcal{P}_2,\cdots\}$ be a partition of $\mathcal{P}$, and
$F=\sum_{n=1}^\infty a_n\zeta^{\alpha(n)}$ be a function in $\mathbf{H}_\infty^2$   with $\mathfrak{\Delta}$-multiplicative coefficients. For each $k\in\mathbb{N}$, put $F_k=\sum_{m\in\mathcal{N}_k}a_m\zeta^{\alpha(m)}$. Then $F$ is cyclic if and only if each $F_k$ is cyclic.
\end{thm}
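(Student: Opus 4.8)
The plan is to reduce Theorem \ref{cyclicity Ssim multiplicative} to Theorem \ref{prod theorem} by exhibiting $F$ as an infinite product of the functions $F_k$ with mutually independent variables. The first step is to check that the $F_k$ genuinely have mutually independent variables in the sense required: since $\mathfrak{\Delta}=\{\mathcal{P}_1,\mathcal{P}_2,\cdots\}$ partitions $\mathcal{P}$, and each prime $p_j$ corresponds to the coordinate $\zeta_j$ under the Bohr correspondence $\alpha$, I would set $S_k=\{j\in\mathbb{N}:p_j\in\mathcal{P}_k\}$. These $S_k$ are pairwise disjoint with $\bigcup_k S_k=\mathbb{N}$, and $F_k=\sum_{m\in\mathcal{N}_k}a_m\zeta^{\alpha(m)}$ depends only on the variables $\{\zeta_j:j\in S_k\}$ because $m\in\mathcal{N}_k$ means all prime factors of $m$ lie in $\mathcal{P}_k$. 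Hence each $F_k\in H^2(\mathbb{D}^{S_k})$, and $\{F_k\}$ has mutually independent variables.

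Next I would establish the product formula $F=\prod_{k=1}^\infty F_k$. The key algebraic fact is the unique $\mathfrak{\Delta}$-factorization: every $n\in\mathbb{N}$ decomposes uniquely as $n=\prod_k n_k$ with $n_k\in\mathcal{N}_k$ and $n_k=1$ for all but finitely many $k$, and the factors $n_k$ are pairwise $\mathfrak{\Delta}$-coprime. By the $\mathfrak{\Delta}$-multiplicativity of $\{a_n\}$, iterated across these pairwise $\mathfrak{\Delta}$-coprime factors, we get $a_n=\prod_k a_{n_k}$. Since $\zeta^{\alpha(n)}=\prod_k\zeta^{\alpha(n_k)}$ (the exponents add under the semigroup isomorphism), formally multiplying out $\prod_k F_k$ and collecting the coefficient of $\zeta^{\alpha(n)}$ reproduces $a_n$ exactly. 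I would make this rigorous by noting that $F\in\mathbf{H}_\infty^2$ is given to converge pointwise-and-in-norm as an element of $\mathbf{H}_\infty^2$; the finite partial products $\prod_{k=1}^N F_k$ are precisely $\sum_{n}a_n\zeta^{\alpha(n)}$ summed over those $n$ whose prime factors all lie in $\mathcal{P}_1\cup\cdots\cup\mathcal{P}_N$, which is exactly the orthogonal projection $P_N F$ onto $H^2(\mathbb{D}^{S_1\cup\cdots\cup S_N})$ appearing in the proof of Proposition \ref{prod lemma}. Thus $\prod_{k=1}^\infty F_k$ converges pointwise to the nonzero $F\in\mathbf{H}_\infty^2$.

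With the product representation in hand, Theorem \ref{prod theorem} applies directly: $\prod_k F_k$ converges pointwise to a nonzero $F\in\mathbf{H}_\infty^2$ and the $F_k$ have mutually independent variables, so $F$ is cyclic if and only if each $F_k$ is cyclic. This is exactly the desired conclusion.

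The main obstacle I anticipate is verifying the hypothesis of Theorem \ref{prod theorem} cleanly, namely that $\prod_k F_k$ converges pointwise to $F$ and that $F$ is genuinely the intended product rather than merely formally equal coefficient-by-coefficient. The delicate point is bridging the purely combinatorial identity $a_n=\prod_k a_{n_k}$ to an honest statement about convergence in $\mathbf{H}_\infty^2$; I expect the right tool is again the strong-operator convergence $P_N\to I$ together with $P_N F=\prod_{k=1}^N F_k\cdot\prod_{m>N}F_m(0)$, exactly as in the proof of Proposition \ref{prod lemma}, where one must also track the normalizing constants $F_m(0)=a_1=1$ (since $\mathfrak{\Delta}$-multiplicativity with $a_1\neq 0$ forces $a_1=1$). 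Once these bookkeeping details are settled, the reduction is immediate and the theorem follows.
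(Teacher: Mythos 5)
Your proposal is correct and follows essentially the same route as the paper: identify $S_k=\{j:p_j\in\mathcal{P}_k\}$, show the $F_k$ have mutually independent variables, establish $F=\prod_k F_k$ from the unique $\mathfrak{\Delta}$-factorization and $a_n=\prod_k a_{n_k}$, and invoke Theorem \ref{prod theorem}. The only cosmetic difference is in justifying pointwise convergence of the product: the paper evaluates at $\zeta\in\mathbb{D}_2^\infty$ and applies its Lemma \ref{summable lem} to the absolutely summable $\mathfrak{\Delta}$-multiplicative sequence $\{a_n\zeta^{\alpha(n)}\}$, whereas you identify the partial products with the projections $P_NF$ (valid, and in fact the same device already used inside the proof of Proposition \ref{prod lemma}).
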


To prove Theorem \ref{cyclicity Ssim multiplicative}, we need the following lemma.

\begin{lem} \label{summable lem} Let $\mathfrak{\Delta}=\{\mathcal{P}_1,\mathcal{P}_2,\cdots\}$ be a partition of $\mathcal{P}$.
  If an absolute summable sequence $\{a_n\}_{n\in\mathbb{N}}$ is $\mathfrak{\Delta}$-multiplicative,
  then $$\sum_{n=1}^{\infty}a_n=\prod_{k=1}^\infty\sum_{m\in\mathcal{N}_k}a_m.$$

\end{lem}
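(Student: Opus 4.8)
The plan is to prove the identity by reducing the infinite product over the partition classes to a statement about absolutely convergent rearrangements, exploiting the unique $\mathfrak{\Delta}$-factorization $n=\prod_k n_k$ with $n_k\in\mathcal{N}_k$ together with the multiplicativity hypothesis. First I would observe that since $\{a_n\}_{n\in\mathbb{N}}$ is absolutely summable, every subseries and every rearrangement converges absolutely, so all the manipulations below are legitimate regardless of the order of summation. In particular each factor $\sum_{m\in\mathcal{N}_k}a_m$ converges absolutely, being a subseries of $\sum_n a_n$.

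The key computational step is to expand the finite partial product $\prod_{k=1}^{N}\sum_{m\in\mathcal{N}_k}a_m$ and identify it with a subsum of $\sum_n a_n$. Distributing the product, a generic term is $a_{m_1}a_{m_2}\cdots a_{m_N}$ with $m_k\in\mathcal{N}_k$. Because $m_1,\dots,m_N$ lie in distinct partition classes, they are pairwise $\mathfrak{\Delta}$-coprime, so iterating the $\mathfrak{\Delta}$-multiplicativity gives
\begin{equation*}
a_{m_1}a_{m_2}\cdots a_{m_N}=a_{m_1m_2\cdots m_N}.
\end{equation*}
Moreover the map $(m_1,\dots,m_N)\mapsto m_1\cdots m_N$ is a bijection from $\mathcal{N}_1\times\cdots\times\mathcal{N}_N$ onto the set $A_N$ of positive integers all of whose prime factors lie in $\mathcal{P}_1\cup\cdots\cup\mathcal{P}_N$; this is exactly the uniqueness of the $\mathfrak{\Delta}$-factorization already established in the excerpt. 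Hence
\begin{equation*}
\prod_{k=1}^{N}\sum_{m\in\mathcal{N}_k}a_m=\sum_{n\in A_N}a_n.
\end{equation*}

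Finally I would let $N\to\infty$. Since $\mathfrak{\Delta}$ is a partition of all of $\mathcal{P}$, every $n$ has all its finitely many prime factors inside $\mathcal{P}_1\cup\cdots\cup\mathcal{P}_N$ for $N$ large, so $A_N\uparrow\mathbb{N}$; thus $\sum_{n\in A_N}a_n\to\sum_{n=1}^{\infty}a_n$ by absolute convergence (the tail $\sum_{n\notin A_N}|a_n|$ is a tail of a convergent series and tends to $0$). On the other hand the left-hand side is the $N$-th partial product of $\prod_{k=1}^{\infty}\sum_{m\in\mathcal{N}_k}a_m$, and the convergence of this infinite product is itself guaranteed by absolute summability, so its partial products converge to the full product. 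Equating the two limits yields the claimed identity. The main obstacle to watch is bookkeeping: one must be careful that the infinite product genuinely converges (not merely its partial products being bounded) and that the bijection onto $A_N$ is stated cleanly using the already-proven uniqueness of decomposition; both are routine once the absolute convergence is invoked up front, so no delicate estimate is needed.
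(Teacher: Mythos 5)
Your proof is correct and follows essentially the same route as the paper's: expand the finite partial product, use the unique $\mathfrak{\Delta}$-factorization and $\mathfrak{\Delta}$-multiplicativity to identify it with $\sum_{n\in\mathcal{N}_1\cdots\mathcal{N}_N}a_n$, and let $N\to\infty$ using absolute summability since these sets exhaust $\mathbb{N}$. The only cosmetic difference is that the paper phrases the iteration step by noting $n_1$ is $\mathfrak{\Delta}$-coprime to the product $n_2\cdots n_N$ rather than appealing to pairwise coprimality, but the content is identical.
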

\begin{proof}
We only prove the case that the carnality of $\mathfrak{\Delta}$ is infinite. When the carnality is finite, the proof is more easier.  For $k\in\mathbb{N}$, put
$$\mathcal{N}_1\cdots\mathcal{N}_k=\{n_1\cdots n_k:n_i\in\mathcal{N}_i, i=1,\cdots,k\}.$$
 Since $\mathbb{N}=\bigcup_{k=1}^{\infty}\mathcal{N}_1\cdots\mathcal{N}_k$, we have $$\sum_{n=1}^{\infty}a_n=\lim_{k\rightarrow\infty}
 \sum_{n\in\mathcal{N}_1\cdots\mathcal{N}_k}a_n.$$ It suffices to prove that for any given
$k\in\mathbb{N}$,
\begin{equation}\label{sum=prod of sum}
 \sum_{n\in\mathcal{N}_1\cdots\mathcal{N}_k}a_n=\prod_{i=1}^k\sum_{m\in\mathcal{N}_i}a_m.
\end{equation}
If $n\in\mathcal{N}_1\cdots\mathcal{N}_k$ decomposes as
$n=n_1\cdots n_k$, where $n_i\in\mathcal{N}_k$ for $i=1,\cdots k$, then $n_1$ and $n_2\cdots n_k$ are $\mathfrak{\Delta}$-coprime. Since  $\{a_n\}_{n\in\mathbb{N}}$ is $\mathfrak{\Delta}$-multiplicative, we have $a_n=a_{n_1}a_{n_2\cdots n_k}$, and thus by induction $a_n=a_{n_1}\cdots a_{n_k}$. This immediately gives (\ref{sum=prod of sum}).
\end{proof}

\noindent\textbf{Proof of Theorem \ref{cyclicity Ssim multiplicative}.}
We assume $F\neq0$ without  loss of generality.
It follows from the Cauchy-Schwarz inequality that for
any $\zeta\in\mathbb{D}_2^\infty$,  $\{a_n\zeta^{\alpha(n)}\}_{n\in\mathbb{N}}$ is an absolute  summable $\mathfrak{\Delta}$-multiplicative sequence. Then by Lemma \ref{summable lem}, we have
$$F(\zeta)=\sum_{n=1}^{\infty}a_n\zeta^{\alpha(n)}=\prod_{k=1}^\infty\sum_{m\in\mathcal{N}_k}a_m\zeta^{\alpha(m)}=\prod_{k=1}^\infty F_k(\zeta).$$
This implies that $\prod_{k=1}^\infty F_k$ converges pointwise to $F$. Also, the functions $F_1,F_2,\cdots$ have mutually independent variables. In fact, for the partition $\mathfrak{\Delta}=\{\mathcal{P}_1,\mathcal{P}_2,\cdots\}$, put $S_k=\{n\in\mathbb{N}:p_n\in\mathcal{P}_k\}$, where $p_n$ denotes $n$th prime number. Then $S_1,S_2,\cdots$ are pairwise disjoint, and $F_k\in H^2(\mathbb{D}^{S_k})$ for each $k\in\mathbb{N}$.
Applying Theorem \ref{prod theorem} gives the desired conclusion.
$\hfill \square $
\vskip2mm

In what follows that we will turn to another generalization  of Hartman's result. Let $S$ be a subset of $\mathcal{P}$. A pair of positive integers $m,n$ is called \textit{$S$-coprime} if for each $p\in S$, either $\gcd(m,p)=1$ or $\gcd(n,p)=1$. This  is equivalent to $\mathbf{P}(n)\cap\mathbf{P}(m)\cap S=\emptyset$.  A sequence $\{a_n\}_{n\in\mathbb{N}}$ of complex numbers is said to be \textit{$S$-multiplicative}  if $a_{mn}=a_ma_n$ for any pair $m,n$ of $S$-coprime positive integers. It is easy to see that each  $S$-multiplicative sequence is necessarily  multiplicative sequence, and  $S=\emptyset$ and $S=\mathcal{P}$ correspond to   totally  multiplicative  and multiplicative, respectively.

 \begin{thm} Let $F(\zeta)=\sum_{n=1}^\infty a_n\zeta^{\alpha(n)}$ be a function in $\mathbf{H}_\infty^2$ with $S$-multiplicative coefficients, and for each $p\in\mathcal{P}$, set $F_p(\zeta)=\sum_{k=0}^\infty a_{p^k}\zeta_p^k$,  then $F$ has the form
 $$F(\zeta)=\prod_{p\in S}F_p(\zeta)\,\prod_{q\in\mathcal{P}\setminus{S}}\, \frac{1}{1-a_q\zeta_q}.$$
 In particular, $F$ is cyclic if and only if for each $p\in S$, $F_p(z)= \sum_{k=0}^\infty a_{p^k}z^k$ is outer in $H^2(\mathbb{D})$.
\end{thm}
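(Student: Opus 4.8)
The plan is to reduce to the $\mathfrak{\Delta}$-multiplicative situation already resolved in Theorem \ref{cyclicity Ssim multiplicative}, using the partition of $\mathcal{P}$ that isolates each prime of $S$ and collects the remaining primes into a single block. We may assume $F\neq 0$; then $a_1=a_1^2$ forces $a_1=1$ (the value $a_1=0$ would propagate to $a_n=a_na_1=0$ for every $n$). Put
$$\mathfrak{\Delta}_S=\{\{p\}:p\in S\}\cup\{\mathcal{P}\setminus S\},$$
deleting the last block when $S=\mathcal{P}$. The first step is the number-theoretic observation that every $\mathfrak{\Delta}_S$-coprime pair $m,n$ is $S$-coprime: the singleton blocks $\{p\}$ $(p\in S)$ already demand that no $p\in S$ divide both $m$ and $n$, which is precisely $\mathbf{P}(m)\cap\mathbf{P}(n)\cap S=\emptyset$. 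Consequently $S$-multiplicativity, being a hypothesis imposed on the larger family of $S$-coprime pairs, implies $\mathfrak{\Delta}_S$-multiplicativity.

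Applying Theorem \ref{cyclicity Ssim multiplicative} to $\mathfrak{\Delta}_S$ then does most of the work. Writing $\mathcal{N}_p=\{1,p,p^2,\cdots\}$ for $p\in S$ and $\mathcal{N}_0$ for the subsemigroup of $\mathbb{N}$ generated by $\mathcal{P}\setminus S$, that theorem (through Lemma \ref{summable lem}) yields the factorization
$$F(\zeta)=\Big(\prod_{p\in S}F_p(\zeta)\Big)\,G(\zeta),\qquad G(\zeta)=\sum_{m\in\mathcal{N}_0}a_m\zeta^{\alpha(m)},$$
in which the factors have mutually independent variables, and simultaneously gives that $F$ is cyclic if and only if $G$ and each $F_p$ are cyclic.

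It remains to identify $G$. The crucial point is that the restricted sequence $\{a_m\}_{m\in\mathcal{N}_0}$ is \emph{totally} multiplicative: any two elements of $\mathcal{N}_0$ have all their prime factors in $\mathcal{P}\setminus S$, so their common prime factors miss $S$ entirely, making every such pair $S$-coprime and hence $a_{mn}=a_ma_n$ for all $m,n\in\mathcal{N}_0$. In particular $a_{q^k}=a_q^k$, and since $F\in\mathbf{H}_\infty^2$ forces $\sum_k|a_q|^{2k}<\infty$ we get $|a_q|<1$ for each $q\in\mathcal{P}\setminus S$. By the identification of nonzero totally multiplicative functions with reproducing kernels recorded before Theorem \ref{cyclicity Ssim multiplicative}, $G$ collapses to
$$G(\zeta)=\prod_{q\in\mathcal{P}\setminus S}\sum_{k=0}^\infty a_q^k\zeta_q^k=\prod_{q\in\mathcal{P}\setminus S}\frac{1}{1-a_q\zeta_q},$$
which both completes the product formula for $F$ and exhibits $G$ as a reproducing kernel of $\mathbf{H}_\infty^2$, hence cyclic.

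Finally, for $p\in S$ the factor $F_p$ depends only on the single variable $\zeta_p$, so by Lemma \ref{cyclicity in H2(S)} it is cyclic in $\mathbf{H}_\infty^2$ exactly when $F_p(z)=\sum_{k=0}^\infty a_{p^k}z^k$ is cyclic in $H^2(\mathbb{D})$, i.e.\ outer, by the classical Beurling description of cyclic vectors on the disk. Combining this with the automatic cyclicity of $G$ in the product criterion leaves precisely the condition that each $F_p$ $(p\in S)$ be outer. The argument is essentially mechanical once the two combinatorial facts are in place; the main obstacle is simply choosing $\mathfrak{\Delta}_S$ correctly and verifying that $\mathfrak{\Delta}_S$-coprimeness refines $S$-coprimeness while $\mathcal{N}_0$ inherits total multiplicativity — the rest is bookkeeping atop the product machinery of Theorems \ref{prod theorem} and \ref{cyclicity Ssim multiplicative}.
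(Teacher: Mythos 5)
Your proof is correct and follows essentially the same route as the paper: reduce to Theorem \ref{cyclicity Ssim multiplicative} via a suitable partition of $\mathcal{P}$, identify each factor attached to a prime $q\notin S$ as $(1-a_q\zeta_q)^{-1}$ using $a_{q^k}=a_q^k$ together with square-summability, and finish with Lemma \ref{cyclicity in H2(S)} and Beurling's theorem for the factors $F_p$, $p\in S$. The only (cosmetic) difference is that the paper invokes Theorem \ref{cyclicity Ssim multiplicative} with the finest, single-point partition --- since $S$-multiplicative already implies multiplicative --- which yields $F=\prod_{p\in\mathcal{P}}F_p$ at once and thereby avoids your intermediate step of checking total multiplicativity on $\mathcal{N}_0$ and collapsing the block factor $G$.
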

\begin{proof}
Since each  $S$-multiplicative sequence is necessarily  multiplicative, Theorem 4.4 implies that
$$ F(\zeta)=\prod_{p\in\mathcal{P}}F_p(\zeta).$$
When $q\notin S$, $S$-multiplicativeness implies that $a_{q^k}=a_q^k$ for $k=0,1,2, \cdots.$ From $\sum_k|a_{q^k}|^2=\sum_k|a_q|^{2k}<\infty$, $|a_q|<1$ for every $q\notin S$, and hence  $$F_q(\zeta)=\sum_{k=0}^\infty a_{q^k}\zeta_q^k=\sum_{k=0}^\infty a_{q}^k\zeta_q^k=\frac{1}{1-a_q\zeta_q}.$$
We obtain the  desired conclusion.
\end{proof}
\begin{cor}
Suppose that  $2\notin S\subseteq \mathcal{P}$, and   $F(\zeta)=\sum_{n=1}^\infty a_n\zeta^{\alpha(n)}\in\mathbf{H}_\infty^2$ has $S$-multiplicative coefficients. Set $G(\zeta)=\sum_{n=1}^\infty(-1)^n a_n\zeta^{\alpha(n)}.$ Then $G(\zeta)$ is cyclic if and only if $F(\zeta)$ is cyclic and $|a_2|\leq 1/2.$  In particular, if $F(\zeta)=\sum_{n=1}^\infty a_n\zeta^{\alpha(n)}$ has  totally multiplicative coefficients, then $G(\zeta)$ is cyclic if and only if  $|a_2|\leq 1/2.$
\end{cor}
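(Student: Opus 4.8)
The plan is to isolate the prime $2$ and reduce the whole statement to the product factorization from the preceding theorem together with Theorem \ref{prod theorem}, ending with a one-variable outer-function computation.

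\smallskip

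First I would rewrite $G$ as a product. Writing each $n=2^{k}m$ with $m$ odd and $k\ge 0$, the hypothesis $2\notin S$ makes $2^{k}$ and $m$ an $S$-coprime pair, so $S$-multiplicativity gives $a_n=a_{2^{k}}a_m=a_2^{\,k}a_m$ (square-summability forcing $|a_2|<1$). The crucial observation is that $(-1)^{n}$ depends only on whether $k=0$: it equals $-1$ when $k=0$ and $+1$ when $k\ge 1$, \emph{independently of $m$}. Grouping the absolutely convergent series of $G$ accordingly I would obtain
$$G(\zeta)=\Bigl(-1+\sum_{k\ge 1}a_2^{\,k}\zeta_2^{\,k}\Bigr)\Bigl(\sum_{m\ \mathrm{odd}}a_m\zeta^{\alpha(m)}\Bigr)=\widetilde{F}_2(\zeta)\,\Phi(\zeta),$$
where $\zeta_2$ is the variable attached to the prime $2$, and $\widetilde{F}_2=F_2-2=\frac{2a_2\zeta_2-1}{1-a_2\zeta_2}$ is a function of that single variable, while $\Phi=\sum_{m\ \mathrm{odd}}a_m\zeta^{\alpha(m)}=\prod_{p\ne 2}F_p$ depends only on the remaining variables.

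\smallskip

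Next I would apply the product theorem twice. Since $\widetilde{F}_2$ and $\Phi$ have mutually independent variables and $G\in\mathbf{H}_\infty^2$ is nonzero (its coefficients differ from those of $F$ only by signs), Theorem \ref{prod theorem} shows that $G$ is cyclic if and only if both $\widetilde{F}_2$ and $\Phi$ are cyclic. On the other hand, the preceding theorem gives $F=F_2\,\Phi$ with $F_2=(1-a_2\zeta_2)^{-1}$, which is zero-free and invertible in $\mathbf{H}^\infty$, hence outer and cyclic; so a second application of Theorem \ref{prod theorem} yields that $F$ is cyclic if and only if $\Phi$ is cyclic. Combining the two, $G$ is cyclic if and only if $F$ is cyclic and $\widetilde{F}_2$ is cyclic.

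\smallskip

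It then remains to determine when $\widetilde{F}_2$ is cyclic. As a function of one variable it lies in $H^2(\mathbb{D})$, and by Lemma \ref{cyclicity in H2(S)} its cyclicity in $\mathbf{H}_\infty^2$ coincides with its cyclicity in $H^2(\mathbb{D})$, i.e.\ (by the classical Beurling criterion) with its being outer. The denominator $1-a_2\zeta_2$ is zero-free and outer, so $\widetilde{F}_2$ is outer exactly when the numerator $2a_2\zeta_2-1$ has no zero in the open disk. When $a_2\ne 0$ its only zero is at $\zeta_2=1/(2a_2)$, which lies outside or on $\mathbb{T}$ precisely when $|a_2|\le 1/2$ (the boundary case $|a_2|=1/2$ being fine, since a linear polynomial with a single zero on $\mathbb{T}$ is still outer); the case $a_2=0$ is trivially outer. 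Hence $\widetilde{F}_2$ is cyclic if and only if $|a_2|\le 1/2$, which gives the stated equivalence. For the final assertion, total multiplicativity corresponds to $S=\emptyset$, so $F=\prod_p(1-a_p\zeta_p)^{-1}$ is a reproducing kernel and is therefore always cyclic, whence the criterion collapses to $|a_2|\le 1/2$. The one genuinely delicate point is the opening step: recognizing that multiplying the coefficients by $(-1)^n$ affects only the prime-$2$ factor, replacing $F_2$ by $F_2-2$, so that the problem decouples into the elementary one-variable outer-function question about $\frac{2a_2\zeta_2-1}{1-a_2\zeta_2}$; everything afterwards is a direct invocation of the product theorem and Beurling's theorem.
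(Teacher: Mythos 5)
Your proof is correct, but it takes a noticeably heavier route than the paper's, and the comparison is instructive. The paper performs no independent-variable factorization at all: since $2\notin S$, every $n$ is $S$-coprime to $2$, so $a_{2n}=a_2a_n$ for all $n$, which yields the one-line identity $F+G=2\sum_{n}a_{2n}\zeta^{\alpha(2n)}=2a_2\zeta_1F$, i.e.\ $G=(2a_2\zeta_1-1)F$; as $2a_2\zeta_1-1$ is bounded, Theorem \ref{Hp multiply Hq}\,(2) gives at once that $G$ is cyclic iff $F$ and $2a_2\zeta_1-1$ are both cyclic, and the latter is exactly the one-variable outer computation ($|a_2|\le 1/2$) with which you finish. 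Your argument reaches the same identity in disguise, since $\widetilde F_2\,\Phi=(2a_2\zeta_1-1)F_2\,\Phi=(2a_2\zeta_1-1)F$, but it routes through the $S$-multiplicative product formula of the preceding theorem, two applications of the independent-variable product theorem (Theorem \ref{prod theorem}), and Lemma \ref{cyclicity in H2(S)}. All of your steps are justified (the regrouping is legitimate by absolute convergence at each point of $\mathbb{D}_2^\infty$, $|a_2|<1$ follows from square-summability of $\{a_{2^k}\}=\{a_2^k\}$, and if $F\neq0$ then $a_1=1$ so both of your factors have nonzero constant term, the case $F=0$ being vacuous), so the extra machinery costs nothing logically; what it buys is a structural explanation of the sign change, namely that multiplying the coefficients by $(-1)^n$ replaces the prime-$2$ factor $F_2=(1-a_2\zeta_1)^{-1}$ by $F_2-2=(2a_2\zeta_1-1)/(1-a_2\zeta_1)$ and leaves every other factor untouched, whereas the paper's identity is shorter but more ad hoc. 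One cosmetic point: you write $\zeta_2$ for the variable attached to the prime $2$, which clashes with the paper's convention ($p_1=2$, so that variable is $\zeta_1$); you define your usage explicitly, so nothing is wrong, but it is worth aligning.
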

\begin{proof}
By the assumption and  $S$-multiplicativeness, we see that $a_{2n}=a_2a_n$, $n=1,2, \cdots, $ and it follows that $F+G=2\sum_{n=1}^\infty a_{2n}\zeta^{\alpha(2n)}=2a_2\zeta_1 F.$ This gives that
$$G(\zeta)=(2a_2\zeta_1-1)F(\zeta).$$
 This equality implies the desired conclusion (also see Theorem 6.1).
 \end{proof}

  \begin{exam}
  As done in Introduction, for a given  function $\psi\in L^2(0,1)$, we extend it  to an odd $2$-periodic function on $\mathbb{R}$, still denoted by $\psi$. If $\psi(x)|_{(0,1)}=x$, then by Corollary 4.7, $-\frac{\sqrt{2}}{2}\pi \mathbf{B}U\psi=\sum_{n=1}^\infty(-1)^n\frac{1}{n}\zeta^{\alpha(n)}$ is cyclic, and therefore $\{\psi(kx)\}_{k\in\mathbb{N}}$ is complete in $L^2(0,1)$.
  If $\psi(x)=\sum_{n=1}^\infty(-1)^n\frac{1}{n^{s}}\sin n\pi x\ (\mathrm{Re}\,s>\frac{1}{2})$, then similarly, we have
  that $\{\psi(kx)\}_{k\in\mathbb{N}}$ is complete in $L^2(0,1)$ if and only if $\mathrm{Re}\, s\geq1$.
\end{exam}

\section{The cyclicity of composite functions}

Let $\psi_a\ (a\in\mathbb{D})$ denote the M\"{o}bius transformation of the unit disk $\mathbb{D}$
$$\psi_a(z)=\frac{a-z}{1-\bar{a}z},\quad z\in\mathbb{D}.$$
For $\lambda=(\lambda_1,\lambda_2,\cdots)\in\mathbb{D}^\infty$, set $\Psi_\lambda(\zeta)=(\psi_{\lambda_1}(\zeta_1),\psi_{\lambda_2}(\zeta_2),\cdots)$. It is clear that $\Psi_\lambda$ maps
$\mathbb{D}_2^\infty$ onto $\mathbb{D}_2^\infty$ if and only if $\lambda\in\mathbb{D}_2^\infty$.
An interesting observation is the following. If for some $\lambda\in\mathbb{D}_2^\infty$, the  composition  map
$$C_{\Psi_\lambda}(F)=F(\psi_{\lambda_1}(\zeta_1),\psi_{\lambda_2}(\zeta_2),\cdots),\quad F\in \mathbf{H}_\infty^2,$$
defines a bounded linear operator from $\mathbf{H}_\infty^2$ into itself, then
a function $F$ is cyclic if and only if $C_{\Psi_\lambda}(F)$ is cyclic.
In fact, since $C_{\Psi_\lambda}\circ C_{\Psi_\lambda}(F)=F$, the boundedness of $C_{\Psi_\lambda}$ naturally implies its invertibility. Therefore, there are positive constants $A$ and $B$, $A<B$ such that for any $\phi\in\mathbf{A}_\infty$, $$A\|\phi F-1\|\leq \|C_{\Psi_\lambda}(\phi)C_{\Psi_\lambda}(F)-1\|\leq B\|\phi F-1\|.$$ Since  $C_{\Psi_\lambda}$ maps $\mathbf{A}_\infty$ onto itself \cite{CG}, the above discussion shows  the consistency of the cyclicity of $F$ and $C_{\Psi_\lambda}(F)$.
%

The goal of  this section is to find appropriate conditions for a sequence $\{\phi_n:\mathbb{D}_2^\infty\rightarrow\mathbb{D}\}_{n\in\mathbb{N}}$ of holomorphic functions so that the composition operator $C_\Phi:F\mapsto F(\phi_1,\phi_2,\cdots)$
 preserves the cyclicity in $\mathbf{H}_\infty^2$ in the sense that
the sufficient and necessary condition for $C_\Phi(F)$ being  cyclic  in $\mathbf{H}_\infty^2$ is that $F$ is cyclic. Inspired by the above observation, we will consider the case in which the functions $\phi_1,\phi_2,\cdots$ have mutually independent variables.

We present the main result in this section as follows.
\begin{thm}\label{compose variable-independent inner functions} Let $F$ be a function in $\mathbf{H}_\infty^2$. If $\{\eta_n\}_{n\in\mathbb{N}}$ is a sequence of nonconstant inner functions on $\mathbb{D}_2^\infty$ with mutually independent variables and $\sum_{n=1}^{\infty}|\eta_n(0)|<\infty$,
then $F(\eta_1,\eta_2,\cdots)$ is cyclic if and only if $F$ is cyclic.
\end{thm}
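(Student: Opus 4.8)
The plan is to realize the composition map $C_\Phi:F\mapsto F(\eta_1,\eta_2,\cdots)$ as a bounded and bounded-below operator on $\mathbf{H}_\infty^2$ whose deviation from being unitary is an explicit bounded weight, and then to split the two implications. First I would check that $\Phi=(\eta_1,\eta_2,\cdots)$ sends $\mathbb{D}_2^\infty$ into itself. Writing $\eta_n\in H^2(\mathbb{D}^{S_n})$ for pairwise disjoint blocks $S_n$, the reproducing-kernel estimate gives $|\eta_n(\zeta)-\eta_n(0)|\leq\|\eta_n\|\,\|\mathbf{K}^{S_n}_\zeta-1\|$, and since $\prod_m(1-|\zeta_m|^2)^{-1}$ converges the quantities $\prod_{m\in S_n}(1-|\zeta_m|^2)^{-1}-1$ are summable in $n$; together with $\sum_n|\eta_n(0)|^2\leq\sum_n|\eta_n(0)|<\infty$ this yields $\sum_n|\eta_n(\zeta)|^2<\infty$, so $\Phi(\zeta)\in\mathbb{D}_2^\infty$ and $C_\Phi F=F\circ\Phi$ is holomorphic, lying in $\mathbf{H}_\infty^2$ by Proposition \ref{definition of Hp}. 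The decisive computation is the boundary behaviour: each $\eta_n$ is inner, so on $\mathbb{T}^\infty$ the coordinates $\eta_n$ are independent (disjoint blocks, product measure $\rho$), and the distribution of a single inner function is, from its moments $\int\eta_n^k\,d\rho=\eta_n(0)^k$, the Poisson measure with kernel $P_c(w)=\frac{1-|c|^2}{|w-c|^2}$, $c=\eta_n(0)$. Hence $\Phi_*\rho=\bigotimes_n P_{\eta_n(0)}\,dm_1$, with density $w=\prod_n P_{\eta_n(0)}(w_n)$ relative to $\rho$. Since $\|P_c\|_\infty=\frac{1+|c|}{1-|c|}$, the hypothesis $\sum|\eta_n(0)|<\infty$ is precisely what makes $M:=\prod_n\frac{1+|\eta_n(0)|}{1-|\eta_n(0)|}<\infty$, so that $M^{-1}\leq w\leq M$. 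Consequently $\|C_\Phi F\|^2=\int|F|^2\,d\Phi_*\rho=\int|F|^2 w\,d\rho$, giving $M^{-1}\|F\|^2\leq\|C_\Phi F\|^2\leq M\|F\|^2$; thus $C_\Phi$ is bounded and bounded below, intertwines $M_{\zeta_n}$ with $M_{\eta_n}$, and is an isometry from $\mathbf{H}_\infty^2$ under the weighted norm $\|\cdot\|_w$ onto its closed range $\mathcal{R}=C_\Phi(\mathbf{H}_\infty^2)$.

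For the easy implication, if $F$ is cyclic pick polynomials $p_k$ with $p_k F\to 1$; applying the continuous $C_\Phi$ gives $C_\Phi(p_k)\,C_\Phi(F)\to 1$, and as each $C_\Phi(p_k)=p_k(\eta_1,\eta_2,\cdots)\in\mathbf{H}^\infty$, Lemma \ref{polynomials w* dense in Hinfty} places $C_\Phi(p_k)C_\Phi(F)\in[C_\Phi F]$, whence $1\in[C_\Phi F]$ and $C_\Phi F$ is cyclic. For the converse I would first record the clean consequence of the weighted isometry: because $\|\cdot\|_w\sim\|\cdot\|$, a function is cyclic for $\{M_{\zeta_n}\}$ in the weighted space exactly when it is cyclic in ordinary $\mathbf{H}_\infty^2$, and transporting by the unitary $C_\Phi$ shows that $F$ is cyclic in $\mathbf{H}_\infty^2$ if and only if $G:=C_\Phi F$ is cyclic inside $\mathcal{R}$ for the compressed operators $\{M_{\eta_n}|_{\mathcal R}\}$, i.e. $[G]_{\mathcal R}:=\overline{\mathrm{span}}\{\eta^\alpha G\}=\mathcal R$. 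So the theorem reduces to the single implication: if $G\in\mathcal R$ is cyclic in the full space $\mathbf{H}_\infty^2$, then $G$ is already cyclic within $\mathcal R$.

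To attack this crux I would argue by orthogonality. Let $P_{\mathcal R}$ be the projection onto $\mathcal R$ and take $h\in\mathcal R$ with $h\perp[G]_{\mathcal R}$. For every monomial $\zeta^\beta$, since $h\in\mathcal R$ we have $\langle\zeta^\beta G,h\rangle=\langle P_{\mathcal R}(\zeta^\beta G),h\rangle$, so the whole argument pivots on the claim that $P_{\mathcal R}(\zeta^\beta G)\in[G]_{\mathcal R}$; granting it, $\langle\zeta^\beta G,h\rangle=0$ for all $\beta$, hence $h\perp[G]=\mathbf{H}_\infty^2$ and $h=0$, forcing $[G]_{\mathcal R}=\mathcal R$. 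Everything thus comes down to identifying the compression of coordinate multiplication to $\mathcal R$. By block independence the compression factorizes over the blocks touched by $\beta$, and for a single block it is governed by the conditional expectation $\mathbb{E}[\zeta_n\mid\eta_m]$ (for $n\in S_m$) against the inner function $\eta_m$; the key structural fact I would need is that this conditional expectation is a holomorphic function of $\eta_m$, an Aleksandrov--Clark-type statement visible from $\langle\zeta_n,\eta_m^k\rangle=0$ for all $k\leq 0$, so that $P_{\mathcal R}M_{\zeta_n}|_{\mathcal R}$ is multiplication by a bounded holomorphic symbol in $\eta_m$ and therefore preserves $[G]_{\mathcal R}$ (via Lemma \ref{polynomials w* dense in Hinfty} and Lemma \ref{cyclicity in H2(S)}). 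I expect this last point to be the main obstacle: in infinitely many variables there is no inner--outer factorization to lean on (the device that settles the one-variable model), so the holomorphy and multiplier boundedness of the compressed symbols, and their compatibility across the infinitely many independent blocks, must be established directly; the hypothesis $\sum|\eta_n(0)|<\infty$ re-enters precisely here, keeping the weight $w$, and hence all norm comparisons used in the block-by-block passage, uniformly bounded.
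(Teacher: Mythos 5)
Your overall architecture is sound and genuinely different from the paper's in two respects, both of which I find attractive. First, you obtain the boundedness and bounded-belowness of $C_\Phi$ by identifying the pushforward $\Phi_*\rho$ as the product of harmonic measures $\bigotimes_n P_{\eta_n(0)}\,\mathrm{d}m_1$, so that $\|C_\Phi F\|^2=\int|F|^2 w\,\mathrm{d}\rho$ with $M^{-1}\le w\le M$; the paper instead gets contractivity (for $\eta_n(0)=0$) from the isometric coextension of the doubly commuting sequence $(M_{\eta_1},M_{\eta_2},\cdots)$ and then conjugates by $C_{\Psi_\lambda}$ (Proposition \ref{compose independent bounded functions}, Lemma \ref{boundedness of CPsilambda}). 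Your route recovers the same norm $\bigl(\prod_n\tfrac{1+|\eta_n(0)|}{1-|\eta_n(0)|}\bigr)^{1/2}$ and is arguably more transparent, though you should note that the identity $\|C_\Phi F\|^2=\int|F|^2\,\mathrm{d}\Phi_*\rho$ is first checked on polynomials and then extended. Second, for the hard direction you argue by duality inside the range $\mathcal R$, reducing everything to the claim that $P_{\mathcal R}(\zeta^\beta G)\in[G]_{\mathcal R}$; the paper instead projects $F(\eta)\phi_k$ for $\phi_k$ in the span $L$ of reproducing kernels $\mathbf{K}_\xi$ with finitely supported $\xi$, using the module identity $P(F(\eta)\phi)=F(\eta)P\phi$ together with the explicit computation $P\mathbf{K}_\xi=\mathbf{K}_\mu(\eta_1,\eta_2,\cdots)$, where $\mu=(\eta_1(\xi),\eta_2(\xi),\cdots)$ is finitely supported by the mutual independence of variables, hence $\mathbf{K}_\mu$ is \emph{automatically} a bounded symbol.

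The genuine gap is exactly the point you flag: you assert, but do not prove, that $P_{\mathcal R}M_{\zeta^\beta}|_{\mathcal R}$ is multiplication by a bounded holomorphic function of the $\eta$'s, and without boundedness of the symbol you cannot pass from $P_{\mathcal R}(\zeta^\beta G)=\psi(\eta)G$ to $\psi(\eta)G\in[G]_{\mathcal R}$ (Lemma \ref{polynomials w* dense in Hinfty} needs $\psi$ bounded). The claim is true, but it is the real content of the proof and must be carried out. The clean way is to first reduce to $\eta_n(0)=0$ via $C_{\Psi_\lambda}$ as in the paper; then $\{\eta^\alpha\}_{\alpha\in\mathbb{Z}_+^{(\infty)}}$ is an orthonormal basis of $\mathcal R$, and for a nonconstant block monomial $X=\zeta^{\beta|_{S_m}}$ one has $\langle X,\eta_m^{-k}\rangle=\int X\eta_m^k\,\mathrm{d}\rho=(X\eta_m^k)(0)=0$ for $k\ge0$, so the conditional expectation $\mathbb{E}[X\mid\eta_m]=\sum_{k\ge1}c_k\eta_m^k=g(\eta_m)$ with $g\in H^2_0(\mathbb{D})$; block independence then factorizes $\mathbb{E}[\zeta^\beta\mid\sigma(\eta_1,\eta_2,\cdots)]$ into a finite product of such symbols. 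Boundedness of $g$ follows because $h\mapsto P_{\mathcal R}(Xh)$ is a contraction of $\mathcal R$ equal to $M_{g(\eta_m)}|_{\mathcal R}$, and $C_\Phi$ is now an isometry intertwining $M_{g(\eta_m)}|_{\mathcal R}$ with $M_{\tilde g}$ on $\mathbf{H}_\infty^2$, so $\|g\|_\infty=\|M_{\tilde g}\|\le1$. If you insist on working with general $\eta_n(0)$, the negative-moment computation only shows $gP_{\eta_m(0)}\in H^1_0$ on $\mathbb{T}$, from which analyticity of $g$ still follows but only after an extra factorization of $1/P_c$; I would not recommend that route. Note finally that the paper's device of testing against $\mathbf{K}_\xi$ rather than against $\zeta^\beta$ sidesteps the compressed-symbol analysis entirely, since the projected object is an explicit reproducing kernel at a finitely supported point and hence visibly in $\mathbf{H}^\infty$; this is where the hypothesis of mutually independent variables does its essential work in their argument, just as it underlies the block factorization in yours.
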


Before proving Theorem \ref{compose variable-independent inner functions}, we give some explanations to the statement of the theorem.
The following two results, which will also be proved later, are established to guarantee that the composite function $F(\eta_1,\eta_2,\cdots)$ in the theorem makes sense and belongs to the space $\mathbf{H}_\infty^2$.

\begin{lem} \label{Phi map D2infty to D2infty}Let
$\{\phi_n:\mathbb{D}_2^\infty\rightarrow\mathbb{D}\}_{n\in\mathbb{N}}$ be a sequence of    holomorphic functions with mutually independent variables.
Then  $\Phi=(\phi_1,\phi_2,\cdots)$ maps $\mathbb{D}_2^\infty$ into $\mathbb{D}_2^\infty$ if and only if $\sum_{n=1}^{\infty}|\phi_n(0)|^2<\infty$.
\end{lem}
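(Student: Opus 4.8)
The claim characterizes when a sequence of holomorphic functions $\phi_n : \mathbb{D}_2^\infty \to \mathbb{D}$ with mutually independent variables assembles into a map $\Phi = (\phi_1, \phi_2, \cdots)$ landing in $\mathbb{D}_2^\infty$. Since $\mathbb{D}_2^\infty$ consists of sequences $\zeta \in l^2$ with each $|\zeta_n| < 1$, the condition $\Phi(\zeta) \in \mathbb{D}_2^\infty$ automatically forces $|\phi_n(\zeta)| < 1$ (which is built into the hypothesis), so the real content is the $l^2$-summability: $\sum_n |\phi_n(\zeta)|^2 < \infty$ for every $\zeta \in \mathbb{D}_2^\infty$. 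The theorem reduces this uniform-over-$\mathbb{D}_2^\infty$ requirement to a single scalar condition, $\sum_n |\phi_n(0)|^2 < \infty$, at the origin.

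Let me sketch the proof plan. The forward direction ($\Phi$ maps into $\mathbb{D}_2^\infty$ implies $\sum |\phi_n(0)|^2 < \infty$) is immediate: evaluating at $\zeta = 0 \in \mathbb{D}_2^\infty$ gives $\Phi(0) = (\phi_1(0), \phi_2(0), \cdots) \in l^2$, which is exactly the stated condition. The substance is the converse. The plan is to fix an arbitrary $\zeta \in \mathbb{D}_2^\infty$ and control $\sum_n |\phi_n(\zeta)|^2$ by comparing each $|\phi_n(\zeta)|$ with $|\phi_n(0)|$. Because the $\phi_n$ have mutually independent variables, there is a partition $\{S_n\}$ of $\mathbb{N}$ with $\phi_n$ depending only on the coordinates indexed by $S_n$; thus the $\phi_n$ can be analyzed essentially independently, and the $l^2$-norm of $\zeta$ distributes across the blocks $S_n$ as $\sum_n \sum_{k \in S_n} |\zeta_k|^2 = \|\zeta\|_{l^2}^2 < \infty$.

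The key quantitative step is a Schwarz--Pick type estimate. Each $\phi_n$ maps $\mathbb{D}_2^{S_n}$ into $\mathbb{D}$, so I would exploit the hyperbolic contraction property: the pseudohyperbolic distance from $\phi_n(\zeta)$ to $\phi_n(0)$ in $\mathbb{D}$ is bounded by the corresponding distance from $\zeta|_{S_n}$ to $0$ in the ball. Concretely, I expect an estimate of the shape
\begin{equation*}
\left|\frac{\phi_n(\zeta) - \phi_n(0)}{1 - \overline{\phi_n(0)}\,\phi_n(\zeta)}\right| \leq \rho\!\left(\sum_{k \in S_n} |\zeta_k|^2\right),
\end{equation*}
where $\rho$ is a comparison function tending to $0$ with its argument. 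Since $\sum_{k \in S_n} |\zeta_k|^2 \to 0$ as $n \to \infty$ (the tail of a convergent series), this controls how far $\phi_n(\zeta)$ can drift from $\phi_n(0)$. Rearranging through the M\"obius factor and using the triangle inequality in $\mathbb{D}$, I would derive a bound of the form $|\phi_n(\zeta)| \leq |\phi_n(0)| + C_\zeta \big(\sum_{k \in S_n} |\zeta_k|^2\big)^{1/2}$ for $n$ large, whence $\sum_n |\phi_n(\zeta)|^2$ is dominated by a constant multiple of $\sum_n |\phi_n(0)|^2 + \sum_n \sum_{k \in S_n} |\zeta_k|^2 = \sum_n |\phi_n(0)|^2 + \|\zeta\|_{l^2}^2$, both finite.

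I expect the main obstacle to be making the Schwarz--Pick comparison rigorous on the infinite-dimensional ball rather than on $\mathbb{D}$: I need a clean contraction estimate relating $|\phi_n(\zeta) - \phi_n(0)|$ to the $l^2$-size of the relevant coordinate block, with constants that are uniform enough to sum. One route is to restrict $\phi_n$ to the one-dimensional slice through $0$ and $\zeta|_{S_n}$ and apply the classical Schwarz lemma on a disk of radius $\big(\sum_{k \in S_n}|\zeta_k|^2\big)^{1/2} < 1$, which reduces everything to the familiar one-variable inequality $|z - a| \leq 2|rz - a|$-type manipulations already invoked in the paper (cf.\ the estimate from \cite{NGN}). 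The care needed is only in verifying that the slices stay inside $\mathbb{D}_2^{S_n}$ and that the resulting bound is summable; once the per-block estimate is in hand, the summation is routine.
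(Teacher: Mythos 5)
Your proposal is correct and follows essentially the same route as the paper: the forward direction by evaluation at $0$, and the converse by restricting each $\phi_n$ to a one-dimensional disk slice through the block $\zeta|_{S_n}$ and invoking the Schwarz lemma. The only (immaterial) differences are that the paper first normalizes to $\phi_n(0)=0$ by composing with the M\"obius map $\Psi_\lambda$ and then uses the plain Schwarz lemma with the sup-norm $r_n=\max_{m\in S_n}|\zeta_m|$ of the block, whereas you apply Schwarz--Pick directly and normalize by the $\ell^2$-norm of the block; both yield a square-summable bound on $|\phi_n(\zeta)|$.
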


\begin{prop}\label{compose independent inner functions} Suppose that
$\{\eta_n\}_{n\in\mathbb{N}}$ is a sequence of nonconstant inner functions on $\mathbb{D}_2^\infty$ with mutually independent variables.
 Then the composition operator $C_{{\eta}}: F\mapsto F(\eta_1,\eta_2,\cdots)$ is well-defined and bounded on $\mathbf{H}_\infty^2$  if and only if $\sum_{n=1}^{\infty}|\eta_n(0)|<\infty$.
In this case, $\|C_{{\eta}}\|=\big(\prod_n\frac{1+|\eta_n(0)|}{1-|\eta_n(0)|}\big)^{1/2}$.
\end{prop}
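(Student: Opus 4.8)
The plan is to prove both directions by reducing everything to polynomials and then exploiting the independence of the variables to factorize the operator into single-variable pieces. Write $a_n=\eta_n(0)$; since each $\eta_n$ is nonconstant inner we have $|a_n|<1$. Because the $\eta_n$ have mutually independent variables, there are pairwise disjoint $S_n\subseteq\mathbb{N}$ with $\eta_n\in H^2(\mathbb{D}^{S_n})$. The crucial reduction is that on a monomial $\zeta^\alpha=\prod_j\zeta_j^{\alpha_j}$ one has $C_\eta(\zeta^\alpha)=\prod_j\eta_j^{\alpha_j}$, a product of functions with mutually independent variables; the multiplicativity of norms $\|F_1\cdots F_N\|=\|F_1\|\cdots\|F_N\|$ for functions with mutually independent variables (recorded in Subsection~4.1) then lets me treat each coordinate slot separately.

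First I would analyze the single-slot operator $\Gamma_j\colon H^2(\mathbb{D})\to H^2(\mathbb{D}^{S_j})$ defined by $z^k\mapsto\eta_j^k$. Since $\eta_j$ is inner, $M_{\eta_j}$ is an isometry on $\mathbf{H}_\infty^2$, so for $k\ge l$ we get $\langle\eta_j^k,\eta_j^l\rangle=\langle\eta_j^{k-l},1\rangle=\eta_j^{k-l}(0)=a_j^{\,k-l}$. Thus the Gram matrix of $\{\eta_j^k\}_k$ is the Toeplitz matrix whose nonnegative symbol is the Poisson kernel $P(a_j,\cdot)=\frac{1-|a_j|^2}{|1-\overline{a_j}\,\cdot|^2}$ (equivalently, $\Gamma_j$ is unitarily equivalent to the composition operator $f\mapsto f\circ b_j$ with $b_j$ the disk automorphism satisfying $b_j(0)=a_j$). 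The norm of a positive Toeplitz operator equals the supremum of its symbol, so $\|\Gamma_j\|^2=\|P(a_j,\cdot)\|_\infty=\frac{1+|a_j|}{1-|a_j|}$, with suitable normalized reproducing kernels of $H^2(\mathbb{D})$ serving as near-extremal vectors. Note $\|\Gamma_j\|\ge1$ since $\|\Gamma_j z^k\|=1$.

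Next I assemble the blocks. For a polynomial $p$ in $\zeta_1,\dots,\zeta_N$, the element $C_\eta p=p(\eta_1,\dots,\eta_N)$ is the image of $p$ under the finite tensor product $\bigotimes_{j=1}^N\Gamma_j$, so $\|C_\eta p\|\le\prod_{j=1}^N\|\Gamma_j\|\,\|p\|\le\prod_{j=1}^\infty\|\Gamma_j\|\,\|p\|$, the last step using $\|\Gamma_j\|\ge1$. When $\sum_n|a_n|<\infty$ the product $\prod_j\frac{1+|a_j|}{1-|a_j|}$ converges (as $\frac{1+t}{1-t}-1\sim 2t$ near $0$), so $C_\eta$ extends from the dense polynomials to a bounded operator of norm at most $\big(\prod_n\frac{1+|a_n|}{1-|a_n|}\big)^{1/2}$; a short argument using continuity of point evaluations on $\mathbf{H}_\infty^2$ together with Lemma~\ref{Phi map D2infty to D2infty} identifies this extension with $F\mapsto F(\eta_1,\eta_2,\dots)$. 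For the reverse inequality I would, for each $N$, choose unit polynomials $f_j=f_j(\zeta_j)$ that are near-extremal for $\Gamma_j$ and set $p=\prod_{j=1}^N f_j$; independence gives $\|C_\eta p\|=\prod_{j=1}^N\|\Gamma_j f_j\|$, which can be pushed arbitrarily close to $\prod_{j=1}^N\|\Gamma_j\|$, whence $\|C_\eta\|\ge\prod_{j=1}^\infty\|\Gamma_j\|$ and the stated norm follows. The same family proves the ``only if'' part: if $\sum_n|a_n|=\infty$ then $\prod_{j=1}^N\|\Gamma_j\|\to\infty$, so $C_\eta$ is already unbounded on polynomials and admits no bounded extension.

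The main obstacle is pinning down the exact single-slot norm $\|\Gamma_j\|^2=\frac{1+|a_j|}{1-|a_j|}$ and, relatedly, controlling the passage to the infinite product. The upper bound is routine once the Gram matrix is recognized, but the exact constant requires either the positive-Toeplitz norm identity or Nordgren's theorem for the automorphism model, together with genuinely extremal test vectors; transporting these block extremizers into a single global test polynomial, and keeping the partial products monotone so that they converge to the claimed infinite product precisely when $\sum_n|a_n|<\infty$, is where the care is needed.
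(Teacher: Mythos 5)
Your argument is correct, but it takes a genuinely different route from the paper's. The paper first reduces to the centered case by composing with the M\"obius map $\Psi_\lambda$, $\lambda=(\eta_1(0),\eta_2(0),\cdots)$: writing $p(\eta_1,\eta_2,\cdots)=(p\circ\Psi_\lambda)(\Psi_\lambda(\eta_1,\eta_2,\cdots))$ and using that $C_\vartheta$ is an isometry when the inner functions $\vartheta_n=\psi_{\lambda_n}\circ\eta_n$ vanish at $0$ (Corollary \ref{cor of Propistion composite}(2), which rests on the dilation-theoretic fact that $\{\mathbf{V}^\alpha 1\}$ is orthonormal for a doubly commuting sequence of pure isometries), it obtains $\|p(\eta_1,\eta_2,\cdots)\|=\|C_{\Psi_\lambda}(p)\|$ and then invokes Lemma \ref{boundedness of CPsilambda}, whose proof identifies $\|C_{\Psi_\lambda}p\|$ with $\|p\mathbf{K}_\lambda\|/\|\mathbf{K}_\lambda\|$ and reduces everything to $\mathbf{K}_\lambda$ being a bounded multiplier (Lemma \ref{Klambda is bounded}). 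You skip the M\"obius reduction and compute the Gram matrix of $\{\eta_j^k\}_k$ directly: it is Toeplitz with symbol the Poisson kernel at $\eta_j(0)$, so the one-slot norm is $\bigl(\frac{1+|\eta_j(0)|}{1-|\eta_j(0)|}\bigr)^{1/2}$ by the Brown--Halmos/Nordgren identity, and independence of variables lets you tensorize the blocks; the centered case of your Gram computation is exactly the orthonormality statement of Corollary \ref{cor of Propistion composite}(1). The two routes compute the same quantity in disguise, since $T_{P(a,\cdot)}=(1-|a|^2)M_{\mathbf{K}_a}^*M_{\mathbf{K}_a}$ makes the Toeplitz norm identity the one-variable multiplier-norm identity for $\mathbf{K}_a$; yours is more self-contained (no dilation theory, no $\Psi_\lambda$) at the price of importing a classical $H^2(\mathbb{D})$ fact, while the paper's is shorter because it recycles Lemma \ref{boundedness of CPsilambda} and Corollary \ref{cor of Propistion composite}, which it needs elsewhere anyway. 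Your remaining details --- convergence of $\prod_j\frac{1+|a_j|}{1-|a_j|}$ precisely when $\sum_j|a_j|<\infty$, extremal test polynomials $\prod_jf_j(\zeta_j)$ whose images factor by independence, identification of the bounded extension with pointwise composition via Lemma \ref{Phi map D2infty to D2infty} and continuity of evaluations, and divergence of the partial products when $\sum_j|a_j|=\infty$ --- are all sound.
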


Under the assumptions of  Theorem \ref{compose variable-independent inner functions}, Lemma \ref{Phi map D2infty to D2infty} guarantees that for $F\in \mathbf{H}_\infty^2$, the function
$F(\eta_1,\eta_2,\cdots)$ is  defined pointwise on $\mathbb{D}_2^\infty$, and Proposition \ref{compose independent inner functions} further guarantees that $F(\eta_1,\eta_2,\cdots)$ is a function in $\mathbf{H}_\infty^2$.
Also, we see that the assumption $\sum_{n=1}^{\infty}|\eta_n(0)|<\infty$ in Theorem \ref{compose variable-independent inner functions}   is natural and necessary.

This section is mainly dedicated to the proof of Theorem \ref{compose variable-independent inner functions}. We first begin with the proof of Lemma \ref{Phi map D2infty to D2infty}.

\vskip2mm

\noindent\textbf{Proof of Lemma \ref{Phi map D2infty to D2infty}. }
Assume that $\lambda=(\phi_1(0),\phi_2(0),\cdots)\in\mathbb{D}_2^\infty$.
Since $\Psi_\lambda$ maps
$\mathbb{D}_2^\infty$ onto $\mathbb{D}_2^\infty$, we only need to consider the case $\lambda=0$. Now fix some $\zeta=(\zeta_1,\zeta_2,\cdots)\in\mathbb{D}_2^\infty$ and take a sequence $\{S_n\}_{n\in\mathbb{N}}$ of  pairwise disjoint subsets of $\mathbb{N}$, such that $\phi_n\in H^2(\mathbb{D}^{S_n})$.
We also set $$r_n=\max\{|\zeta_m|:m\in S_n\},\quad n\in\mathbb{N}.$$
It is clear that each $r_n<1$, and
$$\sum_{n=1}^{\infty}r_n^2\leq\sum_{m=1}^{\infty}
|\zeta_m|^2<\infty.$$
So it suffices to prove that for any given $n\in\mathbb{N}$, one has $|\phi_n(\zeta)|\leq r_n$.
Define a function on $\mathbb{D}$ as follows, $$f(z)=\phi_n(a_1(z)\zeta_1, a_2(z)\zeta_2,\cdots),$$
where $a_m(z)=\frac{z}{r_n}$ for $m\in S_n$; $a_m(z)\equiv0$ for $m\notin S_n$.
Then $f$ is a holomorphic function on $\mathbb{D}$,
and $f(\mathbb{D})\subseteq\phi_n(\mathbb{D}_2^\infty)\subseteq\mathbb{D}$.
Since $f(0)=\phi_n(0)=0$, it follows from Schwarz's lemma that
$|\phi_n(\zeta)|=|f(r_n)|\leq r_n$.
The proof is complete.
$\hfill \square $
\vskip2mm

In what follows, we establish a key proposition, which implies the sufficiency of
Theorem \ref{compose variable-independent inner functions}.

\begin{prop}\label{compose independent bounded functions}Let $\Phi=\{\phi_n:\mathbb{D}_2^\infty\rightarrow\mathbb{D}\}_{n\in\mathbb{N}}$ be a sequence of    holomorphic functions with mutually independent variables and $\sum_{n=1}^{\infty}|\phi_n(0)|<\infty$.
 Then $$C_\Phi(F)=F(\phi_1,\phi_2,\cdots)$$ defines a bounded linear operator from $\mathbf{H}_\infty^2$ into itself with the norm at most $\big(\prod_n\frac{1+|\phi_n(0)|}{1-|\phi_n(0)|}\big)^{1/2}$.  Moreover, if $F$ is a cyclic vector in $\mathbf{H}_\infty^2$, then
 $C_\Phi(F)$ is also cyclic.
\end{prop}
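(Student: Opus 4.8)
The plan is to separate the statement into a norm estimate (which yields simultaneously that $C_\Phi$ is well defined and bounded with the asserted bound) and a formal cyclicity-preservation argument that becomes routine once boundedness is available. Write $b_n=\phi_n(0)$ and $c_n=\big(\tfrac{1+|b_n|}{1-|b_n|}\big)^{1/2}$; since $\log\tfrac{1+t}{1-t}\sim 2t$ as $t\to 0^+$, the hypothesis $\sum_n|b_n|<\infty$ is exactly what forces the infinite product $\prod_n c_n$ to converge, and moreover each $c_n\ge 1$.

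First I would establish a single-variable estimate: if $\phi:\mathbb{D}_2^\infty\to\mathbb{D}$ is holomorphic with $\phi(0)=b$, then $g\mapsto g\circ\phi$ maps $H^2(\mathbb{D})$ into $\mathbf{H}_\infty^2$ with norm at most $\big(\tfrac{1+|b|}{1-|b|}\big)^{1/2}$. By Proposition \ref{definition of Hp} it suffices to bound $\int_{\mathbb{T}^m}|g(\phi_{(m)}(s\zeta))|^2\,\mathrm{d}m_m(\zeta)$ uniformly in $m$ and $0<s<1$, where $\phi_{(m)}:\mathbb{D}^m\to\mathbb{D}$ is the restriction and $\phi_{(m)}(0)=b$. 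For fixed $\zeta\in\mathbb{T}^m$ the slice $w\mapsto\phi_{(m)}(sw\zeta)$ is a holomorphic self-map of $\mathbb{D}$ taking the value $b$ at the origin, so the classical Littlewood subordination bound for composition operators on $H^2(\mathbb{D})$ gives $\int_{\mathbb{T}}|g(\phi_{(m)}(sw\zeta))|^2\,\mathrm{d}m_1(w)\le\tfrac{1+|b|}{1-|b|}\|g\|_{H^2(\mathbb{D})}^2$. Integrating this in $\zeta$ over $\mathbb{T}^m$, then applying Fubini and the rotation invariance of Haar measure (which makes the inner integral independent of $w$), collapses the double integral to $\int_{\mathbb{T}^m}|g(\phi_{(m)}(s\zeta))|^2\,\mathrm{d}m_m(\zeta)$. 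This yields the desired uniform bound and, at the same time, the membership $g\circ\phi_{(m)}\in H^2(\mathbb{D}^m)$.

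Next I would globalize by a tensor/truncation argument. Applying the single-variable estimate to each $\phi_n$ produces bounded maps $T_n:H^2(\mathbb{D})\to H^2(\mathbb{D}^{S_n})$, $z^k\mapsto\phi_n^k$, with $\|T_n\|\le c_n$, where $\{S_n\}$ is the partition witnessing the independence of the $\phi_n$. For a polynomial $F$ in $\zeta_1,\dots,\zeta_N$ one has $C_\Phi F=F(\phi_1,\dots,\phi_N)$, and under the isometric identifications of the polynomial subspace of $\mathbf{H}_\infty^2$ with $\bigotimes_{n=1}^N H^2(\mathbb{D})$ and of its image with $\bigotimes_{n=1}^N H^2(\mathbb{D}^{S_n})\subseteq\mathbf{H}_\infty^2$ (valid because the $S_n$ are pairwise disjoint), the restriction of $C_\Phi$ is exactly $T_1\otimes\cdots\otimes T_N$. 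Hence $\|C_\Phi F\|\le\prod_{n=1}^N\|T_n\|\,\|F\|\le\big(\prod_n c_n\big)\|F\|$. Since polynomials are dense, $C_\Phi$ extends to a bounded operator with $\|C_\Phi\|\le\prod_n c_n=\big(\prod_n\tfrac{1+|b_n|}{1-|b_n|}\big)^{1/2}$; and approximating an arbitrary $F$ by polynomials $p_k$ and using that norm convergence implies pointwise convergence on $\mathbb{D}_2^\infty$ (via the reproducing kernel), together with Lemma \ref{Phi map D2infty to D2infty} (so that $\Phi(\zeta)\in\mathbb{D}_2^\infty$), identifies this extension with the pointwise composition $F\mapsto F\circ\Phi$.

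For the cyclicity assertion, suppose $F$ is cyclic and choose polynomials $p_k$ with $p_kF\to 1$ in $\mathbf{H}_\infty^2$. Boundedness of $C_\Phi$ and the pointwise identity $C_\Phi(p_kF)=(p_k\circ\Phi)\,(F\circ\Phi)=p_k(\Phi)\,C_\Phi(F)$ give $p_k(\Phi)\,C_\Phi(F)\to C_\Phi(1)=1$. Each $p_k(\Phi)$ is a bounded holomorphic function on $\mathbb{D}_2^\infty$, so Lemma \ref{polynomials w* dense in Hinfty} places $p_k(\Phi)\,C_\Phi(F)$ in $[C_\Phi(F)]$; letting $k\to\infty$ gives $1\in[C_\Phi(F)]$, whence $[C_\Phi(F)]$ contains every polynomial and so equals $\mathbf{H}_\infty^2$, i.e. $C_\Phi(F)$ is cyclic. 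The main obstacle is precisely the single-variable estimate of the second paragraph: the reduction to the one-dimensional Littlewood bound through slicing, followed by the rotation-invariance averaging, is where the genuine work lies, while the tensor multiplicativity and the cyclicity argument are then essentially formal.
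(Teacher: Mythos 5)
Your proof is correct, but it reaches the norm bound by a genuinely different route from the paper. The paper first reduces to the case $\Phi(0)=0$ by composing with the M\"obius product map $\Psi_\lambda$ (Lemma \ref{boundedness of CPsilambda}), and then invokes the dilation theory of \cite{DG}: the multiplication tuple $\mathbf{T}=(M_{\phi_1},M_{\phi_2},\cdots)$ is a doubly commuting sequence of $C_{\cdot0}$-contractions, it is coextended to a doubly commuting sequence $\mathbf{V}$ of pure isometries, and the vanishing $V_n^*1=M_{\phi_n}^*1=0$ makes $\{\mathbf{V}^\alpha 1\}$ orthonormal, whence $\|p(\mathbf{T})1\|\leq\|p(\mathbf{V})1\|_{\mathcal{K}}=\|p\|$ and $\|C_\Phi\|\leq1$ in the normalized case. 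You instead prove the one-function estimate $\|g\circ\phi\|^2\leq\frac{1+|\phi(0)|}{1-|\phi(0)|}\|g\|_{H^2(\mathbb{D})}^2$ by Littlewood subordination on diagonal slices $w\mapsto\phi_{(m)}(sw\zeta)$ followed by rotation-invariance averaging, and then assemble the full bound by tensor multiplicativity over the independent variable blocks; this is more elementary (no dilation machinery, no reduction to $\Phi(0)=0$) and delivers the product-form norm bound in one stroke, and your cyclicity argument coincides with the paper's. What the paper's dilation approach buys, and yours does not directly, is the additional structural information recorded in Corollary \ref{cor of Propistion composite} --- the orthonormality of $\{\mathbf{T}^\alpha1\}$, the isometry of $C_\Phi$ when the $\phi_n$ are inner with $\phi_n(0)=0$, and the identification of $\mathrm{Ran}\,C_\Phi$ with the cyclic subspace $E$ --- which is then essential for the necessity direction of Theorem \ref{compose variable-independent inner functions}; so as a proof of Proposition \ref{compose independent bounded functions} alone your argument is complete and arguably cleaner, but it would not replace the paper's proof wholesale in the logical architecture of Section 5.
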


To prove   Proposition \ref{compose independent bounded functions}, we need the following lemma.

\begin{lem} \label{boundedness of CPsilambda}
 $C_{\Psi_\lambda}$ defines a bounded linear operator from $\mathbf{H}_\infty^2$ into itself if and only if $\sum_{n=1}^{\infty}|\lambda_n|<\infty$.
 In this case, $\|C_{\Psi_\lambda}\|=\big(\prod_n\frac{1+|\lambda_n|}{1-|\lambda_n|}\big)^{1/2}$,  and  $C_{\Psi_\lambda}(F)$ is cyclic if and only if $F$ is cyclic.
\end{lem}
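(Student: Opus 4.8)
The plan is to exploit the product structure of $\Psi_\lambda$, which acts on each coordinate separately, and to reduce everything to the one-variable composition operators $C_{\psi_{\lambda_n}}$ on $H^2(\mathbb{D})$. I would start from the classical fact that for a disk automorphism the composition operator is bounded with $\|C_{\psi_a}\|_{H^2(\mathbb{D})}=\big(\frac{1+|a|}{1-|a|}\big)^{1/2}$, since $\psi_a(a)=0$. First I would fix $n$ and observe that on $H^2(\mathbb{D}^n)\cong H^2(\mathbb{D})^{\otimes n}$ the map $F\mapsto F(\psi_{\lambda_1}(\zeta_1),\dots,\psi_{\lambda_n}(\zeta_n))$ is exactly the tensor product $C_{\psi_{\lambda_1}}\otimes\cdots\otimes C_{\psi_{\lambda_n}}$ (both sides agree on monomials). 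Since the norm of a tensor product of Hilbert space operators is the product of the norms, this restricted operator has norm $\prod_{k=1}^n\big(\frac{1+|\lambda_k|}{1-|\lambda_k|}\big)^{1/2}$. Identifying $H^2(\mathbb{D}^n)$ with the subspace of $\mathbf{H}_\infty^2$ of functions depending only on $\zeta_1,\dots,\zeta_n$ (Lemma \ref{l4}), on which the two norms coincide, already gives the lower bound $\|C_{\Psi_\lambda}\|\geq\prod_{k=1}^n\big(\frac{1+|\lambda_k|}{1-|\lambda_k|}\big)^{1/2}$ for every $n$.

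For the upper bound and well-definedness I would take an arbitrary $F\in\mathbf{H}_\infty^2$ and work with its truncations $F_{(n)}$. Because $F_{(n)}$ depends only on $\zeta_1,\dots,\zeta_n$, its image $C_{\Psi_\lambda}(F_{(n)})$ again depends only on those variables, and by the previous paragraph $\|C_{\Psi_\lambda}(F_{(n)})\|_{\mathbf{H}_\infty^2}\leq\big(\prod_{k=1}^n\frac{1+|\lambda_k|}{1-|\lambda_k|}\big)^{1/2}\|F_{(n)}\|_{\mathbf{H}_\infty^2}\leq M\,\|F\|_{\mathbf{H}_\infty^2}$, where $M=\big(\prod_{k=1}^\infty\frac{1+|\lambda_k|}{1-|\lambda_k|}\big)^{1/2}$. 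Since $\sum_n|\lambda_n|<\infty$ makes $M$ finite (the product $\prod\frac{1+|\lambda_n|}{1-|\lambda_n|}$ converges if and only if $\sum|\lambda_n|<\infty$), the sequence $\{C_{\Psi_\lambda}(F_{(n)})\}$ is norm-bounded by $M\|F\|$. I would then check that it converges pointwise on $\mathbb{D}_2^\infty$ to $C_{\Psi_\lambda}(F)$: for fixed $\zeta$ the arguments $(\psi_{\lambda_1}(\zeta_1),\dots,\psi_{\lambda_n}(\zeta_n),0,0,\dots)$ converge in $l^2$ to $\Psi_\lambda(\zeta)\in\mathbb{D}_2^\infty$ (using $\lambda\in\mathbb{D}_2^\infty$), and $F$ is continuous there. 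A bounded sequence in the Hilbert space $\mathbf{H}_\infty^2$ has weakly convergent subsequences, and since the point evaluations $\langle\,\cdot\,,\mathbf{K}_\zeta\rangle$ are continuous, every weak subsequential limit must coincide with the pointwise limit $C_{\Psi_\lambda}(F)$. Hence $C_{\Psi_\lambda}(F)\in\mathbf{H}_\infty^2$ with $\|C_{\Psi_\lambda}(F)\|\leq M\|F\|$ by weak lower semicontinuity of the norm; together with the lower bound this yields $\|C_{\Psi_\lambda}\|=M=\big(\prod_n\frac{1+|\lambda_n|}{1-|\lambda_n|}\big)^{1/2}$.

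The necessity is then immediate: if $\sum_n|\lambda_n|=\infty$ then $M=\infty$, and the lower bound $\|C_{\Psi_\lambda}\|\geq\prod_{k=1}^n(\cdots)^{1/2}\to\infty$ shows $C_{\Psi_\lambda}$ cannot be bounded, the operator being genuinely defined on the dense finite-variable functions where this blow-up occurs. Finally, for the cyclicity equivalence I would invoke the observation recorded at the beginning of this section: since $\psi_a$ is an involution, $C_{\Psi_\lambda}\circ C_{\Psi_\lambda}=\mathrm{id}$, so once $C_{\Psi_\lambda}$ is bounded it is automatically invertible; being bounded above and below and mapping $\mathbf{A}_\infty$ onto itself (by \cite{CG}), it transforms $\inf_{\phi\in\mathbf{A}_\infty}\|\phi F-1\|$ comparably to $\inf_{\psi\in\mathbf{A}_\infty}\|\psi\,C_{\Psi_\lambda}(F)-1\|$, whence $F$ is cyclic if and only if $C_{\Psi_\lambda}(F)$ is.

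\textbf{Main obstacle.} I expect the delicate point to be the passage from finitely many to infinitely many variables, specifically identifying $C_{\Psi_\lambda}(F)$ as a genuine element of $\mathbf{H}_\infty^2$. The tensor-product norm computation is clean but only controls the truncations; the real content is that the uniform bound $M\|F\|$ together with $l^2$-continuity of $F$ pins down the weak limit as the honestly composed function $C_{\Psi_\lambda}(F)$, which is precisely what promotes a bound on truncations into boundedness of the operator itself.
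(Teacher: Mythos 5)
Your proof is correct, and it reaches the norm formula by a different route than the paper. You work coordinatewise: you identify the restriction of $C_{\Psi_\lambda}$ to $H^2(\mathbb{D}^n)$ with the tensor product $C_{\psi_{\lambda_1}}\otimes\cdots\otimes C_{\psi_{\lambda_n}}$ and import the classical one-variable norm $\|C_{\psi_a}\|_{H^2(\mathbb{D})}=\bigl(\frac{1+|a|}{1-|a|}\bigr)^{1/2}$, so that the exact norm and the divergence criterion both fall out of the multiplicativity of norms under tensor products. The paper instead proves the single identity $\|C_{\Psi_\lambda}(p)\|=\|p\,\mathbf{K}_\lambda\|/\|\mathbf{K}_\lambda\|$ for every polynomial $p$ (via the unitary weighted composition $F\mapsto C_{\Psi_{\lambda'}}(F)\mathbf{K}_{\lambda'}/\|\mathbf{K}_{\lambda'}\|$ on $H^2(\mathbb{D}^n)$ and the independence of the remaining variables), which converts boundedness of $C_{\Psi_\lambda}$ into the statement that $\mathbf{K}_\lambda$ is a multiplier, and then quotes Lemma \ref{Klambda is bounded}; the norm is read off as $\|\mathbf{K}_\lambda\|_\infty/\|\mathbf{K}_\lambda\|$. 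Your approach is more elementary in that it bypasses the reproducing-kernel/multiplier machinery, at the cost of invoking the classical composition-operator norm; the paper's identity is slightly sharper information (an exact equality for each polynomial, tied to the kernel $\mathbf{K}_\lambda$ that recurs throughout the paper). Your passage from truncations to the full operator (uniform bound plus pointwise convergence plus weak compactness and lower semicontinuity of the norm) is a sound replacement for the paper's density argument on polynomials, and your treatment of the cyclicity equivalence via the involution property and the invariance of $\mathbf{A}_\infty$ is exactly the paper's.
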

\begin{proof}
It is easy to see that  $C_{\Psi_\lambda}$ is well-defined and bounded on $\mathbf{H}_\infty^2$ if and only if there exist constants $0<c<C$ such that \begin{equation}\label{cp Cp}
                              c\|p\|\leq\|p(\psi_{\lambda_1}(\zeta_1),\psi_{\lambda_2}(\zeta_2),\cdots)\|\leq C\|p\|
                            \end{equation} for any polynomial $p$. Now we claim that
$$\|p(\psi_{\lambda_1}(\zeta_1),\psi_{\lambda_2}(\zeta_2),\cdots)\|=\frac{\|p\mathbf{K}_\lambda\|}{\|\mathbf{K}_\lambda\|}.$$
Fix a polynomial $p$ and suppose that $p$ only depends on the variables $\zeta_1,\cdots,\zeta_n$ ($n\in\mathbb{N}$).
We also put $$\lambda'=(\lambda_1,\cdots,\lambda_n,0,0,\cdots)$$ and $$\lambda''=(0,\cdots,0,\lambda_{n+1},\lambda_{n+2},\cdots).$$
Then $$\mathbf{K}_{\lambda'}(\zeta)=\frac{1}{1-\overline{\lambda_1}\zeta_1}\cdots \frac{1}{1-\overline{\lambda_n}\zeta_n},\quad \zeta=(\zeta_1,\zeta_2,\cdots)\in\mathbb{D}_2^\infty.$$
It is easy to verify that
$$F\mapsto \frac{C_{\Psi_\lambda}(F)\cdot \mathbf{K}_{\lambda'}}{\|\mathbf{K}_{\lambda'}\|}$$
defines an isometry on $H^2(\mathbb{D}^n)$.
Therefore, $$\|p(\psi_{\lambda_1}(\zeta_1),\psi_{\lambda_2}(\zeta_2),\cdots)\|=\|C_{\Psi_\lambda}(p)\|
=\frac{\|C_{\Psi_\lambda}\circ C_{\Psi_\lambda}(p)\cdot \mathbf{K}_{\lambda'}\|}{\|\mathbf{K}_{\lambda'}\|}
=\frac{\|p\mathbf{K}_{\lambda'}\|}{\|\mathbf{K}_{\lambda'}\|}.$$
On the other hand, since $p\mathbf{K}_{\lambda'}$ and $\mathbf{K}_{\lambda''}$
have mutually independent variables, we have $\|p\mathbf{K}_\lambda\|=\|p\mathbf{K}_{\lambda'}\|\|\mathbf{K}_{\lambda''}\|$, and then
$$\frac{\|p\mathbf{K}_\lambda\|}{\|\mathbf{K}_\lambda\|}=\frac{\|p\mathbf{K}_{\lambda'}\|\|\mathbf{K}_{\lambda''}\|}
{\|\mathbf{K}_{\lambda'}\|\|\mathbf{K}_{\lambda''}\|}
=\frac{\|p\mathbf{K}_{\lambda'}\|}{\|\mathbf{K}_{\lambda'}\|}.$$
This proves the claim.

By the claim and (\ref{cp Cp}), $C_{\Psi_\lambda}$ is well-defined and bounded on $\mathbf{H}_\infty^2$ if and only if
$\mathbf{K}_\lambda$ is a multiplier on $\mathbf{H}_\infty^2$.   Then the lemma immediately follows from Lemma \ref{Klambda is bounded}.
In this case we see from the claim that
$$\|C_{\Psi_\lambda}\|=\frac{\|\mathbf{K}_{\lambda}\|_\infty}{\|\mathbf{K}_{\lambda}\|}=\big(\prod_n\frac{1+|\lambda_n|}{1-|\lambda_n|}\big)^{1/2}.$$
The remaining proof is obvious.

\end{proof}

 We also need a conclusion proved in a previous paper of us \cite{DG}.
Recall that a contraction  $T$ ($\|T\|\leq1$) is said to be in the class $C_{\cdot0}$ if $T^{*k}\rightarrow0$ (as $k\rightarrow\infty$) in the strong operator topology.
If $\phi:\mathbb{D}_2^\infty\rightarrow\mathbb{D}$ is holomorphic, then the multiplication operator
$M_\phi$ is a $C_{\cdot0}$-contraction on $\mathbf{H}_\infty^2$. In fact, for any $\lambda\in\mathbb{D}_2^\infty$, one has $$
 \|M_\phi^{*k}\mathbf{K}_\lambda\|=\|\overline{\phi(\lambda)}^k\mathbf{K}_\lambda\|
=|\phi(\lambda)|^k\|\mathbf{K}_\lambda\|\rightarrow0,\quad k\rightarrow\infty.
$$
Since the linear span of reproducing kernels of $\mathbf{H}_\infty^2$ is dense, we
see that $\|M_\phi^{*k}F\|\rightarrow0\ (k\rightarrow\infty)$ also holds for any  $F\in\mathbf{H}_\infty^2$.
A pair $S, T$  of operators  is called  doubly commuting if $ST=TS$ and $ST^*=T^*S$. A sequence of operators is said to be doubly commuting if any pair of operators in it are doubly commuting. Suppose that $\mathbf{T}=(T_1,T_2,\cdots)$ is a doubly commuting sequence of $C_{\cdot0}$-contractions on some Hilbert space $\mathcal{H}$,
and set $\mathbf{T}^\alpha=T_1^{\alpha_1}\cdots T_m^{\alpha_m}$ for $\alpha=(\alpha_1,\cdots,\alpha_m,0,0,\cdots)\in\mathbb{Z}_+^{(\infty)}$ ($m\in\mathbb{N}$).
It was shown that there exists a doubly commuting sequence $\mathbf{V}=(V_1,V_2,\cdots)$ of pure isometries coextending $\mathbf{T}$, on
a larger Hilbert space $\mathcal{K}\supseteq \mathcal{H}$, that is,
$V_n^*|_{\mathcal{H}}=T_n^*$ for each $n\in\mathbb{N}$ (see \cite[Subsection 2.1]{DG}).

\vskip2mm


\noindent\textbf{Proof of Proposition \ref{compose independent bounded functions}. }
 Put $\lambda=\Phi(0)=(\phi_1(0),\phi_2(0),\cdots)$. By Lemma \ref{boundedness of CPsilambda},
$C_{\Psi_\lambda}$ is an invertible bounded linear operator on $\mathbf{H}_\infty^2$.
Then it suffices to prove the case  $\lambda=0$.
Consider the sequence $\mathbf{T}=(M_{\phi_1},M_{\phi_2},\cdots)$ of multiplication operators.
Then $\mathbf{T}$ is a doubly commuting sequence of $C_{\cdot0}$-contractions.
Now take a doubly commuting sequence $\mathbf{V}=(V_1,V_2,\cdots)$ of pure isometries  coextending $\mathbf{T}=(M_{\phi_1},M_{\phi_2},\cdots)$, on
a larger Hilbert space $\mathcal{K}\supseteq \mathbf{H}_\infty^2$. Then
$$V_n^*1=M_{\phi_n}^*1=\overline{\phi_n(0)}1=0,\quad n\in\mathbb{N}.$$
This gives that $\langle \mathbf{V}^\alpha1,\mathbf{V}^\beta1\rangle_{\mathcal{K}}=0$ for any two distinct multi-indices $\alpha,\beta\in\mathbb{Z}_+^{(\infty)}$. In fact, we may assume that $\alpha_n<\beta_n$ for some $n\in\mathbb{N}$. Then
$$V_n^{*\beta_n}\mathbf{V}^\alpha1=V_n^{*\beta_n-\alpha_n}\mathbf{V}^{\alpha-\alpha_ne_n}1
=\mathbf{V}^{\alpha-\alpha_ne_n}V_n^{*\beta_n-\alpha_n}1=0,$$
and thus $$\langle \mathbf{V}^\alpha1,\mathbf{V}^\beta1\rangle_{\mathcal{K}}=\langle V_n^{*\beta_n} \mathbf{V}^\alpha1,\mathbf{V}^{\beta-\beta_ne_n}1\rangle_{\mathcal{K}}=0,$$
where $e_n$ is the sequence which is $1$ at the  $n$-th entry and $0$ elsewhere.
Therefore, $\|p(\mathbf{V})1\|_{\mathcal{K}}=\|p\|$ for any polynomial $p$. Let $P$ denote the orthogonal projection from $\mathcal{K}$ onto $\mathbf{H}_\infty^2$. Since $\mathbf{V}$ is a coextension of $\mathbf{T}$, it is routine to check that
$p(\mathbf{T})=Pp(\mathbf{V})|_{\mathbf{H}_\infty^2}$
holds for every polynomial $p$. Then we have
$$\|p(\phi_1,\phi_2,\cdots)\|=\|p(\mathbf{T})1\|
=\|Pp(\mathbf{V})1\|\leq\|p(\mathbf{V})1\|_{\mathcal{K}}=\|p\|.$$
It follows that if $\{p_k\}_{k\in\mathbb{N}}$ is a sequence of polynomials converging to some $F\in \mathbf{H}_\infty^2$ in the norm of $\mathbf{H}_\infty^2$, then $\{p_k(\phi_1,\phi_2,\cdots)\}_{k\in\mathbb{N}}$ is a Cauchy sequence in $\mathbf{H}_\infty^2$, and hence
converges to a function $G\in \mathbf{H}_\infty^2$ with $\|G\|\leq\|F\|$.
Also, $\{p_k(\phi_1,\phi_2,\cdots)\}_{k\in\mathbb{N}}$ converges pointwise to the function $F(\phi_1,\phi_2,\cdots)$ on $\mathbb{D}_2^\infty$, forcing $G=F(\phi_1,\phi_2,\cdots)$. In conclusion,
$C_\Phi:F\mapsto F(\phi_1,\phi_2,\cdots)$ is well-defined and bounded on $\mathbf{H}_\infty^2$ with operator norm at most  $1$.
Therefore,
 for any polynomial $p$,
\begin{equation}\label{CPhi(pF-1)}
 \|C_\Phi(p)C_\Phi(F)-1\|=\|C_\Phi(p F-1)\|\leq \|p F-1\|.
\end{equation}
Note that $C_\Phi(p)$ is bounded on $\mathbb{D}_2^\infty$, and hence  the conclusion on the cyclicity follows from (\ref{CPhi(pF-1)}) and
Lemma \ref{polynomials w* dense in Hinfty}. From the above proof, we see that $\|C_{\Phi}\|\leq 1$ if $\Phi(0)=0.$ Combining this fact with  a simple  reasoning gives   the  conclusion on the norm of $C_\Phi$.
$\hfill \square $
\vskip2mm

From the proof of Proposition \ref{compose independent bounded functions}, we obtain the following corollary, which will be used in the sequel.

\begin{cor} \label{cor of Propistion composite}
 Under the hypothesis of  Proposition \ref{compose independent bounded functions}, and furthermore, we assume  that $\phi_n$ is inner and $\phi_n(0)=0$ for each $n\in\mathbb{N}$. Let $E$ denote the joint invariant subspace of $\mathbf{H}_\infty^2$
      generated by $1$ for the sequence $\mathbf{T}=(M_{\phi_1},M_{\phi_2,},\cdots)$ of multiplication operators.
Then \begin{itemize}
       \item [(1)] $\{\mathbf{T}^\alpha1:\alpha\in\mathbb{Z}_+^{(\infty)}\}$ constitutes an  orthonormal basis of $E$, and for $\alpha,\beta\in\mathbb{Z}_+^{(\infty)}$, $\mathbf{T}^{\beta*}\mathbf{T}^\alpha1\neq0$ only when $\alpha-\beta\in \mathbb{Z}_+^{(\infty)}$;
       \item [(2)] $C_\Phi$ is an isometry on $\mathbf{H}_\infty^2$;
       \item [(3)] $\mathrm{Ran}\, C_\Phi=E$, that is, to each $F\in E$, there corresponds a unique $G\in\mathbf{H}_\infty^2$ such that $F=G(\phi_1,\phi_2,\cdots)$.
     \end{itemize}
\end{cor}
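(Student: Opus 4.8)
The plan is to reduce all three assertions to the single structural fact that $\{\mathbf{T}^\alpha 1:\alpha\in\mathbb{Z}_+^{(\infty)}\}$ is an orthonormal basis of $E$; once part (1) is in hand, parts (2) and (3) are purely formal transport-of-basis arguments. First I would record the two features supplied by the strengthened hypotheses. Since each $\phi_n$ is inner, the multiplication operator $M_{\phi_n}$ is an isometry on $\mathbf{H}_\infty^2$, so $M_{\phi_n}^*M_{\phi_n}=I$; since the $\phi_n$ have mutually independent variables, $\mathbf{T}=(M_{\phi_1},M_{\phi_2},\cdots)$ is a doubly commuting family (exactly as invoked in the proof of Proposition \ref{compose independent bounded functions}), so $M_{\phi_m}$ and $M_{\phi_n}^*$ commute for $m\neq n$; and $\phi_n(0)=0$ gives $M_{\phi_n}^*1=\overline{\phi_n(0)}\,1=0$. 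Note also that $\mathbf{T}^\alpha 1=\prod_n\phi_n^{\alpha_n}$ lies in $\mathbf{H}_\infty^2$, so no coextension is needed here and all inner products can be computed directly in $\mathbf{H}_\infty^2$.

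For part (1), orthonormality follows by isolating one coordinate. Each $\mathbf{T}^\alpha 1$ has norm $1$, being the image of $1$ under the product of commuting isometries $M_{\phi_n}^{\alpha_n}$. For distinct $\alpha,\beta$ I would choose $n$ with $\alpha_n<\beta_n$; using double commutation and $M_{\phi_n}^*M_{\phi_n}=I$ one gets $M_{\phi_n}^{*\beta_n}\mathbf{T}^\alpha 1=\big(\prod_{m\neq n}M_{\phi_m}^{\alpha_m}\big)M_{\phi_n}^{*(\beta_n-\alpha_n)}1=0$, whence $\langle \mathbf{T}^\alpha 1,\mathbf{T}^\beta 1\rangle=\langle M_{\phi_n}^{*\beta_n}\mathbf{T}^\alpha 1,\mathbf{T}^{\beta-\beta_ne_n}1\rangle=0$. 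Because $\mathbf{T}$ is commutative, every word in the $M_{\phi_n}$ applied to $1$ reduces to some $\mathbf{T}^\alpha 1$, so $E=\overline{\mathrm{span}}\{\mathbf{T}^\alpha 1\}$ and this orthonormal set is a basis. The very same factorization, carried out coordinatewise, shows that $\mathbf{T}^{\beta*}\mathbf{T}^\alpha 1=\prod_n\big(M_{\phi_n}^{*\beta_n}M_{\phi_n}^{\alpha_n}\big)1$ vanishes as soon as some $\beta_n>\alpha_n$ (that factor collapses to $M_{\phi_n}^{*(\beta_n-\alpha_n)}1=0$), so nonvanishing forces $\beta_n\le\alpha_n$ for all $n$, i.e. $\alpha-\beta\in\mathbb{Z}_+^{(\infty)}$.

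For parts (2) and (3) I would observe that $C_\Phi(\zeta^\alpha)=\mathbf{T}^\alpha 1$, so on a polynomial $p=\sum_\alpha c_\alpha\zeta^\alpha$ we have $C_\Phi(p)=\sum_\alpha c_\alpha\mathbf{T}^\alpha 1$, and the orthonormality from (1) gives $\|C_\Phi(p)\|^2=\sum_\alpha|c_\alpha|^2=\|p\|^2$. Since polynomials are dense and $C_\Phi$ is bounded (with $\|C_\Phi\|\le 1$ because $\Phi(0)=0$, by Proposition \ref{compose independent bounded functions}), $C_\Phi$ extends to an isometry on $\mathbf{H}_\infty^2$, proving (2). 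An isometry has closed range, and $\mathrm{Ran}\,C_\Phi$ is the closed linear span of $\{C_\Phi(\zeta^\alpha)\}=\{\mathbf{T}^\alpha 1\}$, which equals $E$ by (1); injectivity of the isometry yields the uniqueness of $G$, giving (3).

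The only genuine computation is the orthonormality in part (1), and even that is routine once the three ingredients (double commutation from independent variables, the isometry relation from inner-ness, and $M_{\phi_n}^*1=0$ from $\phi_n(0)=0$) are in place. Thus I do not expect a serious obstacle; the point requiring the most care is simply the bookkeeping that lets one isolate a single coordinate in the product $\mathbf{T}^\alpha$ — justified throughout by commutativity — together with the observation that the joint invariant subspace $E$ generated by $1$ is exactly the closed span of the monomials $\mathbf{T}^\alpha 1$.
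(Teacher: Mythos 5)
Your proposal is correct and follows essentially the same route as the paper: the paper notes that, the $\phi_n$ being inner with $\phi_n(0)=0$, the sequence $\mathbf{T}$ is itself a doubly commuting sequence of pure isometries, so the orthogonality computation $\langle\mathbf{T}^\alpha 1,\mathbf{T}^\beta 1\rangle=\langle M_{\phi_n}^{*\beta_n}\mathbf{T}^\alpha 1,\mathbf{T}^{\beta-\beta_n e_n}1\rangle=0$ already carried out in the proof of Proposition \ref{compose independent bounded functions} (there for the coextension $\mathbf{V}$) applies verbatim, yielding (1) and (2), with (3) then following because $\mathrm{Ran}\,C_\Phi$ is the closed span of $\{\mathbf{T}^\alpha 1\}=E$. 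You simply spell out explicitly the computation the paper cites by reference; no gap.
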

\begin{proof}
Since $\phi_n$ is inner  for each $n\in\mathbb{N}$, $\mathbf{T}=(M_{\phi_1},M_{\phi_2,},\cdots)$ is a doubly commuting  sequence of pure isometries. Conclusion (1) and (2) thus immediately follow from the proof of Proposition \ref{compose independent bounded functions}. Consequently,
$\mathrm{Ran}\,C_\Phi$ coincides with
the closure of the set
$$\{C_\Phi(p):p\ \mathrm{is\ a\ polynomial}\}=\{p(\mathbf{T})1:p\ \mathrm{is\ a\ polynomial}\}.$$
This proves (3).
\end{proof}

Proposition \ref{compose independent inner functions} follows immediately from this corollary.
\vskip1.5mm
\noindent\textbf{Proof of Proposition \ref{compose independent inner functions}. }
Put $\lambda=(\eta_1(0),\eta_2(0),\cdots)$ and
$$\vartheta=(\vartheta_1,\vartheta_2,\cdots)=\Psi_\lambda(\eta_1,\eta_2,\cdots).$$ It follows from Corollary \ref{cor of Propistion composite} (2) that for any polynomial $p$,
\begin{equation*}
  \begin{split}
     \|p(\eta_1,\eta_2,\cdots)\| & =\|(p\circ\Psi_\lambda)(\Psi_\lambda(\eta_1,\eta_2,\cdots))\| \\
       & =\|(C_{\Psi_\lambda}(p))(\vartheta_1,\vartheta_2,\cdots)\| \\
       &=\|C_{\vartheta}(C_{\Psi_\lambda}(p))\|\\
       & =\|C_{\Psi_\lambda}(p)\|.
  \end{split}
\end{equation*}
This together with Lemma \ref{boundedness of CPsilambda} gives Proposition \ref{compose independent inner functions}.
$\hfill \square $
\vskip2mm

Now let $L$ denote the linear span of $$\{\mathbf{K}_\lambda:\lambda\in\mathbb{D}_2^\infty\  \mathrm{with\ finitely\ many\ nonzero\ entries}\}.$$
Then  $L$ is a subspace of $\mathbf{A}_\infty$. We also need the following.

\begin{lem} \label{L is w* dense} The subspace $L$ is uniformly dense in $\mathbf{A}_\infty$.
\end{lem}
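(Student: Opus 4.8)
The plan is to reduce the statement to a one-variable approximation fact and then tensorize. Since $\mathbf{A}_\infty$ is by definition the uniform closure of the polynomial algebra $\mathbb{C}[\zeta_1,\zeta_2,\dots]$, it suffices to show that every monomial $\zeta^\alpha=\zeta_1^{\alpha_1}\cdots\zeta_N^{\alpha_N}$ lies in the uniform closure of $L$: the closure of a subspace is a subspace, so once it contains all monomials it contains the dense polynomial subspace, forcing $\overline{L}=\mathbf{A}_\infty$ (note $L\subseteq\mathbf{A}_\infty$, since a kernel with finitely many nonzero entries is continuous on $\overline{\mathbb{D}}^\infty$). Recall that for $\lambda$ with only $\lambda_1,\dots,\lambda_N$ nonzero one has $\mathbf{K}_\lambda(\zeta)=\prod_{n=1}^N(1-\overline{\lambda_n}\zeta_n)^{-1}$, so $L$ is exactly the linear span of finite products of single-variable kernels $k_a(\zeta_n)=(1-\bar a\zeta_n)^{-1}$ ($a\in\mathbb{D}$) in distinct variables.

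First I would establish the one-variable core: each power $z^j$, viewed in the disk algebra, is a uniform limit on $\overline{\mathbb{D}}$ of linear combinations of the kernels $k_a$, $a\in\mathbb{D}$. The key is an explicit root-of-unity average. Fix $\rho\in(0,1)$ and, for an integer $m>j$, put $\omega=e^{2\pi i/m}$. Since $k_{\rho\omega^l}(z)=\sum_{k\ge0}\rho^k\omega^{-lk}z^k$, the relation $\frac1m\sum_{l=0}^{m-1}\omega^{l(j-k)}=\mathbf{1}_{\{k\equiv j\,(\mathrm{mod}\ m)\}}$ gives
\[
\frac{1}{m\rho^j}\sum_{l=0}^{m-1}\omega^{lj}\,k_{\rho\omega^l}(z)=\frac{z^j}{1-\rho^m z^m}.
\]
Hence this finite combination differs from $z^j$ by $z^j\rho^m z^m(1-\rho^m z^m)^{-1}$, whose supremum over $\overline{\mathbb{D}}$ is at most $\rho^m/(1-\rho^m)\to0$ as $m\to\infty$. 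Thus $z^j$ lies in the uniform closure of the span of single-variable kernels.

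Next I would tensorize. Given $\zeta^\alpha=\prod_{n=1}^N\zeta_n^{\alpha_n}$, choose by the previous step, for each $n$, a finite combination $P_n(\zeta_n)=\sum_j c_j^{(n)}k_{\lambda_j^{(n)}}(\zeta_n)$ with $\sup_{\overline{\mathbb{D}}}|\zeta_n^{\alpha_n}-P_n|$ small (hence $\|P_n\|_\infty$ uniformly bounded). Expanding $\prod_{n=1}^N P_n(\zeta_n)$ yields a finite linear combination of terms $\prod_{n=1}^N k_{\lambda_{j_n}^{(n)}}(\zeta_n)=\mathbf{K}_\mu$ with $\mu=(\lambda_{j_1}^{(1)},\dots,\lambda_{j_N}^{(N)},0,0,\dots)$, each of which belongs to $L$. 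Because $\zeta^\alpha$ depends only on finitely many coordinates, its sup-norm over $\overline{\mathbb{D}}^\infty$ equals its sup-norm over $\overline{\mathbb{D}}^N$, and a standard telescoping estimate (using $\|\zeta_n^{\alpha_n}\|_\infty=1$ together with the uniform bounds on the $P_n$) shows $\prod_{n=1}^N P_n\to\zeta^\alpha$ uniformly. Therefore $\zeta^\alpha\in\overline{L}$, completing the reduction and hence the proof.

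The only real obstacle is the sup-norm (rather than merely $H^2$-norm) approximation of powers by kernel combinations in a single variable, and this is exactly what the explicit root-of-unity average resolves in closed form. The passage to infinitely many variables is then purely formal, since products of kernels in disjoint variables are again reproducing kernels $\mathbf{K}_\mu$ with finitely many nonzero entries, so no new analytic input is needed beyond the elementary telescoping bound.
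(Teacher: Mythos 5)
Your proof is correct, but it follows a genuinely different route from the paper's. The paper gets the monomial $\zeta^\alpha$ into $\overline{L}$ softly: it invokes the density of $L\cap H^2(\mathbb{D}^n)$ in $H^2(\mathbb{D}^n)$ (the standard reproducing-kernel fact), upgrades $H^2$-convergence to uniform convergence on $r\,\overline{\mathbb{D}}^n$, and then uses the dilation identity $(\mathbf{K}_\lambda)_r=\mathbf{K}_{r\lambda}$ to conclude that the dilated approximants stay in $L$ and converge uniformly to $r^{|\alpha|}\zeta^\alpha$; letting $r\to1^-$ finishes. You instead prove the sup-norm approximation directly and in closed form: the root-of-unity average $\frac{1}{m\rho^j}\sum_{l=0}^{m-1}\omega^{lj}k_{\rho\omega^l}(z)=z^j(1-\rho^m z^m)^{-1}$ approximates $z^j$ on $\overline{\mathbb{D}}$ with explicit error $\rho^m/(1-\rho^m)$, and the passage to several variables is the observation that products of one-variable kernels in disjoint variables are again kernels $\mathbf{K}_\mu$ with finitely many nonzero entries, plus a telescoping bound. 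Your argument is more elementary and quantitative --- it needs no $H^2$-density input and no compactness/dilation step, and it produces explicit approximants --- while the paper's is shorter and leans on facts already in its toolkit ($H^2$ kernel density and the behavior of $\mathbf{K}_\lambda$ under dilation). Both are complete; the identity you use checks out, and the reduction to monomials, the membership $L\subseteq\mathbf{A}_\infty$, and the tensorization step are all handled correctly.
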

\begin{proof} It suffices to show that for any $\alpha\in\mathbb{Z}_+^{(\infty)}$, the monomial $\zeta^\alpha$ belongs to the uniform closure of $L$.
  For this, fix a  $\alpha=(\alpha_1,\cdots,\alpha_n,0,0,\cdots)\in\mathbb{Z}_+^{(\infty)}$.
  Since $L\bigcap H^2(\mathbb{D}^n)$ is  dense in $H^2(\mathbb{D}^n)$, we can take a sequence $\{\phi_k\}_{k\in\mathbb{N}}$ in $L$ depending only on the variables $\zeta_1,\cdots,\zeta_n$ and converging to $\zeta^\alpha$ in the norm of $H^2(\mathbb{D}^n)$. In particular, $\{\phi_k\}_{k\in\mathbb{N}}$ converges uniformly to $\zeta^\alpha$ on $r\mathbb{D}^n$ for each $0<r<1$. This means that $\|(\phi_k)_r-r^{|\alpha|}\zeta^\alpha\|_\infty\rightarrow0$ as $k\rightarrow\infty$, where $|\alpha|=\alpha_1+\cdots+\alpha_n$ and $F_r$ denotes the function
  $F_r(\zeta)=F(r\zeta)$ for $F\in \mathbf{H}_\infty^2$.
  It remains to prove that $\phi_r\in L$ for any $\phi\in L$ and $0<r<1$.
  This can be deduced directly from the fact that if  $\lambda\in\mathbb{D}_2^\infty$, then $(\mathbf{K}_\lambda)_r=\mathbf{K}_{r\lambda}$.
\end{proof}

We are ready to prove Theorem \ref{compose variable-independent inner functions}.

\noindent\textbf{Proof of Theorem \ref{compose variable-independent inner functions}.}
According to Proposition \ref{compose independent bounded functions} we only need  to  verify the necessity. We suppose that $F(\eta_1,\eta_2,\cdots)$ is cyclic, and
 first assume that  $\eta_n(0)=0$ for each $n\in\mathbb{N}$. Consider the subspace $$M=\{F(\eta_1,\eta_2,\cdots)\phi:\phi\in L\}$$
 of $\mathbf{H}_\infty^2$.
By Lemma \ref{L is w* dense} and the cyclicity of $F(\eta_1,\eta_2,\cdots)$, the constant function $1$ belongs to  the norm closure of $M$. Then there exists a sequence $\{\phi_k\}_{k\in\mathbb{N}}$ in $L$ such that
$$\|F(\eta_1,\eta_2,\cdots)\phi_k-1\|\rightarrow0,\quad k\rightarrow\infty.$$

Let $E$ denote the joint invariant subspace  generated by the function $1$ for the sequence $\mathbf{V}=(M_{\eta_1},M_{\eta_2},\cdots)$ of multiplication operators, and $P$ denote the orthogonal projection from $\mathbf{H}_\infty^2$ onto $E$.
Below we will prove \begin{equation}\label{the projection of Fphi}
              P(F(\eta_1,\eta_2,\cdots)\phi_k)=F(\eta_1,\eta_2,\cdots)P\phi_k,\quad k\in\mathbb{N}.
              \end{equation}
Note that $\mathbf{V}$ is a doubly commuting sequence of pure isometries. By Corollary \ref{cor of Propistion composite} (1),  $\{\mathbf{V}^\alpha1:\alpha\in\mathbb{Z}_+^{(\infty)}\}$ constitutes an  orthonormal basis of $E$, and for $\alpha,\beta\in\mathbb{Z}_+^{(\infty)}$, $\mathbf{T}^{\beta*}\mathbf{T}^\alpha1\neq0$ only when $\alpha-\beta\in \mathbb{Z}_+^{(\infty)}$. Therefore, for any $\phi\in \mathbf{H}^\infty$, \begin{equation*}
       \begin{split}
          P(\mathbf{V}^\beta\phi)
          & =\sum_{\alpha\in\mathbb{Z}_+^{(\infty)}}\langle\mathbf{V}^\beta\phi,\mathbf{V}^\alpha1\rangle\mathbf{V}^\alpha1 \\
          & =\sum_{\alpha-\beta\in\mathbb{Z}_+^{(\infty)}}\langle\phi,\mathbf{V}^{\beta*}\mathbf{V}^\alpha1\rangle\mathbf{V}^\alpha1 \\
            & =\sum_{\gamma\in\mathbb{Z}_+^{(\infty)}}\langle\phi,\mathbf{V}^\gamma1\rangle\mathbf{V}^\beta\mathbf{V}^\gamma1 \\
            & =\mathbf{V}^\beta\biggl(\sum_{\gamma\in\mathbb{Z}_+^{(\infty)}}\langle\phi,\mathbf{V}^\gamma1\rangle\mathbf{V}^\gamma1\biggr) \\
            & =\mathbf{V}^\beta P\phi,
       \end{split}
     \end{equation*}
and thus $$P(p(\eta_1,\eta_2,\cdots)\phi)=P(p(\mathbf{V})\phi)=p(\eta_1,\eta_2,\cdots)P\phi$$ holds for any polynomial $p$. Now choose
a sequence $\{p_l\}_{l\in\mathbb{N}}$ of polynomials that converges to  $F$ in the norm of $\mathbf{H}_\infty^2$. Then
$\{P(p_l(\eta_1,\eta_2,\cdots)\phi)\}_{l\in\mathbb{N}}$ converges to  $P(F(\eta_1,\eta_2,\cdots)\phi)$ in the norm of $\mathbf{H}_\infty^2$ by Proposition \ref{compose independent bounded functions}.
On the other hand, since $\{p_l(\eta_1,\eta_2,\cdots)P\phi\}_{l\in\mathbb{N}}$ converges pointwise to $F(\eta_1,\eta_2,\cdots)P\phi$,
it follows that   $$P(F(\eta_1,\eta_2,\cdots)\phi)=F(\eta_1,\eta_2,\cdots)P\phi,$$ and hence  we have proved (\ref{the projection of Fphi}).

 It follows from  Corollary \ref{cor of Propistion composite} (3) that
for each $k\in\mathbb{N}$,
$P\phi_k=G_k(\eta_1,\eta_2,\cdots)$ for some $G_k\in \mathbf{H}_\infty^2$. By (\ref{the projection of Fphi}), we have $$P(F(\eta_1,\eta_2,\cdots)\phi_k)=F(\eta_1,\eta_2,\cdots)P\phi_k=F(\eta_1,\eta_2,\cdots)G_k(\eta_1,\eta_2,\cdots).$$
Applying  Corollary \ref{cor of Propistion composite} (2) (3), we have $FG_k\in \mathbf{H}_\infty^2$ and
\begin{equation*}
       \begin{split}
          \|FG_k-1\|
          & =\|(FG_k)(\eta_1,\eta_2,\cdots)-1\| \\
          & =\|P(F(\eta_1,\eta_2,\cdots)\phi_k-1)\| \\
            & \leq\|F(\eta_1,\eta_2,\cdots)\phi_k-1\|\rightarrow0,\quad k\rightarrow\infty.
       \end{split}
     \end{equation*}
For this end,  it suffices to prove that to any $\phi\in L$, there corresponds a function $G\in \mathbf{H}^\infty$ such that $P\phi=G(\eta_1,\eta_2,\cdots)$. For this, fix finitely many points $\xi_1,\cdots,\xi_l\ (l\in\mathbb{N})$ in $\mathbb{D}$. Put $\xi=(\xi_1,\cdots,\xi_l,0,0,\cdots)$ and $\mu=(\eta_1(\xi),\eta_2(\xi),\cdots)$. Since $\eta_1,\eta_2,\cdots$ have mutually independent variables, one can take $N\in\mathbb{N}$ sufficiently large so that  for any $n>N$, $\eta_n$ only depends on the variables $\zeta_{l+1},\zeta_{l+2},\cdots$. Then $$\eta_n(\xi)=\eta_n(\xi_1,\cdots,\xi_l,0,0,\cdots)=\eta_n(0)=0,$$ which shows that $\mu$ only has finitely many nonzero entries.
In particular, $\mathbf{K}_\mu$ is bounded.
Moreover, we have \begin{equation*}
       \begin{split}
          P\mathbf{K}_\xi
          & =\sum_{\alpha\in\mathbb{Z}_+^{(\infty)}}\langle\mathbf{K}_\xi,\mathbf{V}^\alpha1\rangle\mathbf{V}^\alpha1 \\
            & =\sum_{\alpha\in\mathbb{Z}_+^{(\infty)}}\overline{\langle\mathbf{V}^{\alpha}1,\mathbf{K}_\xi\rangle}\mathbf{V}^\alpha1 \\
            &
            =\sum_{\alpha\in\mathbb{Z}_+^{(\infty)}}\overline{\mathbf{V}^\alpha1(\xi)}\cdot\mathbf{V}^\alpha1 \\
            &=\sum_{\alpha\in\mathbb{Z}_+^{(\infty)}}\overline{\mu^\alpha}\mathbf{V}^\alpha1 \\
            & =\mathbf{K}_\mu(\eta_1,\eta_2,\cdots).
       \end{split}
     \end{equation*}
This proves the case when $\eta_n(0)=0$ for each $n\in\mathbb{N}$.

     For the general case, put $\lambda=(\eta_1(0),\eta_2(0),\cdots)$ and $(\vartheta_1,\vartheta_2,\cdots)=\Psi_\lambda(\eta_1,\eta_2,\cdots)$. It follows from the previous argument that $$F(\eta_1,\eta_2,\cdots)=(F\circ\Psi_\lambda)(\Psi_\lambda(\eta_1,\eta_2,\cdots))
     =(C_{\Psi_\lambda}(F))(\vartheta_1,\vartheta_2,\cdots)$$ is cyclic if and only if $C_{\Psi_\lambda}(F)$ is cyclic, which is further equivalent to that $F$ is cyclic by Lemma \ref{boundedness of CPsilambda}.
$\hfill \square $
\vskip2mm

The following corollary generalizes \cite[Theorem 3.4 (4)]{Ni1}.


\begin{cor} \label{F=f(eta)} Suppose $F=f(\eta)$, where $f\in H^2(\mathbb{D})$ and $\eta$ is a nonconstant inner function in $H_\infty^2$. Then $F$ is cyclic if and only if $f$ is an outer function in $H^2(\mathbb{D})$.
\end{cor}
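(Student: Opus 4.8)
The plan is to transfer the question to the classical Hardy space $H^2(\mathbb{D})$ by exploiting the fact that $M_\eta$ is an isometry, and then to invoke Beurling's theorem that the cyclic vectors for the shift on $H^2(\mathbb{D})$ are precisely the outer functions. The central device will be the pushforward measure $\nu=\eta^{*}_{*}\rho$ on $\mathbb{T}$, the distribution of the boundary values of $\eta$ under the Haar measure $\rho$ of $\mathbb{T}^\infty$. Since $\eta$ is inner, $M_\eta$ is an isometry on $\mathbf{H}_\infty^2$, so for $j\geq k\geq0$ one computes $\langle\eta^{j},\eta^{k}\rangle=\langle\eta^{j-k},1\rangle=\eta(0)^{\,j-k}$; equivalently $\int_{\mathbb{T}}w^{n}\,\mathrm{d}\nu=\eta(0)^{n}$ for every $n\geq0$. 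These are exactly the moments of the harmonic measure of $\mathbb{D}$ at $\eta(0)$, so $\nu=P_{\eta(0)}\,\mathrm{d}m$ with $P_{\eta(0)}$ the Poisson kernel; in particular $\nu$ is absolutely continuous with respect to Lebesgue measure $m$ with bounded density, and $\nu(\mathbb{T})=1$. Changing variables then yields the key identity
\[
\|g(\eta)\|_{\mathbf{H}_\infty^2}^{2}=\int_{\mathbb{T}}|g|^{2}\,\mathrm{d}\nu,\qquad g\in H^2(\mathbb{D}),
\]
which in particular shows that $C_\eta\colon g\mapsto g(\eta)$ maps $H^2(\mathbb{D})$ boundedly into $\mathbf{H}_\infty^2$ (consistent with Proposition~\ref{compose independent inner functions}).

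For the sufficiency I would assume $f$ is outer. By Beurling's theorem $f$ is cyclic for the shift on $H^2(\mathbb{D})$, so there are polynomials $p_k$ with $\|p_kf-1\|_{H^2(\mathbb{D})}\to0$. Applying the identity above to $g=p_kf-1$ and using that $\nu$ has bounded density against $m$, I obtain $\|p_k(\eta)F-1\|_{\mathbf{H}_\infty^2}^2=\int_{\mathbb{T}}|p_kf-1|^2\,\mathrm{d}\nu\to0$. Since each $p_k(\eta)$ lies in $\mathbf{H}^\infty$, Lemma~\ref{polynomials w* dense in Hinfty} gives $p_k(\eta)F\in[F]$, whence $1\in[F]$ and $F$ is cyclic.

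For the necessity I would argue by contraposition. If $f$ is not outer, I factor $f=\theta f_o$ with $\theta$ a nonconstant inner function on $\mathbb{D}$ and $f_o\in H^2(\mathbb{D})$. The heart of the matter is to show that $\theta(\eta)$ is again inner on $\mathbb{D}_2^\infty$, and this falls out of the same identity: $\theta(\eta)\in\mathbf{H}^\infty$ with $\|\theta(\eta)\|_\infty\leq1$, while $\|\theta(\eta)\|_{\mathbf{H}_\infty^2}^2=\int_{\mathbb{T}}|\theta|^2\,\mathrm{d}\nu=\nu(\mathbb{T})=1$, since $|\theta|=1$ holds $m$-a.e. and hence $\nu$-a.e. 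As the boundary modulus of $\theta(\eta)$ is at most $1$ almost everywhere while its square integrates to $1=\rho(\mathbb{T}^\infty)$, it must equal $1$ almost everywhere, so $\theta(\eta)$ is inner; it is nonconstant because $\eta$ and $\theta$ are. Consequently, writing $F=\theta(\eta)\,f_o(\eta)$ and noting that $qF\in\theta(\eta)\mathbf{H}_\infty^2$ for every coordinate polynomial $q$, I get $[F]\subseteq\theta(\eta)\mathbf{H}_\infty^2$, a proper closed invariant subspace (as $M_{\theta(\eta)}$ is a nonsurjective isometry); hence $F$ is not cyclic.

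The main obstacle is the necessity direction, specifically the claim that the composite $\theta\circ\eta$ of the inner factor $\theta$ with the inner function $\eta$ is inner. I expect to handle this not through a delicate pointwise boundary-value identity for composites, but through the $L^2$ computation above, where the single equality $\|\theta(\eta)\|_{\mathbf{H}_\infty^2}=1$ combined with the trivial bound $\|\theta(\eta)\|_\infty\leq1$ forces unimodular boundary values. A pleasant feature of this route is that no preliminary reduction to $\eta(0)=0$ is needed, because every estimate is expressed directly through the measure $\nu=P_{\eta(0)}\,\mathrm{d}m$.
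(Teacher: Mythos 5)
Your argument is correct, but it follows a genuinely different route from the paper's. The paper dispatches this corollary in three lines by a variable-splitting trick: replace $\eta$ by $\eta'(\zeta)=\eta(\zeta_1,\zeta_3,\zeta_5,\cdots)$, regard $f$ as the one-variable function $\tilde f(\zeta)=f(\zeta_1)$, write $F'=\tilde f(\eta',\zeta_2,\zeta_4,\cdots)$, and invoke Theorem \ref{compose variable-independent inner functions} together with Lemma \ref{cyclicity in H2(S)}, so that everything reduces to Beurling's theorem in the single variable $\zeta_1$. You instead reprove the needed special case from scratch: identifying the pushforward $\nu=(\eta^*)_*\rho$ with harmonic measure $P_{\eta(0)}\,\mathrm{d}m$ via the moment computation $\langle\eta^j,\eta^k\rangle=\eta(0)^{j-k}$ gives the change-of-variables identity, which yields sufficiency directly from Beurling's theorem, and necessity via the classical inner--outer factorization once the extremal equality $\|\theta(\eta)\|_2=1=\|\theta(\eta)\|_\infty$ forces $\theta\circ\eta$ to be inner. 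The paper's route buys brevity, since the heavy lifting is already done in Section 5; yours buys independence from the dilation-theoretic apparatus there, an explicit reusable formula $\|g(\eta)\|^2=\int_{\mathbb{T}}|g|^2P_{\eta(0)}\,\mathrm{d}m$, and a concrete obstructing invariant subspace $\theta(\eta)\mathbf{H}_\infty^2$ (proper because $|\theta(\eta(0))|<1$). Do spell out the extension of the identity from polynomials to general $g\in H^2(\mathbb{D})$ by approximation, using the bounded density of $\nu$ against $m$ and pointwise convergence on $\mathbb{D}_2^\infty$ to identify the limit with the pointwise composite; this is routine but is the one step you currently gloss over.
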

\begin{proof} Set
$\eta'(\zeta)=\eta(\zeta_1,\zeta_3,\cdots,\zeta_{2n-1},\cdots)$
and $F'=f(\eta')$. Then $\eta'$ is also a nonconstant inner function, and by Theorem \ref{compose variable-independent inner functions}, $F'$ is cyclic if and only if $F$ is cyclic. Let $\tilde{f}$
denote the image of $f$ under the natural imbedding $i_1:H^2(\mathbb{D})\rightarrow \mathbf{H}_\infty^2$, that is,
$\tilde{f}(\zeta)=f(\zeta_1)$. It is also easy to see that $F'(\zeta)=\tilde{f}(\eta'(\zeta),\zeta_2,\zeta_4,\cdots)$. Note that
$\eta',\zeta_2,\zeta_4,\cdots$ is a sequence of nonconstant inner functions with mutually independent variables, applying Theorem \ref{compose variable-independent inner functions} and Lemma
\ref{cyclicity in H2(S)} we come to the desired conclusion.
\end{proof}

Applying Theorem \ref{compose variable-independent inner functions}
we also obtain the following.

\begin{exam}
  Suppose that $\{\eta_n\}_{n\in\mathbb{N}}$ is a sequence of  nonconstant inner functions with mutually independent variables and $\sum_{n=1}^{\infty}|\eta_n(0)|<\infty$. By Theorem \ref{compose variable-independent inner functions} and \cite[Theorem 5.9]{HLS}, we see that for any square-summable sequence $\{a_n\}_{n\in\mathbb{N}}$,
  $1+\sum_{n=1}^{\infty}a_n\eta_n$ is cyclic if and only if $1+\sum_{n=1}^{\infty}a_n\zeta_n$ is cyclic, if and only if $\sum_{n=1}^{\infty}|a_n|\leq1$.
\end{exam}

\begin{rem}
  The assumption in Theorem \ref{compose variable-independent inner functions} that each $\eta_n$ is inner cannot be dropped. In fact, for some fixed $0<r<1$, put $\phi_n(\zeta)=r^n\zeta_n\ (n\in\mathbb{N})$. From the proof of Proposition 3.8, we see that if $F\in\mathbf{H}_\infty^2$ has no zeros in $\mathbb{D}_2^\infty$, then
  $F(\phi_1,\phi_2,\cdots)\in\mathbf{A}_{R,\infty}$
for $1<R<\frac{1}{r}$, and therefore by Theorem 3.1,  $F(\phi_1,\phi_2,\cdots)$ is cyclic. On the other hand, the function $F$ can be chosen to be non-cyclic (for instance, see Remark \ref{r1}).
\end{rem}

\section{The cyclicity of $\mathbf{H}_\infty^p$-functions ($p>2$)}

In this section, we will prove


\begin{thm} \label{Hp multiply Hq} (1) If $F\in \mathbf{H}_\infty^p\ (p>2)$ and there exists $G\in \mathbf{H}_\infty^q\ (q>0)$, such that $FG$ belongs to $\mathbf{H}_\infty^2$ and  is cyclic, then $F$ is also cyclic.

\noindent(2) If $F\in \mathbf{H}^\infty$ and $G\in \mathbf{H}_\infty^2$, then $FG$ is  cyclic if and only if  both $F$ and $G$ are cyclic.
\end{thm}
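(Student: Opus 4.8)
The plan is to reduce everything to the single assertion that $GF\in[F]$, and then read off both parts. For part (1), since $p>2$ and $\rho$ is a probability measure we have $\mathbf H_\infty^p\subseteq\mathbf H_\infty^2$, so $F\in\mathbf H_\infty^2$ and $[F]$ is meaningful; once $GF\in[F]$ is known, the invariance and closedness of $[F]$ give $[FG]\subseteq[F]$, and the cyclicity of $FG$ (that is, $[FG]=\mathbf H_\infty^2$) forces $[F]=\mathbf H_\infty^2$. For part (2), note first that $FG\in\mathbf H_\infty^2$ because $|FG|\le\|F\|_\infty|G|$. If $FG$ is cyclic, then $F$ is cyclic by the bounded case of the same reduction, which is elementary: $G$ is a norm limit of polynomials $q_k$, and since $F$ is bounded $M_F$ is continuous on $\mathbf H_\infty^2$, so $q_kF\to GF$ with $q_kF\in[F]$, giving $GF\in[F]$. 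Meanwhile $G$ is cyclic directly by Lemma \ref{polynomials w* dense in Hinfty}: writing $FG=\phi G$ with $\phi=F$ bounded holomorphic shows $FG\in[G]$, hence $[G]=\mathbf H_\infty^2$. Conversely, if $F$ and $G$ are both cyclic, choose polynomials $q_j$ with $q_jG\to1$; then $q_j(FG)=F(q_jG)\to F$ in $\mathbf H_\infty^2$ (again $M_F$ is continuous), so $F\in[FG]$, and the cyclicity of $F$ upgrades this to $[FG]=\mathbf H_\infty^2$.

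Thus the whole weight rests on the implication $F\in\mathbf H_\infty^p\ (p>2),\ G\in\mathbf H_\infty^q,\ FG\in\mathbf H_\infty^2\ \Rightarrow\ GF\in[F]$. I would prove this by approximating $G$ with its dilations $G_{\mathbf r}(\zeta)=G(r\zeta_1,r^2\zeta_2,\cdots)$. As in the proof of Proposition \ref{definition of Hp}, each $G_{\mathbf r}$ lies in $\mathbf A_\infty$ and is therefore a bounded holomorphic function, so Lemma \ref{polynomials w* dense in Hinfty} gives $G_{\mathbf r}F\in[F]$ for every $0<r<1$. Moreover $G_{\mathbf r}\to G$ pointwise on $\mathbb D_2^\infty$, since convergence in $\mathbf H_\infty^q$ forces pointwise convergence by the evaluation estimate (\ref{|F|p}); hence $G_{\mathbf r}F\to GF$ pointwise, i.e. $\langle G_{\mathbf r}F,\mathbf K_\lambda\rangle\to\langle GF,\mathbf K_\lambda\rangle$ for every reproducing kernel $\mathbf K_\lambda$. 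Because the linear span of the kernels is dense in $\mathbf H_\infty^2$ and $[F]$ is weakly closed, it suffices to produce a uniform bound $\sup_{0<r<1}\|G_{\mathbf r}F\|_{\mathbf H_\infty^2}<\infty$: this promotes the pointwise convergence to weak convergence $G_{\mathbf r}F\rightharpoonup GF$ and places the limit $GF$ in $[F]$.

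The main obstacle is exactly this $\mathbf H_\infty^2$-bound on $G_{\mathbf r}F$, and it is here that the hypothesis $p>2$ must genuinely enter. The difficulty is that for small $q$ the factor $G$ need not lie in $\mathbf H_\infty^2$, so one cannot simply split $\int_{\mathbb T^\infty}|G_{\mathbf r}|^2|F|^2\,\mathrm d\rho$ by H\"older against a norm of $G$; a direct H\"older argument of that kind only covers the comfortable range $\tfrac1p+\tfrac1q\le\tfrac12$. I would instead exploit the higher integrability of $F$: since $p>2$ we have $|F|^2\in L^{p/2}(\rho)$ with $p/2>1$, and the partial dilation is a Poisson-type average $G_{\mathbf r}(\zeta)=\int_{\mathbb T^\infty}G(w\zeta)\,\mathrm d\mu_r(w)$ with $\mu_r$ a probability measure. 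Passing to the one-dimensional slices $z\mapsto F(z\zeta_1,z^2\zeta_2,\cdots)$ and $z\mapsto G(z\zeta_1,z^2\zeta_2,\cdots)$ exactly as in the proof of Proposition \ref{definition of Hp}, the estimate localizes to a weighted one-variable Hardy-space inequality controlling $\|G_r f\|_{H^2(\mathbb D)}$ by $\|gf\|_{H^2(\mathbb D)}$, in the spirit of Lemma \ref{l2}; the weight $|f|^2$ now being integrable to the exponent $p/2>1$ supplies precisely the room needed to dominate the dilated integrand and to deduce $\limsup_{r\to1^-}\|G_{\mathbf r}F\|_{\mathbf H_\infty^2}\le\|GF\|_{\mathbf H_\infty^2}$. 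Combined with Fatou's inequality applied to the a.e.\ boundary convergence $G_{\mathbf r}\to G$, this forces $\|G_{\mathbf r}F\|_{\mathbf H_\infty^2}\to\|GF\|_{\mathbf H_\infty^2}$, so the weak convergence above is in fact norm convergence and $GF\in[F]$. Carrying this weighted estimate through for every $q>0$, and not merely in the range where H\"older suffices, is the technical heart of the argument.
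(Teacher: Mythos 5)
Your reduction to the single claim $FG\in[F]$, your treatment of part (2), and your handling of the H\"older range $\tfrac1p+\tfrac1q\le\tfrac12$ all match the paper and are fine. The gap is in the step you yourself flag as the technical heart: the uniform bound $\sup_{0<r<1}\|FG_{\mathbf r}\|_{\mathbf H_\infty^2}<\infty$, which you hope to extract from a weighted one-variable inequality of the form $\|fg_r\|_{H^2(\mathbb D)}\le C(f)\,\|fg\|_{H^2(\mathbb D)}$ for $f\in H^p$ ($p>2$), $g\in H^q$, $fg\in H^2$. That inequality is false when $q$ is small, and with it the whole dilation strategy collapses. Here is the obstruction. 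Fix $L>0$ and an arc $E'$ of normalized length $\sigma=e^{-qL}$, let $E$ be the adjacent arc of the same length, and let $g$ be the outer function with $|g|=e^{L}$ on $E'$ and $|g|=1$ elsewhere, $f$ the outer function with $|f|=\sigma^{-1/p}$ on $E$, $|f|=e^{-L}$ on $E'$ and $|f|=1$ elsewhere. Then $\|f\|_{H^p}$, $\|g\|_{H^q}$ and $\|fg\|_{H^2}$ are all bounded independently of $L$, and $fg$ is outer, hence cyclic. But for $1-r\asymp\sigma$ and $z\in E$ one has $\log|g(rz)|=(P_r*\log|g|)(z)=L\,(P_r*\chi_{E'})(z)\ge c_0L$ for an absolute constant $c_0>0$, so $\int_E|f|^2|g_r|^2\,\mathrm dm\ge \sigma^{1-2/p}e^{2c_0L}=e^{(2c_0-q(1-2/p))L}$, which tends to infinity with $L$ as soon as $q<2c_0/(1-2/p)$. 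Superposing these spikes at infinitely many scales produces a single pair $(f,g)$ with $f\in H^p$, $g\in H^q$, $fg$ outer in $H^2$, yet $\|fg_{r_k}\|_{H^2}\to\infty$ along a sequence $r_k\to1^-$; lifting via $F(\zeta)=f(\zeta_1)$, $G(\zeta)=g(\zeta_1)$ defeats your bound in $\mathbf H_\infty^2$. The point is that $f\in H^p$ only controls $\int_E|f|^2$ by $|E|^{1-2/p}$, while the Poisson average can amplify $|g|$ near a spike by a factor $e^{c_0L}$ whose square beats $e^{-q(1-2/p)L}$ for small $q$; the extra integrability of $|F|^2$ does not supply the room you need.

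The paper proves $FG\in[F]$ by a route that never dilates $G$ and never leaves the H\"older range. Writing $FG^{t}=F^{1-t}(FG)^{t}$ for $0<t<1$ (legitimate since $F$ and $G$ are zero-free), H\"older gives $FG^{t}\in\mathbf H_\infty^{r(t)}$ with $\tfrac1{r(t)}=\tfrac{1-t}{p}+\tfrac t2<\tfrac12$ and the uniform bound $\|FG^{t}\|_2\le\|F\|_p^{1-t}\|FG\|_2^{t}$, whence $FG^{t}\rightharpoonup FG$ weakly as $t\to1^-$. For fixed $t$ one then chooses $N$ with $\tfrac1r+\tfrac{t}{Nq}\le\tfrac12$, so that $G^{t/N}\in\mathbf H_\infty^{Nq/t}$ and the elementary Lemma \ref{FG in [F]} yields $FG^{ts+t/N}\in[FG^{ts}]$ at each step; iterating $N$ times gives $FG^{t}\in[F]$. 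In other words, the hypothesis $p>2$ enters not through any refined weighted estimate but by opening a small gap $\tfrac12-\tfrac1p>0$ through which one can push arbitrarily small fractional powers of $G$, one H\"older step at a time. If you want to keep your dilation framework, you would have to replace the false uniform inequality with something like this bootstrapping; as written, the argument does not close.
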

It is easy to see that both  \cite[Theorem 5.7 (b)(c)]{HLS}  and \cite[Theorem 3.3 (1)(3)]{Ni1} immediately come from  this theorem. We mention that this theorem is closely related to Theorems 3 and 4 of $\S 2.1$ in \cite{Ni5}.  If both $ F$ and $\frac{1}{F}$ belong to $\mathbf{H}_\infty^2$, one  remains unknown   whether   $F$ is  cyclic or not, even
for $F$ only depending on two variables.

We record the following corollary.

\begin{cor} \label{cor of Hp multiply Hq}  Suppose that $2\leq p,q\leq\infty$  satisfy $\frac{1}{p}+\frac{1}{q}\leq\frac{1}{2}$.
 If $F\in \mathbf{H}_\infty^p$ and $G\in \mathbf{H}_\infty^q$ so that $FG$ is  cyclic, then
 both $F$ and $G$ are cyclic. In particular, if $F$ is a cyclic vector in $\mathbf{H}_\infty^2$,
 then $F^t$ is cyclic  for any $0<t<1$.
\end{cor}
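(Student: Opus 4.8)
The plan is to read off the Corollary from Theorem \ref{Hp multiply Hq}, handling the two assertions separately. For the first assertion I would begin by recording that the product $FG$ really does lie in $\mathbf{H}_\infty^2$: by the generalized H\"older inequality for the spaces $\mathbf{H}_\infty^p$ one has $FG\in\mathbf{H}_\infty^r$ with $\frac1r=\frac1p+\frac1q\le\frac12$, so $r\ge2$ and hence $FG\in\mathbf{H}_\infty^2$, consistent with the cyclicity hypothesis. It then suffices to prove that $F$ is cyclic, since the cyclicity of $G$ follows by interchanging the roles of $F$ and $G$.

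To show $F$ is cyclic I would split into cases according to whether $p>2$ or $p=2$. If $p>2$, then $F\in\mathbf{H}_\infty^p$, $G\in\mathbf{H}_\infty^q$ with $q>0$, and $FG\in\mathbf{H}_\infty^2$ is cyclic, so Theorem \ref{Hp multiply Hq}(1) applies verbatim and yields that $F$ is cyclic. If instead $p=2$, then $\frac1q\le\frac12-\frac1p=0$ forces $q=\infty$; thus $G\in\mathbf{H}^\infty$ and $F\in\mathbf{H}_\infty^2$, and Theorem \ref{Hp multiply Hq}(2) applied to $GF=FG$ shows that both $F$ and $G$ are cyclic. In either case $F$ is cyclic, and the symmetric argument disposes of $G$.

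For the ``in particular'' part, let $F$ be cyclic in $\mathbf{H}_\infty^2$ and $0<t<1$. Since a cyclic vector is necessarily zero-free on the convex, hence simply connected, domain $\mathbb{D}_2^\infty$, the function $F^t:=\exp(t\log F)$ is a well-defined holomorphic function there, with $|F^t|=|F|^t$. The key point is that $F^t\in\mathbf{H}_\infty^{2/t}$: using Proposition \ref{definition of Hp} I would verify that each restriction $(F^t)_{(n)}=(F_{(n)})^t$ satisfies $\int_{\mathbb{T}^n}|(F^t)_{(n)}(r\zeta)|^{2/t}\,\mathrm{d}m_n=\int_{\mathbb{T}^n}|F_{(n)}(r\zeta)|^2\,\mathrm{d}m_n\le\|F\|_{\mathbf{H}_\infty^2}^2$ uniformly in $n$ and $r$, so that $(F^t)_{(n)}\in H^{2/t}(\mathbb{D}^n)$ and the relevant supremum is finite. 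Likewise $F^{1-t}\in\mathbf{H}_\infty^{2/(1-t)}$. Taking $p=2/t$ and $q=2/(1-t)$ gives $\frac1p+\frac1q=\frac12$ with $p,q>2$, while $F^t\cdot F^{1-t}=F$ is cyclic; the first assertion then forces $F^t$ to be cyclic.

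The main obstacle, modest as it is, lies in this last part: one must justify that raising a zero-free $\mathbf{H}_\infty^2$-function to a power $t$ again lands in the Hardy space of the correct exponent. I expect the clean way through is to argue entirely at the level of the finite-variable restrictions, exploiting the identity $|F^t|=|F|^t$ together with the definition of $H^p(\mathbb{D}^n)$ via radial $L^p$-means, so that no delicate inner--outer factorization on the infinite polydisk is required; everything then transfers to $\mathbf{H}_\infty^{2/t}$ through Proposition \ref{definition of Hp}.
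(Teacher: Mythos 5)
Your proposal is correct and follows essentially the route the paper intends: the paper records the corollary as an immediate consequence of Theorem \ref{Hp multiply Hq} (case $p>2$ via part (1), case $p=2$, $q=\infty$ via part (2)), and the ``in particular'' part via the factorization $F=F^t\cdot F^{1-t}$ with $F^t\in\mathbf{H}_\infty^{2/t}$, $F^{1-t}\in\mathbf{H}_\infty^{2/(1-t)}$ --- exactly the power-of-a-zero-free-function device the paper itself uses inside the proof of Theorem \ref{Hp multiply Hq}(1). Your explicit verification that $F^t\in\mathbf{H}_\infty^{2/t}$ through the finite-variable restrictions and Proposition \ref{definition of Hp} correctly supplies the detail the paper leaves implicit.
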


We need some preparations for the proof of Theorem \ref{Hp multiply Hq}.

Suppose that a triple $p,q,r$ of positive numbers satisfies $\frac{1}{p}+\frac{1}{q}=\frac{1}{r}\leq\frac{1}{2}$.
Then for any $F\in \mathbf{H}_\infty^p$, $G\in \mathbf{H}_\infty^q$ and $n\in\mathbb{N}$, it follow from the H\"{o}lder inequality that
$$\|F_{(n)}G_{(n)}\|_{H^r(\mathbb{D}^n)}\leq\|F_{(n)}\|_{H^p(\mathbb{D}^n)}\|G_{(n)}\|_{H^q(\mathbb{D}^n)}\leq\|F\|_p\|G\|_q.$$
Taking the supreme  we see that $FG\in \mathbf{H}_\infty^r$ and hence the H\"{o}lder inequality holds for the Hardy spaces over the infinite-dimensional polydisk:
$$\|FG\|_r\leq\|F\|_p\|G\|_q.$$


Combining the H\"{o}lder inequality with the fact that the set of polynomials is dense in $\mathbf{H}_\infty^p\ (0<p<\infty)$, we have
\begin{lem} \label{FG in [F]}
  Suppose  that  $0<p,q\leq \infty$ satisfies $\frac{1}{p}+\frac{1}{q}\leq\frac{1}{2}$. If $F\in \mathbf{H}_\infty^p$ and $G\in \mathbf{H}_\infty^q$, then $FG\in[F]\cap[G]$.
\end{lem}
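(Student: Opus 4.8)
The plan is to prove the two memberships $FG\in[F]$ and $FG\in[G]$ separately, each by producing an explicit sequence that lies inside the relevant invariant subspace and converges to $FG$ in the $\mathbf{H}_\infty^2$-norm. Since the two assertions are symmetric under interchanging the roles of $(F,p)$ and $(G,q)$, it suffices to explain how I would obtain $FG\in[F]$. First I would record that $FG$ really does land in $\mathbf{H}_\infty^2$: writing $\frac1r=\frac1p+\frac1q\le\frac12$, the H\"older inequality established just above gives $FG\in\mathbf{H}_\infty^r$ with $r\ge2$, and since $\rho$ is a probability measure on $\mathbb{T}^\infty$ one has $\|\cdot\|_2\le\|\cdot\|_r$; hence $FG\in\mathbf{H}_\infty^2$ with $\|FG\|_2\le\|F\|_p\|G\|_q$.

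For the case $q<\infty$ I would use that polynomials are dense in $\mathbf{H}_\infty^q$ to choose polynomials $q_k\to G$ in the $\mathbf{H}_\infty^q$-metric. Each product $q_kF$ lies in $[F]$: indeed $[F]$ is a joint invariant subspace, so it contains $M_{\zeta^\alpha}F=\zeta^\alpha F$ for every $\alpha$, and being a subspace it contains the finite linear combination $q_kF$. Applying the same H\"older estimate to $q_k-G$ in place of $G$ gives
$$\|q_kF-FG\|_2=\|F(q_k-G)\|_2\le\|F(q_k-G)\|_r\le\|F\|_p\,\|q_k-G\|_q\longrightarrow0,$$
so $q_kF\to FG$ in $\mathbf{H}_\infty^2$; as $[F]$ is closed, $FG\in[F]$.

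The one point that needs separate treatment — and the main, though mild, obstacle — is the endpoint $q=\infty$, where polynomials are \emph{not} norm-dense in $\mathbf{H}^\infty$, so the approximation above is unavailable. Here, however, $\frac1p\le\frac12$ forces $p\ge2$, whence $F\in\mathbf{H}_\infty^p\subseteq\mathbf{H}_\infty^2$; since $G\in\mathbf{H}^\infty$ is a bounded holomorphic function on $\mathbb{D}_2^\infty$, Lemma \ref{polynomials w* dense in Hinfty} applies directly and yields $GF\in[F]$. Running the symmetric argument for $FG\in[G]$ — approximating $F$ by polynomials when $p<\infty$, and invoking Lemma \ref{polynomials w* dense in Hinfty} with the multiplier $F\in\mathbf{H}^\infty$ when $p=\infty$ — completes the proof. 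I expect no further difficulties: once the H\"older inequality and the embedding $\|\cdot\|_2\le\|\cdot\|_r$ are in hand, and the $\infty$-endpoints are routed through Lemma \ref{polynomials w* dense in Hinfty}, everything reduces to norm convergence of a fixed factor times a polynomial approximation.
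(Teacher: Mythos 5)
Your proof is correct and follows essentially the same route the paper intends: the paper's proof is exactly the one-line combination of the H\"older inequality $\|F(q_k-G)\|_2\le\|F(q_k-G)\|_r\le\|F\|_p\|q_k-G\|_q$ with density of polynomials in $\mathbf{H}_\infty^q$ for finite $q$, plus Lemma \ref{polynomials w* dense in Hinfty} at the $\infty$ endpoint. Your write-up just makes these steps (including the endpoint case, which the paper leaves implicit but does use later) explicit.
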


\noindent\textbf{Proof of Theorem \ref{Hp multiply Hq}.} (1) It suffices to show $FG\in[F]$.
Note  that $FG^{t}=F^{1-t}(FG)^t\ (0<t<1)$, here $F^{1-t}\in \mathbf{H}_\infty^{p/(1-t)}$ and $(FG)^t\in \mathbf{H}_\infty^{2/t}$. Since for each $0<t<1$, $\frac{1-t}{p}+\frac{t}{2}<\frac{1}{2}$, it follows that $FG^{t}$ belongs to $\mathbf{H}_\infty^r$ for  $r=r(t)\in(2,p)$, where $\frac{1}{r(t)}=\frac{1-t}{p}+\frac{t}{2}$, and
\begin{equation*}
  \begin{split}
     \|FG^{t}\|_2 & \leq\|FG^{t}\|_r\leq\|F^{1-t}\|_{\frac{p}{1-t}}\|(FG)^t\|_{\frac{2}{t}} \\
       & =\|F\|_p^{1-t}\|FG\|_2^t\leq(\|F\|_p+1)(\|FG\|_2+1).
  \end{split}
\end{equation*}
Then $FG^{t}\stackrel{\mathrm{w}}{\rightarrow}FG$ in $\mathbf{H}_\infty^2$ as $t$ approachs $1$ from below.
So we only need to prove that for any $0<t<1$, $FG^{t}\in[F]$.

Now we fix some $0<t<1$. As done in  the proof of \cite[Theorem 3.3 (3)]{Ni1},  there exists $2<r<p$ so that $FG^t\in \mathbf{H}_\infty^r$.
By $FG^{ts}=F^{1-s}(FG^t)^s\ (0\leq s\leq 1)$,
 the argument in the previous paragraph shows
  $FG^{ts}\in \mathbf{H}_\infty^r$ for any $0\leq s\leq1$. Take  $N\in\mathbb{N}$ sufficiently large so that $\frac{1}{r}+\frac{t}{Nq}\leq\frac{1}{2}$. From Lemma \ref{FG in [F]} and $G^{t/N}\in \mathbf{H}_\infty^{Nq/t}$,  we have  $FG^{ts+\frac{t}{N}}\in[FG^{ts}]\ (0\leq s\leq1)$,
and thus by induction,
$$FG^t=FG^{t(1-\frac{1}{N})+\frac{t}{N}}\in[FG^{t-\frac{t}{N}}]\subseteq[FG^{t-\frac{2t}{N}}]
\subseteq\cdots\subseteq[FG^{\frac{t}{N}}]\subseteq[F].$$
The proof of this part is complete.

\vskip2mm
(2) The necessity immediately follows from Lemma  \ref{FG in [F]}. For the sufficiency,
 choose a sequence $\{p_k\}_{k\in\mathbb{N}}$ of polynomials so that $\|Gp_k-1\|_2\rightarrow0$ as $k\rightarrow\infty$. Since $F$ is bounded, we also have $\|FGp_k-F\|_2\rightarrow0$ ($k\rightarrow\infty$), forcing $F\in[FG]$. This gives that $FG$ is  cyclic.
$\hfill \square $
\vskip2mm

In what follows, we give some applications of Theorem \ref{Hp multiply Hq} and Corollary \ref{cor of Hp multiply Hq}.

\begin{cor} \label{Hp multiply reproducing kernel}
If $F\in \mathbf{H}_\infty^p\ (p>2)$ and $\lambda\in\mathbb{D}_2^\infty$, then $F\mathbf{K}_\lambda$ is cyclic if and only if $F$ is  cyclic. In particular, the product of any finitely many reproducing kernels of $\mathbf{H}_\infty^2$ is cyclic.
\end{cor}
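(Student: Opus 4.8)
The plan is to deduce both implications from Theorem \ref{Hp multiply Hq} and Lemma \ref{FG in [F]}, after first recording the integrability of $\mathbf{K}_\lambda$ and of its reciprocal. Recall from Section 4 that $\mathbf{K}_\lambda(\zeta)=\prod_{n=1}^\infty \frac{1}{1-\overline{\lambda_n}\zeta_n}$, so that $\mathbf{K}_\lambda^{-1}=\prod_{n=1}^\infty(1-\overline{\lambda_n}\zeta_n)$. The first step is to show that both $\mathbf{K}_\lambda$ and $\mathbf{K}_\lambda^{-1}$ lie in $\mathbf{H}_\infty^q$ for every finite $q>0$. By Proposition \ref{definition of Hp} it suffices to bound $\sup_n\int_{\mathbb{T}^n}|(\mathbf{K}_\lambda)_{(n)}|^q\,\mathrm{d}m_n$; since the factors involve pairwise independent variables, this integral factors as $\prod_{k=1}^n\int_{\mathbb{T}}|1-\overline{\lambda_k}z|^{-q}\,\mathrm{d}m_1(z)$, and the elementary estimate $\int_{\mathbb{T}}|1-az|^{-q}\,\mathrm{d}m_1=1+O(|a|^2)$ as $a\to0$ shows that the infinite product converges, because $\sum_n|\lambda_n|^2<\infty$. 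The same argument (with $|1-az|^{q}$ in place of $|1-az|^{-q}$) handles $\mathbf{K}_\lambda^{-1}$. Hence both functions belong to $\bigcap_{0<q<\infty}\mathbf{H}_\infty^q$.

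For the forward implication, suppose $F\mathbf{K}_\lambda$ is cyclic; in particular $F\mathbf{K}_\lambda\in\mathbf{H}_\infty^2$. Since $F\in\mathbf{H}_\infty^p$ with $p>2$ and $\mathbf{K}_\lambda\in\mathbf{H}_\infty^q$ for some (indeed every) $q>0$, Theorem \ref{Hp multiply Hq}(1) applied with $G=\mathbf{K}_\lambda$ yields at once that $F$ is cyclic. For the reverse implication, suppose $F$ is cyclic. I would choose $q$ so large that the Hölder exponent $r$ defined by $\frac1r=\frac1p+\frac1q$ still satisfies $r>2$; then $F\mathbf{K}_\lambda\in\mathbf{H}_\infty^r$ by the Hölder inequality established before Lemma \ref{FG in [F]}. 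Next choose $q'$ so large that $\frac1r+\frac1{q'}\le\frac12$, and recall $\mathbf{K}_\lambda^{-1}\in\mathbf{H}_\infty^{q'}$ by Step~1. Since $F=(F\mathbf{K}_\lambda)\,\mathbf{K}_\lambda^{-1}$, Lemma \ref{FG in [F]} gives $F\in[F\mathbf{K}_\lambda]$. Consequently $[F\mathbf{K}_\lambda]\supseteq[F]=\mathbf{H}_\infty^2$, so $F\mathbf{K}_\lambda$ is cyclic, completing the equivalence.

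The ``in particular'' clause then follows by induction on the number $m$ of factors. Each reproducing kernel is cyclic (as noted in Section 4) and lies in every $\mathbf{H}_\infty^q$, and a finite product of such kernels again lies in every $\mathbf{H}_\infty^q$, hence in some $\mathbf{H}_\infty^p$ with $p>2$. Thus, writing $F=\prod_{j<m}\mathbf{K}_{\lambda^{(j)}}$, which is cyclic by the inductive hypothesis and lies in $\mathbf{H}_\infty^p$ for some $p>2$, the main equivalence shows that $F\,\mathbf{K}_{\lambda^{(m)}}=\prod_{j\le m}\mathbf{K}_{\lambda^{(j)}}$ is cyclic.

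The only genuinely technical point I expect is the convergence estimate of Step~1, i.e.\ verifying that $\mathbf{K}_\lambda$ and $\mathbf{K}_\lambda^{-1}$ sit in all $\mathbf{H}_\infty^q$; once that infinite-product bound is in hand, the rest is a direct bookkeeping of Hölder exponents feeding into Theorem \ref{Hp multiply Hq} and Lemma \ref{FG in [F]}.
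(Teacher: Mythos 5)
Your proof is correct and follows essentially the same route as the paper: establish that $\mathbf{K}_\lambda$ and $\mathbf{K}_\lambda^{-1}$ have the requisite $\mathbf{H}_\infty^q$-integrability and then feed the factorizations $F\cdot\mathbf{K}_\lambda$ and $(F\mathbf{K}_\lambda)\cdot\mathbf{K}_\lambda^{-1}=F$ into Theorem~\ref{Hp multiply Hq} and Lemma~\ref{FG in [F]}. The only differences are cosmetic: the paper simply cites Nikolski for $\mathbf{K}_\lambda\in\bigcap_{2\leq q<\infty}\mathbf{H}_\infty^q$ and $\mathbf{K}_\lambda^{-1}\in\mathbf{H}_\infty^2$ (your infinite-product estimate is a valid self-contained substitute), and it runs the reverse implication through Theorem~\ref{Hp multiply Hq}(1) rather than Lemma~\ref{FG in [F]}.
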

\begin{proof}  The proof is due to the fact that $\mathbf{K}_\lambda\in\bigcap_{2\leq q<\infty}\mathbf{H}_\infty^q$ and $\mathbf{K}_\lambda^{-1}\in \mathbf{H}_\infty^2$, which was proved by Nikolski (\cite[pp. 1621]{Ni1}, also see \cite{CG}). This gives
$$F\mathbf{K}_\lambda\in\bigcap_{2\leq r<p}\mathbf{H}_\infty^r.$$ Applying Theorem \ref{Hp multiply Hq} (1) we obtain the desired conclusion.
\end{proof}


\begin{exam}
  In \cite{HLS}, the space $\mathcal{H}_d^2$ of Dirichlet series $\sum_{n=1}^{\infty}a_nn^{-s}$ with $\sum_{n=1}^{\infty}|a_n|^2d(n)<\infty$ was introduced, where $d(n)$ denotes the number of divisors of a positive integer $n$.
  The image of $\mathcal{H}_d^2$ under the Bohr transform $\mathbf{B}$, denoted by $\mathcal{D}_\infty$, is exactly the  tensor product of countably infinitely many Dirichlet spaces
  $$
\mathcal{D}=\{f=\sum_{n=0}^\infty a_nz^n\ \mathrm{is\ holomorphic\ on}\ \mathbb{D}:\|f\|^2=\sum_{n=0}^\infty|a_n|^2(n+1)<\infty\}
$$
   with stabilizing sequence $\{1\}_{n\in\mathbb{N}}$ (see \cite{Be} for a definition of the  tensor product of countably infinitely many Hilbert  spaces).
  For any polynomial $p=\sum_{n=1}^{\infty}a_n\zeta^{\alpha(n)}$ ($a_n=0$ except for finitely many $n$),
  $$\|p\|_4^4=\|p^ 2\|_2^2=\sum_{n=1}^{\infty}\left|\sum_{kl=n}a_ka_l\right|^2\leq\sum_{n=1}^{\infty}\sum_{kl=n}|a_ka_l|^2d(n).$$
  Since $d(n)\leq d(k)d(l)$ for $n=kl$, we further have
  $$\|p\|_4^4\leq\sum_{n=1}^{\infty}\sum_{kl=n}(|a_k|^2d(k))(|a_l|^2d(l))=\left(\sum_{n=1}^{\infty}|a_n|^2d(n)\right)^2.$$
  Since the set of polynomials is dense in $\mathcal{D}_\infty$, one has $\mathcal{D}_\infty\subseteq \mathbf{H}_\infty^4$. By comparing the coefficients one can easily check that if $F,G\in\mathbf{H}_\infty^4$, then $FG\in\mathbf{H}_\infty^2$ and $\mathbf{B}^{-1}(F)\mathbf{B}^{-1}(G)=\mathbf{B}^{-1}(FG)$.
  This implies  that if $f,g\in\mathcal{H}_d^2$, then $fg\in\mathcal{H}^2$.
    Therefore, Corollary  \ref{cor of Hp multiply Hq}
  generalizes a result proved in \cite{HLS}: if $f,\frac{1}{f}\in\mathcal{H}_d^2$, then $f$ is a cyclic vector in $\mathcal{H}^2$ for the multiplier algebra.
\end{exam}

Finally, we apply Theorem \ref{Hp multiply Hq} to give a stronger version of a central result in \cite{No}.
    Let $S$ denote the Hardy shift $M_z$ on the Hardy space $H^2(\mathbb{D})$. Noor established some equivalent conditions for the Riemann hypothesis by constructing a semigroup of weighted composition operators $$W_nf(z)=\frac{1-z^n}{1-z}f(z^n),\quad n\in\mathbb{N}$$
on the Hardy space $H^2(\mathbb{D})$ which satisfies
\begin{equation}\label{identity in No}
 T_n(I-S)=(I-S)W_n,\quad n\in\mathbb{N},
\end{equation}
where $T_nf=f(z^n),\ f\in H^2(\mathbb{D})$ \cite{No}. Therefore, if $f$ is a cyclic vector for $\{W_n\}_{n\in\mathbb{N}}$, then $(I-S)f$ is necessarily cyclic for $\{T_n\}_{n\in\mathbb{N}}$. In the opposite direction, it is an extremely interesting problem. This will be  illustrated by the following  statement.

Note that  $H_0^2=H^2(\mathbb{D})\ominus\mathbb{C}$ is jointly reducing for $\{T_n\}_{n\in\mathbb{N}}$, and  it follows that
  if $f\in H^2(\mathbb{D})$ and $f(0)\neq0$, then $\{f(z^n)\}_{n\in\mathbb{N}}$ is complete in $H^2(\mathbb{D})$ if and only if $\{f(z^n)-f(0)\}_{n\in\mathbb{N}}$ is complete in $H_0^2$.
Recall that the Bohr transform $\mathcal{B}$ on  $H_0^2$ satisfies
$$
\mathcal{B}T_nf=M_{\zeta^{\alpha(n)}}\,\mathcal{B}f
,\quad f\in H_0^2,\,n\in\mathbb{N}.
$$
This means that if $f(0)\not=0$,  $\{f(z^n)\}_{n\in\mathbb{N}}$ is complete in $H^2(\mathbb{D})$ if and only if  $\mathcal{B}(f-f(0))$ is  cyclic in $\mathbf{H}_\infty^2$.
In particular, we have
\begin{prop}\label{varphi(zn)}
 For each fixed  $m\geq2$, set $\varphi_m=\log(1-z^m)-\log(1-z)-\log m$.
  Then $\{\varphi_m(z^n)\}_{n\in\mathbb{N}}$ is complete in $H^2(\mathbb{D})$, that is, $\varphi_m$ is a  cyclic vector for   $\{T_n\}_{n\in\mathbb{N}}$.
\end{prop}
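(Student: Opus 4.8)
The plan is to invoke the criterion established just before the proposition. Since $\varphi_m(0)=\log(1-0)-\log(1-0)-\log m=-\log m\neq 0$, that criterion says $\{\varphi_m(z^n)\}_{n\in\mathbb{N}}$ is complete in $H^2(\mathbb{D})$ if and only if $\mathcal{B}\big(\varphi_m-\varphi_m(0)\big)$ is cyclic in $\mathbf{H}_\infty^2$. Setting $g_m:=\varphi_m+\log m=\log(1-z^m)-\log(1-z)$, I note that $g_m\in H_0^2$, because it lies in $H^2(\mathbb{D})$ (both summands have square-summable Taylor coefficients) and vanishes at the origin. Thus the whole task reduces to proving that $\mathcal{B}g_m$ is a cyclic vector in $\mathbf{H}_\infty^2$.

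The main step is to compute $\mathcal{B}g_m$ explicitly and to read off a convenient factorization. First, $-\log(1-z)=\sum_{n\geq1}z^n/n$ has coefficients $1/n$, which are totally multiplicative; hence, by the discussion in Section 4 identifying functions with totally multiplicative coefficients as reproducing kernels, $\mathcal{B}(-\log(1-z))=\sum_{n\geq1}\tfrac1n\zeta^{\alpha(n)}=\mathbf{K}_\lambda$ is the reproducing kernel at $\lambda=(p_1^{-1},p_2^{-1},\cdots)$, where $\lambda\in\mathbb{D}_2^\infty$ because $\sum_k p_k^{-2}<\infty$. Next, since $-\log(1-z^m)=T_m\big(-\log(1-z)\big)$, the intertwining relation $\mathcal{B}T_m=M_{\zeta^{\alpha(m)}}\mathcal{B}$ gives $\mathcal{B}(-\log(1-z^m))=\zeta^{\alpha(m)}\mathbf{K}_\lambda$. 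By linearity of $\mathcal{B}$ I then obtain
$$\mathcal{B}g_m=\mathbf{K}_\lambda-\zeta^{\alpha(m)}\mathbf{K}_\lambda=(1-\zeta^{\alpha(m)})\,\mathbf{K}_\lambda.$$

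It remains to show that this product is cyclic. The factor $1-\zeta^{\alpha(m)}$ is a polynomial, hence lies in $\mathbf{H}_\infty^p$ for every $p$, and since $m\geq2$ forces $\alpha(m)\neq0$, one has $|\zeta^{\alpha(m)}|<1$ throughout $\mathbb{D}_2^\infty$; thus $1-\zeta^{\alpha(m)}$ has no zero in $\mathbb{D}_2^\infty$ and is therefore cyclic by Theorem \ref{t5} (a polynomial belongs to $\mathbf{A}_{R,\infty}$ for every $R>1$). Applying Corollary \ref{Hp multiply reproducing kernel} with $F=1-\zeta^{\alpha(m)}$ and this $\lambda$ shows that $F\mathbf{K}_\lambda=\mathcal{B}g_m$ is cyclic, which completes the argument.

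I expect the only genuine content to be the factorization $\mathcal{B}g_m=(1-\zeta^{\alpha(m)})\mathbf{K}_\lambda$: once $-\log(1-z)$ is recognized as a reproducing kernel under $\mathcal{B}$ and the dilation $z\mapsto z^m$ is translated through the Bohr intertwining relation into multiplication by $\zeta^{\alpha(m)}$, the rest is a direct appeal to the polynomial case of Theorem \ref{t5} together with Corollary \ref{Hp multiply reproducing kernel}. The points deserving a line of care are the membership $\lambda\in\mathbb{D}_2^\infty$ and the verification $g_m\in H_0^2$, both of which are routine.
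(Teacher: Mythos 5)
Your proof is correct and follows essentially the same route as the paper: the same reduction via the remark preceding the proposition, the same factorization $\mathcal{B}(\log(1-z^m)-\log(1-z))=(1-\zeta^{\alpha(m)})\mathbf{K}_{\lambda}$ obtained from the Bohr intertwining relation, and the same Section 6 machinery to conclude. The only cosmetic difference is that you close with Corollary \ref{Hp multiply reproducing kernel} (multiplication by a reproducing kernel preserves cyclicity), while the paper cites Theorem \ref{Hp multiply Hq}(2) directly with the bounded cyclic factor $\zeta^{\alpha(m)}-1$; these are interchangeable here.
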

\begin{proof}
  Put $\varphi_0=\log(1-z^m)-\log(1-z)$.
   Then by the above comment, it suffices to show that $\mathcal{B}\varphi_0$ is cyclic in  $\mathbf{H}_\infty^2$.  First note that
  $$\psi(\zeta)=\mathcal{B}(\log(1-z))(\zeta)=-\sum_{n=1}^{\infty}\frac{\zeta^{\alpha(n)}}{n}$$ is cyclic in $\mathbf{H}_\infty^2$.
      Since
   $$\mathcal{B}\varphi_0(\zeta)=\big(\mathcal{B}T_m(\log(1-z))-\mathcal{B}(\log(1-z))\big)(\zeta)=(\zeta^{\alpha(m)}-1)\psi(\zeta), $$
 Theorem 6.1(2) shows that $\mathcal{B}\varphi_0$ is cyclic.
\end{proof}

For each fixed $m\geq 2$, as done in \cite{No}, set $$h_m=\frac{1}{1-z}(\log(1-z^m)-\log(1-z)-\log m).$$
By the equalities
$$T_n\varphi_m=T_n(I-S)h_m=(I-S)W_nh_m,$$
and  Proposition \ref{varphi(zn)}, we deduce that
$(I-S)\mathrm{span}\{W_nh_m:\,n\geq1\}$ is dense in $H^2(\mathbb{D})$. This is a stronger version than  \cite[Theorem 9]{No}. However, it is extremely difficult to determine whether
$\mathrm{span}\{W_nh_m\}_{n\in\mathbb{N}}$ is dense in $H^2(\mathbb{D})$, or not, because if it was, then by \cite[Theorem 8]{No} the Riemann hypothesis would be true! In particular, here we take $m=2$, and therefore $h_2=\frac{1}{1-z}\log\frac{1+z}{2}$. If $\mathrm{span}\{W_nh_2\}_{n\in\mathbb{N}}$ is dense in $H^2(\mathbb{D})$, then the Riemann hypothesis holds, although we have known that $(I-S)\mathrm{span}\{W_nh_2\}_{n\in\mathbb{N}}$ is dense in $H^2(\mathbb{D})$.

\section{The cyclicity of functions with images outside a curve}

In this section we will use  a Riemann surface approach to reveal  the relationship between cyclicity of a function $F\in \mathbf{H}_\infty^2$ and the geometry of its image.  It is shown that if  one can find a simple curve $\gamma$ starting from the origin and tending to the infinity with bounded variation of argument, so that $\gamma$ does not intersect with the image  of    $F$, then $F$ is cyclic.

Let $X$ be a topological space. Recall that a curve in $X$ is  a continuous map $\gamma:I\rightarrow X$ on some interval $I\subseteq\mathbb{R}$.
By a simple curve $\gamma$, we mean that $\gamma$ has no self-intersection point. It is known that for every curve $\gamma:I\rightarrow\mathbb{C}^*=\mathbb{C}\setminus\{0\}$, there exists a continuous real function $\theta$ on $I$ such that $\gamma(t)=|\gamma(t)|e^{i\theta(t)}$ for any $t\in I$. Such function $\theta$ is called the argument function of $\gamma$. Our main result in this section reads as follows.

\begin{thm}\label{image outside a curve} Suppose that $F\in \mathbf{H}_\infty^2$ has no zeros in $\mathbb{D}_2^\infty$. If there exists a simple curve $\gamma:[0,1)\rightarrow\mathbb{C}$ from the origin  reaching out to infinity(that is,   $\gamma(0)=0, \lim_{t\rightarrow1^-}\gamma(t)=\infty,$) and $\gamma$ has a bounded argument function on $(0,1)$, such that the image of the function $F$ does not intersect with the  curve $\gamma$, then $F$ is cyclic.
\end{thm}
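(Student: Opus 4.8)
The goal is to prove that $1\in[F]$; the plan is to approximate the constant function $1$ weakly by elements of $[F]$ of the form $\phi F$ with $\phi\in\mathbf{H}^\infty$, using a functional calculus built from a branch of the logarithm of $F$. First I would record the geometric input. The set $\Omega=\mathbb{C}\setminus\gamma([0,1))$ is the complement of a simple arc joining $0$ to $\infty$ on the Riemann sphere, hence connected and simply connected, and it omits $0$. Therefore $\Omega$ carries a single-valued holomorphic branch of the logarithm, and $L:=\log_\Omega\circ F$ is a well-defined holomorphic function on $\mathbb{D}_2^\infty$ with $F=e^{L}$. The decisive point — and the only place where the hypothesis on $\gamma$ enters — is that the bounded-argument assumption forces $\mathrm{Im}\,L=\arg F$ to be bounded: lifting $\Omega$ through the covering $\exp$ shows that $\log_\Omega(\Omega)$ lies in a horizontal strip of finite width, since $\exp^{-1}(\gamma)$ consists of $2\pi i$-translates of one curve whose imaginary part ranges over the (bounded) argument range of $\gamma$. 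After multiplying $F$ by a suitable unimodular constant (which changes neither cyclicity nor membership in $\mathbf{H}_\infty^2$) I may assume $|\mathrm{Im}\,L|<W/2$ for some finite $W$.

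The main device is a fractional power that compresses this strip into the right half-plane. Fix $\sigma\in(0,\pi/W)$ and set $F^{-\sigma}:=\exp(-\sigma L)$, so that $\arg F^{-\sigma}=-\sigma\,\mathrm{Im}\,L\in(-\pi/2,\pi/2)$ and hence $\mathrm{Re}\,F^{-\sigma}\ge\cos(\sigma W/2)\,|F|^{-\sigma}>0$. For $c>0$ I then form $g_c:=\exp(-c\,F^{-\sigma})$. The crucial observation is that $m_c:=g_c/F$ is a bounded holomorphic function on $\mathbb{D}_2^\infty$: from $\mathrm{Re}\,F^{-\sigma}\ge\cos(\sigma W/2)|F|^{-\sigma}$ one gets $|m_c|\le|F|^{-1}\exp\!\big(-c\cos(\sigma W/2)\,|F|^{-\sigma}\big)$, and the right-hand side, viewed as a function of $u:=|F|^{-1}\in(0,\infty)$, has the form $u\,e^{-\kappa u^{\sigma}}$ with $\kappa>0$, which is bounded. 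Thus $m_c\in\mathbf{H}^\infty$, and Lemma \ref{polynomials w* dense in Hinfty} yields $g_c=m_cF\in[F]$.

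Finally I would let $c\to0^{+}$. Because $\mathrm{Re}\,F^{-\sigma}\ge0$ we have $|g_c|\le1$ on $\mathbb{D}_2^\infty$, so $\{g_c\}$ is uniformly bounded in $\mathbf{H}_\infty^2$; and for each fixed $\zeta$ we have $g_c(\zeta)\to1$. Testing against the reproducing kernels $\mathbf{K}_\zeta$, whose linear span is dense, a uniformly bounded net converging pointwise converges weakly, so $g_c\to1$ weakly in $\mathbf{H}_\infty^2$. Since $[F]$ is a closed, hence weakly closed, subspace containing every $g_c$, it contains $1$, and therefore $F$ is cyclic.

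The step I expect to be the main obstacle is the geometric one in the first paragraph: converting the statement that the omitted curve has bounded argument into the quantitative fact that $\arg F$ is confined to a strip of finite width $W$. Everything downstream is organized around this single number: note in particular that the exponent $\sigma<\pi/W$ compresses an arbitrarily wide (but finite) strip into the right half-plane, so no separate treatment of wide images, nor of curves that wind several times around the origin, is required, and the analytic estimate for $m_c$ together with the weak-convergence argument is then routine. (One could alternatively route the argument through Section 6, first reducing to a function with positive real part via a small power $F^{t}$ and invoking \emph{Theorem \ref{Hp multiply Hq}}, but the construction $g_c=\exp(-c\,F^{-\sigma})$ above dispenses with that and proves the statement directly.)
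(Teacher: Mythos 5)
Your overall strategy coincides with the paper's: both proofs reduce the theorem to the criterion that a zero-free $F\in\mathbf{H}_\infty^2$ with bounded argument function is cyclic (Theorem \ref{Imlog is bounded}), and both obtain the boundedness of $\arg F$ by lifting through the exponential covering and exploiting that the lifts of $\gamma$ are $2\pi i$-translates of a single curve confined to a horizontal strip of width $2M$, where $M$ bounds the argument function of $\gamma$. Your endgame differs only cosmetically: where the paper passes to a root $F^{1/N}$ with positive real part and uses $F/(F^{1/N}+\varepsilon)^N\in[F]$, you use $g_c=\exp(-c\,F^{-\sigma})$ with $g_c/F$ bounded; both regularizations are correct, and your estimate $|m_c|\le u\,e^{-\kappa u^{\sigma}}$ with $u=|F|^{-1}$ and the weak-convergence argument are sound. (A small simplification: the paper defines $\log F$ by lifting $F$ directly from the simply connected domain $\mathbb{D}_2^\infty$, which spares you from having to know that the complement of an arc on the sphere is simply connected.)

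The one place where your write-up falls short of a proof is exactly the step you flag as the main obstacle. From the fact that each lifted curve $\tilde{\gamma}_k$ lies in the strip $\{w:|\mathrm{Im}\,w-2k\pi|\le M\}$ it does not yet follow that a connected component of $\exp^{-1}(\Omega)$ lies in a horizontal strip; one also needs that each $\tilde{\gamma}_k$, completed through $\infty$ to a Jordan curve on the sphere, genuinely \emph{separates} the half-plane above its strip from the half-plane below it, so that a connected set avoiding $\tilde{\gamma}_{N}$ and $\tilde{\gamma}_{-N}$ and containing a basepoint between them is trapped between them. This separation is a real topological input: it uses that $\mathrm{Re}\,\tilde{\gamma}_k(t)=\log|\gamma(t)|$ tends to $-\infty$ at one end and $+\infty$ at the other, together with the Jordan curve theorem (e.g., a vertical segment spanning the strip has odd crossing number with $\tilde{\gamma}_k$, so the two half-planes lie in different complementary components). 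It is precisely what the paper supplies with its theta-curve configuration: the two shifted lifts $\tilde{\gamma}_{\pm}$ divide $\mathbb{P}^1$ into three components and the connected image of $\widetilde{F}$ is confined to the middle one. Your ``since'' clause records the correct data but omits this argument; once it is inserted, the rest of your proof goes through.
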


The following corollary immediately follows.
\begin{cor} Suppose that $F\in \mathbf{H}_\infty^2$ has no zeros in $\mathbb{D}_2^\infty$. The function $F$ is cyclic  if $F$ satisfies one of the following conditions:
\begin{itemize}
  \item [(1)] the image  $F(\mathbb{D}_2^\infty)$ of $F$ does not intersect with a (straight) half-line starting from  the origin;
  \item [(2)]  the image  of $F$  is the interior of  some $C^1$-smooth Jordan curve;
  \item [(3)] the image  of $F$  is a bounded  simply connected domain, and there exists an open disk $D$ whose  boundary  goes through  the origin $z=0$, such that $F(\mathbb{D}_2^\infty)\cap D=\emptyset$.
\end{itemize}
\end{cor}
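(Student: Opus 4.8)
The plan is to reduce all three cases to a single task: under the standing hypothesis that $F$ has no zeros (so that $0\notin\Omega$, where $\Omega:=F(\mathbb{D}_2^\infty)$), produce a simple curve $\gamma\colon[0,1)\to\mathbb{C}$ with $\gamma(0)=0$, $\lim_{t\to1^-}\gamma(t)=\infty$, bounded argument on $(0,1)$, and $\gamma([0,1))\cap\Omega=\emptyset$; then Theorem \ref{image outside a curve} applies verbatim and yields cyclicity. Case (1) is immediate: if $\Omega$ omits the half-line $\ell=\{re^{i\theta_0}:r\ge0\}$, then $\ell$ itself, parametrized by $\gamma(t)=\frac{t}{1-t}e^{i\theta_0}$, is such a curve, with constant (hence bounded) argument $\theta_0$.

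For (2) and (3) I would exploit that $\Omega$ is bounded: fixing $R$ with $\Omega\subseteq R\,\mathbb{D}$, the tail of $\gamma$ can always be taken to be a radial ray $\{te^{i\phi}:t\ge R\}$, which has constant argument $\phi$ and automatically misses $\Omega$. Thus it suffices to join the origin to a single point $Re^{i\phi}$ by an arc lying in $\mathbb{C}\setminus\overline{\Omega}$ with controlled argument. In case (3) the omitted disk $D$ provides a clean departure from the origin: the half-open segment from $0$ to the center $c$ of $D$ lies in $D\cup\{0\}\subseteq\mathbb{C}\setminus\Omega$ and has constant argument $\arg c$, after which one connects $c$ to the radial tail inside the connected open set $\mathbb{C}\setminus\overline{\Omega}$. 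Case (2) reduces to the same picture: if $0\notin\overline{\Omega}$ one finds a small disk $D$ with $0\in\partial D$ and $\overline{D}\cap\overline{\Omega}=\emptyset$ and invokes (3); if instead $0$ lies on the bounding Jordan curve $J$, the $C^1$-smoothness furnishes an outward unit normal $\nu$ at $0$, and the short segment $\{t\nu:0<t\le\delta\}$ lies in the exterior of $J$ for small $\delta$, again giving a constant-argument start into $\mathbb{C}\setminus\overline{\Omega}$.

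The hard part, and the only place the hypotheses do real work, is guaranteeing that the connecting arc from the localized start to the radial tail can be chosen with bounded argument, that is, that it need not wind around the origin infinitely often. Since $0\notin\Omega$ and $\Omega$ is simply connected, a single-valued continuous branch of $\arg$ is available on the relevant complementary region, so the obstruction is purely the total angular variation of $\Omega$ as seen from the origin. This is exactly where the regularity enters: in (2) the boundary $J$ is a compact Jordan curve, so $\overline{\Omega}$ subtends a finite angular range at $0$ and the winding is finite; in (3) the omitted disk $D$ through the origin pins the behaviour near $0$ and, together with boundedness, confines the argument to a bounded interval. Once finiteness of the angular winding is secured, one threads the arc through $\mathbb{C}\setminus\overline{\Omega}$ within this bounded range of directions and then escapes radially, producing $\gamma$ with bounded argument and completing the verification of the hypotheses of Theorem \ref{image outside a curve}.
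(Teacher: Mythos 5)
Your reduction of all three cases to the construction of a simple curve from $0$ to $\infty$ with bounded argument avoiding $\Omega:=F(\mathbb{D}_2^\infty)$, followed by an appeal to Theorem \ref{image outside a curve}, is exactly what the paper intends (the paper gives no further detail, asserting only that the corollary ``immediately follows''). Case (1) is correct as you state it. Case (2) also goes through, but not quite for the reason you give: the claim that ``$\overline{\Omega}$ subtends a finite angular range at $0$'' is neither true in general (a horseshoe-shaped Jordan domain can surround the origin) nor needed. What one actually uses is that the exterior of $J$ is open and connected (Jordan curve theorem), hence polygonally connected; that $0$ is accessible from it (trivially if $0$ lies in the exterior, along the outward normal if $0\in J$, using $C^1$-smoothness); and that a polygonal arc avoiding the origin automatically has finite total variation of argument, each segment contributing less than $\pi$. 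With that repair, (2) is fine.

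Case (3) contains a genuine gap. You propose to connect the center $c$ of $D$ to the radial tail ``inside the connected open set $\mathbb{C}\setminus\overline{\Omega}$,'' but for a bounded simply connected domain $\Omega$ this set need not be connected and $c$ may lie in a bounded component; worse, even allowing the arc to pass through $\partial\Omega$, a curve from $0$ to $\infty$ avoiding $\Omega$ need not exist at all. Take $\Omega=\{(1+se^{-\theta})e^{i\theta}:\theta>0,\ \tfrac12<s<1\}$, an open spiral strip accumulating on the whole unit circle from outside. It is bounded, simply connected, contains no zero of anything at the origin, and omits the disk $D=\{|z-\tfrac14|<\tfrac14\}$ whose boundary passes through $0$, so it satisfies every hypothesis of (3). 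Yet along any continuous path in $\{1<|z|<2\}\setminus\Omega$ the quantity $\arg z-\log\frac{1}{|z|-1}$ (for a continuous branch of the argument) must remain in a single interval of the form $[2\pi k,\,2\pi(k+1)-\log 2]$, so the argument tends to $+\infty$ as $|z|\to 1^{+}$; this contradicts continuity of the argument at the last point of modulus $1$, and hence no path can cross from the closed unit disk to $\{|z|\geq 2\}$ while avoiding $\Omega$. Thus no admissible $\gamma$ exists for this $\Omega$, and your assertion that the omitted disk ``confines the argument to a bounded interval'' is false: $D$ constrains $\Omega$ near $0$ but says nothing about how often $\Omega$ winds around the origin at larger radii. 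To salvage (3) one needs either an extra hypothesis on $\partial\Omega$ or an argument that such spiral images cannot arise from zero-free functions in $\mathbf{H}_\infty^2$; neither is supplied by your proposal (nor, it must be said, by the paper, which states (3) without proof).
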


In what follows that  we introduce the notion of quasi-homogeneous functions on  $\mathbb{D}_2^\infty$. For a directed weight sequence of nonnegative integers with at least a nonzero weight
$$\mathbf{K}=(K_1,K_2,\cdots),$$
we say that a function $F$ on $\mathbb{D}_2^\infty$ is $\mathbf{K}$-quasi-homogeneous, and is  of $\mathbf{K}$-degree
$m$ if for any $\xi$ with $|\xi|=1$, $F(\xi^{K_1}\zeta_1, \xi^{K_2}\zeta_2,\cdots)=\xi^{m}F(\zeta_1, \zeta_2,\cdots) $ for any $(\zeta_1, \zeta_2,\cdots)\in \mathbb{D}_2^\infty$.  If $F$ is a $\mathbf{K}$-quasi-homogeneous holomorphic function on $\mathbb{D}_2^\infty$, and is  of nonzero  $\mathbf{K}$-degree, then it is easy to verify that  the image  of $F$ is an open disk whose   center   is the origin $z=0$, and   radius equals $\sup_{\zeta\in\mathbb{D}_2^\infty}  |F(\zeta)|.$ Indeed, if $\alpha\in F(\mathbb{D}_2^\infty)$, then quasi-homogeneousness implies that $F(\mathbb{D}_2^\infty)$ contains the circle$\{z:\,|z|=|\alpha|\}.$ Let $\omega$ be such that $F(\omega)=\alpha$, then for $0\leq t\leq 1$, $F(t\omega)$ is a continuous curve from the starting point  $0$ to the end point  $\alpha$. Combining the above two facts and openness of the image  of $F$ deduces the desired conclusion.

For quasi-homogeneous functions,  the following result comes from the above conclusion and Corollary 7.2.
\begin{cor} Let $F\in \mathbf{H}_\infty^2$ be a nonzero  quasi-homogeneous  function with  nonzero  degree, $a\in \mathbb{C}$, then  $a+F$ is cyclic if and only if $a+F$ has  not zero point on $\mathbb{D}_2^\infty$ if and only if  $F\in \mathbf{H}^\infty$ and $\|F\|_\infty\leq |a|.$
\end{cor}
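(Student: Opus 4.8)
The plan is to reduce everything to the geometric description of the image already obtained just above: since $F$ is $\mathbf{K}$-quasi-homogeneous of nonzero degree and holomorphic, $F(\mathbb{D}_2^\infty)$ is the open disk $\{z\in\mathbb{C}:|z|<\rho\}$ centered at the origin, where $\rho=\sup_{\zeta\in\mathbb{D}_2^\infty}|F(\zeta)|\in(0,\infty]$, and $\rho=\infty$ precisely when $F\notin\mathbf{H}^\infty$ (in which case the image is all of $\mathbb{C}$). Hence the image of $a+F$ is the open disk $D_a=\{w:|w-a|<\rho\}$ centered at $a$. Writing (1) for ``$a+F$ is cyclic'', (2) for ``$a+F$ has no zero in $\mathbb{D}_2^\infty$'', and (3) for ``$F\in\mathbf{H}^\infty$ and $\|F\|_\infty\leq|a|$'', I will prove the cyclic chain $(1)\Rightarrow(2)\Rightarrow(3)\Rightarrow(1)$. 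First I would dispose of the trivial case $a=0$: then $a+F=F$ vanishes at the origin (as $F(0)=0$ by quasi-homogeneity of nonzero degree), so (1) and (2) both fail, while $\|F\|_\infty\leq 0$ would force $F=0$, so (3) fails as well; thus all three conditions are false and the equivalences hold vacuously. So from now on assume $a\neq0$.

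The implication $(1)\Rightarrow(2)$ is immediate, since every cyclic vector in $\mathbf{H}_\infty^2$ is zero-free in $\mathbb{D}_2^\infty$ \cite{Ni1}. For $(2)\Leftrightarrow(3)$, note that $a+F$ has a zero exactly when $-a\in F(\mathbb{D}_2^\infty)$, i.e. when $0\in D_a$, which happens iff $|a|<\rho$; so (2) is equivalent to $|a|\geq\rho$. If $F\notin\mathbf{H}^\infty$ then $\rho=\infty$ and $D_a=\mathbb{C}\ni 0$, so $a+F$ has a zero, contradicting (2); hence (2) forces $F\in\mathbf{H}^\infty$, giving $\rho=\|F\|_\infty<\infty$ and $|a|\geq\rho=\|F\|_\infty$, which is (3). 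Conversely, under (3) we have $\rho=\|F\|_\infty\leq|a|$, so $|0-a|=|a|\geq\rho$, and since $D_a$ is open this gives $0\notin D_a$, i.e. (2) holds.

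It remains to prove $(3)\Rightarrow(1)$. Under (3) (equivalently (2)) the function $a+F$ is zero-free and its image is the open disk $D_a$ of radius $\rho\leq|a|$ centered at $a$; I will produce a half-line from the origin disjoint from $D_a$ and then invoke Corollary 7.2(1). Take the ray $L=\{-t\,a/|a|:t\geq0\}$ pointing opposite to $a$. For each $t\geq0$ the point $-t\,a/|a|$ lies at distance $t+|a|\geq|a|\geq\rho$ from the center $a$, with equality only when $t=0$. Thus $L$ meets the closed disk $\overline{D_a}$ at most at the origin, which sits on $\partial D_a$, so $L\cap D_a=\emptyset$; that is, the image of $a+F$ does not intersect the half-line $L$. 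Since $a+F$ has no zeros, Corollary 7.2(1) applies and yields that $a+F$ is cyclic, closing the chain.

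The only genuinely delicate points are the two degenerate situations, and both are settled by the estimates above: the tangential case $\rho=|a|$, where the origin lies on $\partial D_a$ and one must use the openness of $D_a$ both to keep $0$ out of the image and to keep the chosen ray disjoint from $D_a$; and the unbounded case $\rho=\infty$, where recognizing $D_a=\mathbb{C}$ is what rules out (2). Beyond these, the whole argument rests on the previously established fact that the image of a nonzero quasi-homogeneous function of nonzero degree is a disk centered at the origin, after which the statement is elementary plane geometry combined with Corollary 7.2.
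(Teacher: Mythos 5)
Your proof is correct and follows exactly the route the paper intends: it uses the fact, established immediately before the corollary, that the image of a nonzero quasi-homogeneous function of nonzero degree is the open disk of radius $\rho=\sup_{\zeta\in\mathbb{D}_2^\infty}|F(\zeta)|$ centered at the origin, reduces both the zero-free condition and the condition $\|F\|_\infty\leq|a|$ to the inequality $|a|\geq\rho$, and then applies Corollary 7.2(1) with the half-line opposite to $a$. The paper merely asserts that the result ``comes from the above conclusion and Corollary 7.2,'' so your write-up (including the tangential case $\rho=|a|$ and the degenerate case $a=0$) is just a fuller version of the same argument.
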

When $F$ is $\mathbf{1}=(1,1, \cdots)$-homogeneous function, then $F$ has the form $F(\zeta)=\sum_{n=1}^\infty a_n\zeta_n$. Corollary 7.3 implies that $a+F$ is cyclic  if and only if $\sum_{n=1}^\infty |a_n|\leq |a|$ because in this case $\sup_{\zeta\in\mathbb{D}_2^\infty}  |F(\zeta)|=\sum_{n=1}^\infty |a_n|.$ This is a previous known result.

\vskip2mm
We  will prove Theorem \ref{image outside a curve} by virtue of Theorem \ref{Imlog is bounded} below.

Let $F$ be a holomorphic function on $\mathbb{D}_2^\infty$ without zeros. Since $\mathbb{D}_2^\infty$ is simply connected,
one can define the logarithm $\log F$ of $F$ by lifting $F$ with respect to the covering  $\exp:\mathbb{C}\rightarrow\mathbb{C}^*$ (see \cite{Mun} for instance).
The argument function $\arg F$ of $F$ is defined to be the imaginary part $\mathrm{Im}\,(\log F)$ of  some branch of $\log F$.
Obviously, the argument function is unique modulo $2\pi$.

\begin{thm} \label{Imlog is bounded}
 Suppose that $F\in \mathbf{H}_\infty^2$ has no zeros in $\mathbb{D}_2^\infty$. If $F$ has a bounded argument function, then $F$ is cyclic.
\end{thm}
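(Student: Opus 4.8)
The plan is to show directly that the constant function $1$ lies in the invariant subspace $[F]$, by producing a family of bounded holomorphic multipliers that push $F$ to $1$. Since $F$ has no zeros on the simply connected domain $\mathbb{D}_2^\infty$, the branch $\log F=\log|F|+i\arg F$ is a well-defined holomorphic function, and by hypothesis $\arg F=\mathrm{Im}(\log F)$ takes values in some bounded interval $[a,b]$. Replacing $F$ by the unimodular multiple $e^{-i(a+b)/2}F$ — which changes neither $[F]$ nor the cyclicity — I may assume $|\arg F|\le M$ with $M=(b-a)/2$. Fix an integer $N>2M/\pi$ and set $G=F^{1/N}=\exp\big(\tfrac1N\log F\big)$, a nonvanishing holomorphic function on $\mathbb{D}_2^\infty$ with $\arg G=\tfrac1N\arg F$, so that $|\arg G|\le M/N<\pi/2$. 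Consequently $\mathrm{Re}\,G=|G|\cos(\arg G)>0$ and, crucially,
$$\frac{|G|}{\mathrm{Re}\,G}\le \frac{1}{\cos(M/N)}=:C_0<\infty \quad\text{on } \mathbb{D}_2^\infty.$$

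The core of the argument is this reduction to a positive-real-part function. For $\epsilon>0$ define $\phi_\epsilon=(G+\epsilon)^{-N}$. Because $\mathrm{Re}(G+\epsilon)>\epsilon>0$, the function $G+\epsilon$ is nonvanishing and $|\phi_\epsilon|\le\epsilon^{-N}$, so $\phi_\epsilon\in\mathbf{H}^\infty$. Since $F=G^N$, one computes
$$\phi_\epsilon F=\Big(\frac{G}{G+\epsilon}\Big)^N=\Big(1-\frac{\epsilon}{G+\epsilon}\Big)^N,$$
and the uniform estimate $\big|\tfrac{G}{G+\epsilon}\big|\le \tfrac{|G|}{\mathrm{Re}\,G+\epsilon}\le C_0$ gives $\|\phi_\epsilon F\|_\infty\le C_0^{\,N}$ for all $\epsilon$. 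As $\epsilon\to0^+$ the quantity $\tfrac{\epsilon}{G+\epsilon}$ tends to $0$ pointwise on $\mathbb{D}_2^\infty$, whence $\phi_\epsilon F\to 1$ pointwise while staying uniformly bounded in $\mathbf{H}_\infty^2$ (indeed in $\mathbf{H}^\infty$).

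Finally I would upgrade pointwise convergence to weak convergence and conclude. Each $\phi_\epsilon F$ lies in $[F]$ by Lemma \ref{polynomials w* dense in Hinfty}, since $\phi_\epsilon\in\mathbf{H}^\infty$ and $F\in\mathbf{H}_\infty^2$. The family $\{\phi_\epsilon F\}$ is bounded in $\mathbf{H}_\infty^2$ and satisfies $\langle\phi_\epsilon F,\mathbf{K}_\lambda\rangle=(\phi_\epsilon F)(\lambda)\to 1=\langle 1,\mathbf{K}_\lambda\rangle$ for every $\lambda\in\mathbb{D}_2^\infty$; since the reproducing kernels span a dense subspace of $\mathbf{H}_\infty^2$, a norm-bounded net converging on this dense set converges weakly, so $\phi_\epsilon F\rightharpoonup 1$. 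As $[F]$ is a closed subspace, hence weakly closed, it contains $1$, and therefore $F$ is cyclic.

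The only genuinely delicate point is extracting, from the mere boundedness of $\arg F$, a multiplier family whose products with $F$ stay \emph{uniformly bounded} rather than merely convergent: this is exactly what the $N$-th root reduction secures, since passing to $G=F^{1/N}$ forces the image into an open half-plane and yields the constant $C_0=\sec(M/N)$ controlling $|G|/\mathrm{Re}\,G$. Everything else — the holomorphy and boundedness of $\phi_\epsilon$, the membership $\phi_\epsilon F\in[F]$, and the passage from pointwise to weak convergence — is routine.
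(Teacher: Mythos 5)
Your proof is correct and follows essentially the same route as the paper's: take an $N$-th root of $F$ to force its values into the right half-plane, multiply $F$ by the bounded function $(F^{1/N}+\varepsilon)^{-N}$ to stay in $[F]$, observe the products are uniformly bounded and converge pointwise to $1$, and conclude by weak convergence. The only cosmetic difference is your uniform bound $C_0^N=\sec(M/N)^N$ where the paper gets the sharper bound $1$ from $|F^{1/N}+\varepsilon|\geq |F^{1/N}|$ on the half-plane; otherwise the argument, including the use of Lemma \ref{polynomials w* dense in Hinfty} and the kernel-based weak-convergence step, matches the paper's.
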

\begin{proof} Since $F$ has a bounded argument function $\arg F$,  one can take $N\in\mathbb{N}$ large enough so that $$\frac{|\arg F|}{N}<\frac{\pi}{2}.$$ Then there corresponds a branch of $F^{\frac{1}{N}}$ such that the arguments of values of $F^{\frac{1}{N}}$ lie in the interval $(-\frac{\pi}{2},\frac{\pi}{2})$, which forces that the real part of $F^{\frac{1}{N}}$ is always positive on $\mathbb{D}_2^\infty$. Therefore, for any $\varepsilon>0$, we have $\left|\frac{1}{F^{\frac{1}{N}}+\varepsilon}\right|<\frac{1}{\varepsilon}$,
and $$\left|\frac{F}{(F^{\frac{1}{N}}+\varepsilon)^N}\right|=\left|\frac{F^{\frac{1}{N}}}{F^{\frac{1}{N}}+\varepsilon}\right|^N\leq1$$ on $\mathbb{D}_2^\infty$.
It follows that $\frac{F}{(F^{\frac{1}{N}}+\varepsilon)^N}\ (\varepsilon>0)$ belongs to $[F]$, and it converges weakly to the constant function $1$ as $\varepsilon$ approaches $0$ from above. This completes the proof.
\end{proof}

Intuitively, the assumption that $F$ has a bounded argument function means that the image of $F$ spreads out
on connected finitely many layers of complex planes. So it is natural to represent the image of $F$ on
the Riemann surface $\mathcal{S}_{\log}=\{(z,k):z\in\mathbb{C}^*,k\in\mathbb{Z}\}$ of the logarithm. Let $\pi_{\mathbb{C}^*}$, $\pi_{\mathbb{Z}}$ denote the canonical projection from $\mathcal{S}_{\log}$ onto $\mathbb{C}^*$ and $\mathbb{Z}$, respectively. That is,
$\pi_{\mathbb{C}^*}(z,k)=z$  and $\pi_{\mathbb{Z}}(z,k)=k$ for $z\in\mathbb{C}^*$, $k\in\mathbb{Z}$.
Since
$\pi_{\mathbb{C}^*}$ is a covering,
every holomorphic function $F$ on $\mathbb{D}_2^\infty$ without zeros can be lifted to a holomorphic map $\widetilde{F}:\mathbb{D}_2^\infty\rightarrow \mathcal{S}_{\log}$ satisfying $F=\pi_{\mathbb{C}^*}\circ\widetilde{F}$.
It is routine to check that $\arg F$ is bounded if and only if $\pi_{\mathbb{Z}}(\widetilde{F}(\mathbb{D}_2^\infty))$
is a finite set. The following thus restates Theorem \ref{Imlog is bounded}.

\begin{cor} \label{RS of log} Suppose that $F\in \mathbf{H}_\infty^2$ has no zeros in $\mathbb{D}_2^\infty$. If $\pi_{\mathbb{Z}}(\widetilde{F}(\mathbb{D}_2^\infty))$ is a finite set  for a lifting  $\widetilde{F}$ of $F$ with respect to the covering $\pi_{\mathbb{C}^*}$, then $F$ is cyclic.
\end{cor}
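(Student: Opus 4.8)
The plan is to deduce this corollary directly from Theorem \ref{Imlog is bounded}, for which it suffices to translate the finiteness of $\pi_{\mathbb{Z}}(\widetilde{F}(\mathbb{D}_2^\infty))$ into the boundedness of the argument function $\arg F$. This translation is precisely the equivalence asserted in the paragraph preceding the statement, so the only real work is to verify that equivalence cleanly from the covering structure of $\mathcal{S}_{\log}$.

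First I would recall that, since $\mathbb{D}_2^\infty$ is simply connected and $F$ has no zeros, the logarithm $\log F$ is a single-valued holomorphic function on $\mathbb{D}_2^\infty$, so that we may write $\log F = \log|F| + i\,\arg F$ for a fixed continuous branch of the argument. The lifting $\widetilde{F}$ with respect to the universal covering $\pi_{\mathbb{C}^*}:\mathcal{S}_{\log}\rightarrow\mathbb{C}^*$ then has the form $\widetilde{F}(\zeta) = \big(F(\zeta),\,\pi_{\mathbb{Z}}(\widetilde{F}(\zeta))\big)$, where the integer component records which sheet of $\mathcal{S}_{\log}$ carries the value $F(\zeta)$.

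Next I would pin down the relationship between this sheet component and $\arg F$. Fixing the principal argument $\mathrm{Arg}$ on $\mathbb{C}^*$ with values in $(-\pi,\pi]$, continuity of the lifting forces
\begin{equation*}
  \arg F(\zeta) = \mathrm{Arg}(F(\zeta)) + 2\pi\,\pi_{\mathbb{Z}}(\widetilde{F}(\zeta)),\quad \zeta\in\mathbb{D}_2^\infty.
\end{equation*}
Since $|\mathrm{Arg}(F(\zeta))|\leq\pi$ is automatically bounded, this identity shows at once that $\arg F$ is bounded on $\mathbb{D}_2^\infty$ if and only if $\pi_{\mathbb{Z}}(\widetilde{F}(\zeta))$ ranges over a bounded, hence finite, set of integers. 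Thus the hypothesis that $\pi_{\mathbb{Z}}(\widetilde{F}(\mathbb{D}_2^\infty))$ is finite is exactly the hypothesis that $F$ has a bounded argument function.

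Finally, I would invoke Theorem \ref{Imlog is bounded}: as $F\in\mathbf{H}_\infty^2$ has no zeros in $\mathbb{D}_2^\infty$ and now has a bounded argument function, $F$ is cyclic. I do not expect any genuine obstacle here, since the corollary is essentially a restatement; the one point requiring care is the bookkeeping identity relating $\pi_{\mathbb{Z}}\circ\widetilde{F}$ to a branch of $\arg F$, which is a routine consequence of the fact that $\mathcal{S}_{\log}$ is the universal cover of $\mathbb{C}^*$ with deck group $\mathbb{Z}$ acting by sheet translation.
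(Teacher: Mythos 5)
Your proposal is correct and follows exactly the paper's route: the paper itself presents this corollary as a restatement of Theorem \ref{Imlog is bounded}, asserting that it is ``routine to check'' that $\arg F$ is bounded if and only if $\pi_{\mathbb{Z}}(\widetilde{F}(\mathbb{D}_2^\infty))$ is finite, which is precisely the bookkeeping identity you verify. Your identity $\arg F = \mathrm{Arg}(F) + 2\pi\,\pi_{\mathbb{Z}}\circ\widetilde{F}$ (for the branch determined by the lifting) correctly fills in the detail the paper leaves implicit.
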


We are ready to prove Theorem \ref{image outside a curve}.

\noindent\textbf{Proof of Theorem \ref{image outside a curve}. } 
Fix  liftings $\widetilde{F}$, $\tilde{\gamma}$ of  $F$ and $\gamma$  with respect to the covering $\pi_{\mathbb{C}^*}$, respectively. Without   loss of generality, we may assume $F(0)=1$ and  $\widetilde{F}(0)=(1,0)\in\mathcal{S}_{\log}$. Since $\gamma$ has a bounded argument function,
there exists a positive integer $N$ so that $|\pi_{\mathbb{Z}}(\tilde{\gamma})|<N$ on the interval $(0,1)$.
For $t\in(0,1)$, put $\tilde{\gamma}_+(t)=(z(t),k(t)+N)$ and $\tilde{\gamma}_-(t)=(z(t),k(t)-N)$, where $\tilde{\gamma}(t)=(z(t),k(t))$.
Then $\tilde{\gamma}_+$ and $\tilde{\gamma}_-$ are also liftings of ${\gamma}$, and do not intersect with each other.

Note that $\mathcal{S}_{\log}$ can be imbedded into the Riemann sphere $\mathbb{P}^1$ via the map $(z,k)\mapsto\log|z|+i(\mathrm{Arg}\ z+2k\pi)$, where $\mathrm{Arg}\ z$ denotes the principle argument of $z$. Furthermore $\tilde{\gamma}_+$ and $\tilde{\gamma}_-$ are mapped into two Jordan curves $\sigma_+$, $\sigma_-$ with both endpoints meeting at the  infinity $\infty$.  An application of the Jordan curve theorem gives that  $\sigma_+$ and $\sigma_-$ divide $\mathbb{P}^1$  into three connected components. One can see that the point $\widetilde{F}(0)$ belongs to the connected component of $\mathcal{S}_{\log}\setminus(\tilde{\gamma}_+\cup\tilde{\gamma}_-)$ enclosed by $\tilde{\gamma}_+$ and $\tilde{\gamma}_-$. Since the image of $\widetilde{F}$ is connected and do not intersect with neither $\tilde{\gamma}_+$ nor $\tilde{\gamma}_-$, it lies in the same connected component as $\widetilde{F}(0)$ does. Then we have $|\pi_{\mathbb{Z}}(\widetilde{F}(\mathbb{D}_2^\infty))|\leq2N$, which together with Corollary \ref{RS of log} completes the proof.
$\hfill \square $
\vskip2mm

\begin{Qes}
In Theorem \ref{image outside a curve}, can the assumption that $\gamma$ has a bounded argument function be dropped?
\end{Qes}

\section{The Kolzov completeness problem}

For $\theta\in(0,1]$, let $\varphi_\theta$ denote the odd $2$-periodic function on $\mathbb{R}$ defined by  $\varphi_\theta|_{(0,1)}=\chi_{(0,\theta)}$.
The Kolzov completeness problem is to  decide whether the dilation system $\mathcal{D}_\theta=\{\varphi_\theta(x), \varphi_\theta(2x),\cdots\}$ defined by $\varphi_\theta$ is complete in $L^2(0,1)$ for a given  $\theta\in(0,1]$(see\cite{Koz3} and \cite{Ni2, Ni3, Ni4}). Kozlov stated   some astonishing results without proofs on the completeness of the dilation system $\mathcal{D}_\theta$: $\mathcal{D}_\theta$ is complete for $\theta=1,\frac{1}{2},\frac{2}{3}$;
not for $\theta$ in a neighborhood of $\frac{1}{3}$ or $\theta=\frac{q}{p}$, where $p$ is an odd prime and $q$ is odd so that $\tan^2\frac{q\pi}{2p}<\frac{1}{p}$.
An earlier result of Akhiezer implies that in the case $\theta=1$, the corresponding system is complete \cite{Akh}. Until about 2015  Nikolski started to promote the  Kozlov problem.
He proved  all Kozlov's claims:  the corresponding system is complete for $\theta= 1, \frac{1}{2},\,\frac{2}{3}$, and incomplete for $\theta =\frac{1}{3},\, \frac{1}{4}$, and $\theta = \frac{q}{p}$  with $\sin^2\frac{\pi q}{2p}<\frac{1}{p+1}$ and in their sufficiently small  neighborhoods \cite{Ni4}.

\vskip1.5mm
 Let  $\psi(x)=\{x\}-\frac{1}{2}\ (x\in\mathbb{R})$,  where $\{x\}$ denotes the fraction part of $x$, and let  $p_1=2,p_2=3,\cdots$ be the sequence of consecutive prime numbers.
From the equality  $\psi(x)=\sum_{n=1}^{\infty}-\frac{1}{n\pi}\sin n\pi x$, we apply the Bohr transform in $\S2.1$ to $\psi$ to obtain the following
$$ \mathbf{B}U\psi=-\sum_{n=1}^{\infty}\frac{1}{\sqrt{2}n\pi}\zeta^{\alpha(n)}
=-\frac{1}{\sqrt{2}\pi}\mathbf{K}_{\mathbf{p}},$$
where $\mathbf{p}=(\frac{1}{p_1},\frac{1}{p_2},\cdots)$.
It is easy to verify that the following equalities hold.  $$\varphi_1(x)=2\psi(2x)-4\psi(x),$$
$$\varphi_{\frac{1}{2}}(x)=\psi(4x)-2\psi(x)-\psi(2x),$$
$$\varphi_{\frac{1}{3}}(x)=\psi(6x)-\psi(x)-\psi(2x)-\psi(3x),$$ and
$$\varphi_{\frac{2}{3}}(x)=\psi(3x)-3\psi(x).$$
Setting $F_\theta=\mathbf{B}U\varphi_\theta$, then  we have $$F_1=-\frac{2}{\sqrt{2}\pi}(\zeta_1-2)\mathbf{K}_{\mathbf{p}},$$
$$F_{\frac{1}{2}}=-\frac{1}{\sqrt{2}\pi}(\zeta_1^2-\zeta_1-2)\mathbf{K}_{\mathbf{p}}=(\zeta_1+1)(\zeta_1-2)\mathbf{K}_{\mathbf{p}},$$
$$F_{\frac{1}{3}}=-\frac{1}{\sqrt{2}\pi}(\zeta_1\zeta_2-\zeta_1-\zeta_2-1)\mathbf{K}_{\mathbf{p}},$$ and
$$F_{\frac{2}{3}}=-\frac{1}{\sqrt{2}\pi}(\zeta_2-3)\mathbf{K}_{\mathbf{p}}.$$
Since the completeness of $\mathcal{D}_\theta$ is equivalent to the cyclicity of  $F_{\theta}$ in $\mathbf{H}_\infty^2$, it follows immediately from Corollary \ref{Hp multiply reproducing kernel} that the dilation systems $\mathcal{D}_\theta$ are complete for $\theta=1,\frac{1}{2},\frac{2}{3}$.
It is easy to see  that the function $$F_{\frac{1}{3}}=-\frac{1}{\sqrt{2}\pi}(\zeta_1\zeta_2-\zeta_1-\zeta_2-1)\mathbf{K}_{\mathbf{p}}$$ has a zero $(-\frac{1}{2},-\frac{1}{3},0,0,\cdots)\in\mathbb{D}_2^\infty$, and therefore it  is not cyclic. This means that $\mathcal{D}_{\frac{1}{3}}$ is not  complete.

We observe when $\theta=1,\frac{1}{2}, \frac{1}{3}, \frac{2}{3}$, $F_\theta$ has the form
$$F_\theta=P\, \mathbf{K}_{\mathbf{p}},$$
where $P$ are  polynomials. Therefore,  it is directly related to the  Kozlov problem  whether $F_\theta$ has such a decomposition for a general $\theta\in(0,1].$

For a general $\theta\in(0,1],$  a direct calculation yields
\begin{equation} \label{Ftheta}
  F_{\theta}=\frac{\sqrt{2}}{\pi}\sum_{n=1}^\infty\frac{1-\cos n\theta\pi}{n}\zeta^{\alpha(n)}
\end{equation}
Also note that $\mathbf{K}_{\mathbf{p}}^{-1}=\sum_{n=1}^{\infty}\frac{\mu(n)}{n}\zeta^{\alpha(n)}$,
where $\mu(n)$ is the M\"{o}bius function (see \cite[pp. 29]{HLS}).
Using the fact that $\sum_{k|n}\mu(k)=0\ (n\geq2)$ \cite[Theorem 2.1]{Apo}, we have
\begin{equation}\label{Gtheta}
\begin{split}
   F_\theta\cdot\mathbf{K}_{\mathbf{p}}^{-1} & =\frac{\sqrt{2}}{\pi}\sum_{n=1}^\infty\frac{1}{n}\left(\sum_{k|n}\mu(k)(1-\cos \frac{n}{k}\theta\pi)\right)\zeta^{\alpha(n)} \\
     & =\frac{\sqrt{2}}{\pi}[1-\cos\theta\pi-\sum_{n=2}^\infty\frac{1}{n}\left(\sum_{k|n}\mu(k)\cos \frac{n}{k}\theta\pi\right)\zeta^{\alpha(n)}].
\end{split}
 \end{equation}
 It is clear that $$|\sum_{k|n}\mu(k)\cos \frac{n}{k}\theta\pi|\leq d(n),$$ where $d(n)$ is the number of divisors of $n$. This together with
 $d(n)=O(n^\varepsilon)$ for any $\varepsilon>0$ (see \cite[pp. 296]{Apo} for instance) implies that $G_\theta=F_\theta\cdot\mathbf{K}_{\mathbf{p}}^{-1}$ has $p$-summable coefficients for each $1<p\leq2$. Since the infinite tours $\mathbb{T}^\infty$ is the dual group of the group of finitely supported sequences of integers, it follows from the Hausdorff-Young inequality that $$G_\theta\in\bigcap_{2\leq q<\infty}\mathbf{H}_\infty^q.$$  From the identity
 $$F_\theta=G_\theta \mathbf{K}_{\mathbf{p}},$$
the function $G_\theta$ relates directly to the  Kolzov problem  since by Corollary \ref{Hp multiply reproducing kernel},  the cyclicity of $F_\theta$ and $G_\theta$ are coincident.
We have shown previously that if $\theta\in\{1,\frac{1}{2},\frac{1}{3},\frac{2}{3}\}$, then
$G_\theta$ only depends on finitely many variables, and in fact, $G_\theta$ are  actually  polynomials.
The following proposition shows that  the points  $\{1,\frac{1}{2},\frac{1}{3},\frac{2}{3}\}$  are  all possible points such that  $G_\theta$ are polynomials.

\begin{prop} \label{Gtheta infinite many variables}
If $\theta\notin\{1,\frac{1}{2},\frac{1}{3},\frac{2}{3}\}$, then  $G_{\theta}$
depends on  infinitely many variables.
In particular, $\varphi_{\theta}(x)$ cannot be expressed as a linear combination of finitely many functions in  $\{\psi(x),\psi(2x),\cdots\}$.
\end{prop}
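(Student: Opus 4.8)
The plan is to read the coefficients of $G_\theta$ directly off \eqref{Gtheta} and reduce the statement to one about primes. Writing $G_\theta=\frac{\sqrt2}{\pi}\sum_{n\ge1}c_n\,\zeta^{\alpha(n)}$, we have $c_1=1-\cos\theta\pi$ and, for $n\ge2$,
\[
 c_n=-\frac1n\sum_{k\mid n}\mu(k)\cos\tfrac nk\theta\pi .
\]
Since the monomial $\zeta^{\alpha(p_j)}$ is precisely $\zeta_j$, the function $G_\theta$ depends on the variable $\zeta_j$ whenever $c_{p_j}\neq0$; hence it suffices to produce infinitely many primes $p$ with $c_p\neq0$. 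For $n=p$ prime only $k=1$ and $k=p$ survive, so $c_p=-\frac1p\bigl(\cos(p\theta\pi)-\cos\theta\pi\bigr)$, and therefore
\[
 c_p\neq0\quad\Longleftrightarrow\quad \cos(p\theta\pi)\neq\cos\theta\pi .
\]

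First I would settle the irrational case. If $\theta$ is irrational, then $\cos(p\theta\pi)=\cos\theta\pi$ forces $(p-1)\theta\in2\mathbb{Z}$ or $(p+1)\theta\in2\mathbb{Z}$; as $p\pm1\neq0$, both are impossible, so $c_p\neq0$ for every prime and $G_\theta$ already depends on all the variables. For rational $\theta=a/b$ in lowest terms I would rewrite $\cos(p\theta\pi)=\cos\theta\pi$ as the divisibility condition $a(p-1)\equiv0$ or $a(p+1)\equiv0\pmod{2b}$. Putting $M=2b/\gcd(a,2b)$ and dividing through by $\gcd(a,2b)$ (which is coprime to $M$), this collapses to $p\equiv1$ or $p\equiv-1\pmod M$. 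A short parity analysis then gives $M=2b$ when $a$ is odd and $M=b$ when $a$ is even, and shows that $M\in\{1,2,3,4,6\}$ occurs exactly for $\theta\in\{1,\tfrac12,\tfrac13,\tfrac23\}$. When $\theta$ avoids these four values, $M\notin\{1,2,3,4,6\}$, so the unit group $(\mathbb{Z}/M\mathbb{Z})^\times$ has more than two elements and therefore contains a residue $r\not\equiv\pm1$; by Dirichlet's theorem on primes in arithmetic progressions there are infinitely many primes $p\equiv r\pmod M$, each with $c_p\neq0$. In either case $c_{p_j}\neq0$ for infinitely many $j$, so $G_\theta$ depends on infinitely many variables. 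The main obstacle is precisely this arithmetic bookkeeping: verifying that the degenerate moduli, namely those with $(\mathbb{Z}/M\mathbb{Z})^\times=\{\pm1\}$, match up exactly with the four exceptional $\theta$; once the match is established, Dirichlet closes the rational case and the irrational case is immediate.

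For the final assertion I would argue by contraposition. A direct computation with the Bohr transform of $\S2.1$ shows that dilation acts by multiplication by a monomial,
\[
 \mathbf{B}U\psi(kx)=-\tfrac1{\sqrt2\pi}\,\zeta^{\alpha(k)}\mathbf{K}_{\mathbf{p}},\qquad k\in\mathbb{N}.
\]
Hence any finite linear combination $\sum_{k}c_k\psi(kx)$ is sent to $P\,\mathbf{K}_{\mathbf{p}}$ with $P=-\frac1{\sqrt2\pi}\sum_k c_k\zeta^{\alpha(k)}$ a polynomial. If $\varphi_\theta$ were such a combination, then $F_\theta=\mathbf{B}U\varphi_\theta=P\,\mathbf{K}_{\mathbf{p}}$ would give $G_\theta=F_\theta\mathbf{K}_{\mathbf{p}}^{-1}=P$, a polynomial depending on only finitely many variables, contradicting what was just proved. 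Therefore $\varphi_\theta$ cannot be written as a finite linear combination of the $\psi(kx)$.
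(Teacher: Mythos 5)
Your proposal is correct, but it takes a genuinely different route from the paper. You work directly with the prime-indexed coefficients of $G_\theta$: from (\ref{Gtheta}) you get $c_p=-\frac1p\bigl(\cos(p\theta\pi)-\cos\theta\pi\bigr)$, so dependence on $\zeta_j$ is witnessed by $c_{p_j}\neq0$, and the problem becomes finding infinitely many primes $p$ with $p\not\equiv\pm1\pmod M$ for the modulus $M=2b/\gcd(a,2b)$; your identification of the degenerate moduli $M\in\{1,2,3,4,6\}$ with the four exceptional $\theta$ checks out, and Dirichlet's theorem on primes in arithmetic progressions finishes the rational case, while the irrational case is immediate (every prime works, which is exactly the paper's Remark 8.2). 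The paper instead argues by contradiction: assuming $G_\theta$ depends only on $\zeta_1,\dots,\zeta_m$, it pairs $F_\theta$ against monomials $\zeta^{\alpha(n)}$ for $n$ in the multiplicative semigroup generated by the primes $p_{m+1},p_{m+2},\dots$ to force $\cos n\theta\pi=\cos\theta\pi$ on that whole semigroup, and then constructs two explicit Euclid-style elements $n_1,n_2$ of the semigroup to squeeze the denominator of $\theta$ down to $t\leq3$. What your approach buys is transparency and a stronger conclusion (you exhibit infinitely many \emph{single} variables $\zeta_j$ on which $G_\theta$ genuinely depends, using only prime coefficients); what it costs is the appeal to Dirichlet's theorem, which the paper's elementary construction avoids entirely. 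Your deduction of the final assertion, via $\mathbf{B}U\psi(kx)=-\frac{1}{\sqrt2\pi}\zeta^{\alpha(k)}\mathbf{K}_{\mathbf{p}}$ so that a finite combination would make $G_\theta$ a polynomial, is the intended argument and is left implicit in the paper.
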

\begin{proof} To reach a contradiction, suppose that
$G_{\theta}$ is a function in variables $\zeta_1,\cdots,\zeta_m\ (m\in\mathbb{N})$.
Let  $\mathcal{N}$ denote the subsemigroup of $\mathbb{N}$ generated by $\{1,p_{m+1},p_{m+2},\cdots\}$.
Note that
$$\mathbf{K}_{\mathbf{p}}=\prod_{n=1}^{\infty}\frac{1}{1-\frac{1}{p_n}\zeta_n}
=\prod_{i=1}^{m}\frac{1}{1-\frac{1}{p_i}\zeta_i}\cdot\prod_{j=m+1}^{\infty}\frac{1}{1-\frac{1}{p_j}\zeta_j}.$$
Then by (\ref{Ftheta}), for $n\in\mathcal{N}$,
\begin{equation*}
  \begin{split}
     \frac{\sqrt{2}(1-\cos n\theta\pi)}{n\pi} & =\langle F_{\theta},\zeta^{\alpha(n)}\rangle \\
       & =\langle G_{\theta}\cdot\mathbf{K}_{\mathbf{p}},\zeta^{\alpha(n)}\rangle \\
       & =\langle G_{\theta}\cdot\prod_{i=1}^{m}\frac{1}{1-\frac{1}{p_i}\zeta_i},1\rangle\langle \prod_{j=m+1}^{\infty}\frac{1}{1-\frac{1}{p_j}\zeta_j},\zeta^{\alpha(n)}\rangle \\
       & =\frac{G_{\theta}(0)}{n},
  \end{split}
\end{equation*}
which gives $\cos n\theta\pi=\cos \theta\pi$ for all $n\in\mathcal{N}$. Equivalently,
either $(n+1)\theta\pi$ or $(n-1)\theta\pi$ is an integer multiple of $2\pi$. In particular,
$\theta$ is rational. Put $\theta=\frac{s}{t}$ where $s,t$ are coprime positive integers.
It is clear that either $t|\frac{n+1}{2}$ or $t|\frac{n-1}{2}$ for all $n\in\mathcal{N}$. Without loss of generality, we may assume that $m\geq4$, and note each  prime factor of $t$ is not larger than $p_m$. Set $$n_1=3+\prod_{\substack{1\leq k\leq p_m \\ 3\nmid k}}k,\quad n_2=5+\prod_{\substack{1\leq k\leq p_m \\ 5\nmid k}}k.$$
Then for each $i=1,2$, we have $n_i\in\mathcal{N}$, and thus each factor of $t$ divides  either $\frac{n_i+1}{2}$ or $\frac{n_i-1}{2}$.  Since $n_2\equiv5(\mathrm{mod}\ 72)$, and for any $3\leq j\leq m$,  $n_1\equiv3(\mathrm{mod}\ 2p_j)$,
we see that
$$\frac{n_1+1}{2}\equiv2(\mathrm{mod}\ p_j), \quad \frac{n_1-1}{2}\equiv1(\mathrm{mod}\ p_j),$$ and $$\frac{n_2+1}{2}\equiv3(\mathrm{mod}\ 36), \quad
 \frac{n_2-1}{2}\equiv2(\mathrm{mod}\ 36).$$
It follows that $$p_j\nmid\frac{n_1+1}{2},\quad p_j\nmid\frac{n_1-1}{2}$$ for any $3\leq j\leq m$,
and $$4,6,9\nmid\frac{n_2+1}{2},\quad  4,6,9\nmid\frac{n_2-1}{2}.$$
This means that  each of $4,6,9, p_3,p_4,\cdots,p_m$ cannot  divide $t$, forcing $1\leq t\leq 3$. This contradicts with the assumption that $\theta\notin\{1,\frac{1}{2},\frac{1}{3},\frac{2}{3}\}$, and then  we complete the proof.
\end{proof}

\begin{rem}
  If $\theta$ is irrational, then for any prime number $p$,
  $$\sum_{k|p}\mu(k)\cos \frac{p}{k}\theta\pi=\cos p\theta\pi-\cos\theta\pi\neq0,$$
  and therefore $G_\theta$ depends on every variable by (\ref{Gtheta}).
\end{rem}

By Proposition 8.1, we conjecture that the dilation system $\mathcal{D}_\theta$ is complete only for $\theta\in\{1,\frac{1}{2},\frac{2}{3}\}$.

\vskip2mm
\noindent \textbf{Acknowledgement} This work is  partially supported by
 Natural Science Foundation of China.

\vskip3mm \noindent{Hui Dan, School of Mathematical Sciences, Fudan
University, Shanghai, 200433, China,
   E-mail:  hdan@fudan.edu.cn

\noindent Kunyu Guo, School of Mathematical Sciences, Fudan
University, Shanghai, 200433, China, E-mail: kyguo@fudan.edu.cn

\end{document}